\documentclass[a4paper, 10pt, english, reqno]{amsart}

\usepackage[utf8]{inputenc}
\usepackage{fullpage}
\usepackage{graphicx}

\usepackage{amsmath, amssymb, amsfonts, amsthm, mathtools}
\usepackage{mathrsfs}

\usepackage{xcolor}
\usepackage{algorithm}
\usepackage{algpseudocode}
\usepackage{subcaption}
\usepackage{float}
\usepackage{thmtools}
\usepackage{empheq}

\usepackage{hyperref}
\usepackage{cleveref}

\DeclareMathOperator*{\Law}{\mathrm{Law}}
\DeclareMathOperator*{\Tr}{\mathrm{Tr}}

\def\R{\mathbb R}
\def\N{\mathbb N}

\def\d{\mathrm d}
\def\tcr{\textcolor{red}}

\newtheorem{theorem}{Theorem}[section]
\newtheorem{lemma}[theorem]{Lemma}

\newtheorem{definition}[theorem]{Definition}
\newtheorem{proposition}[theorem]{Proposition}
\newtheorem{remark}[theorem]{Remark}

\newcounter{Acond}
\renewcommand{\theAcond}{A\arabic{Acond}}
\crefname{Acond}{Assumption}{Assumptions}

\title{\textbf{Policy Gradient for Continuous-Time Mean-Field Control}}
\date{}

\author[E. Bayraktar]{Erhan Bayraktar\textsuperscript{1}}
\thanks{\textsuperscript{1} Department of Mathematics, University of Michigan, Ann Arbor, MI, USA}

\author[M. Hernandez]{Martin Hernandez\textsuperscript{2,*}}
\thanks{\textsuperscript{2} Department of Statistics and Data Science, University of California, Los Angeles, CA, USA}
\author[Q. Yan]{Qinxin Yan\textsuperscript{3}}
\thanks{\textsuperscript{3} Program in Applied and Computational Mathematics, Princeton University, Princeton, NJ, USA}

\author[Y. Zhu]{Yuhua Zhu\textsuperscript{2}}

\thanks{\textsuperscript{*} Corresponding author (\texttt{martinh@ucla.edu}).}

\date{}

\begin{document}
\maketitle

\begin{abstract}
This paper develops a policy gradient method for entropy-regularized mean-field control in the discounted infinite-horizon setting. We consider randomized feedback policies and a coupled representative-particle/population system, in which the representative state evolves jointly with a population law governed by a McKean--Vlasov equation. The resulting value function is therefore defined on the product space $\mathbb R^d \times \mathcal P_2(\mathbb R^d)$.

A key distinction from existing policy gradient methods for mean-field control is that, after computing the value function under a fixed policy, our approach does not require solving an additional equation to obtain the policy gradient. Instead, we derive an explicit policy gradient formula directly in terms of the value function. The formulation is based on an instantaneous advantage function, which quantifies the gain of taking a given action relative to the current randomized policy.  We establish a G\^ateaux policy-gradient formula, which gives the first-order variation of the objective along arbitrary policy perturbations, and then derive the corresponding ascent direction under finite-dimensional policy parametrization.

The resulting formula leads to a model-based actor--critic scheme. The critic is obtained by solving the associated linear stationary Hamilton--Jacobi--Bellman equation for the value function, using cylindrical functions to represent dependence on the population law. The actor is then updated according to the derived policy-gradient formula. We further analyze the well-posedness of the PDE in a polynomial-growth function class. Finally, we illustrate the proposed method through numerical experiments on an LQR model and a crowd-motion problem.
\end{abstract}

\tableofcontents

\section{Introduction}

Many control problems involve a large population of identical agents whose dynamics interact weakly, and only through the empirical distribution of the population. Such models arise naturally in applications including opinion formation \cite{T, hekr02}, biological swarming and flocking \cite{CS, cafotove10}, crowd dynamics \cite{CCH13}, and financial systemic risk \cite{CFS}. See also \cite{carmona2018probabilistic1, carmona2018probabilistic2, benbook} for  general background. When the number of agents $N$ is large, directly optimizing the full interacting $N$-particle system becomes prohibitively difficult, both computationally and analytically. The mean-field viewpoint then replaces the full microscopic system by a limiting problem in which a representative agent interacts with the distribution of the population. This limit is closely related to the theory of interacting particle systems, propagation of chaos, and McKean--Vlasov (MKV) stochastic differential equations, where under suitable assumptions, the empirical measure of the $N$-particle system converges to a measure-valued limit, and the state of a representative particle is then described by a stochastic differential equation whose coefficients depend on its own law; see, for instance, \cite{carmona2018probabilistic1,  Lacker}.

From this large-population perspective, two different limiting formulations emerge. If one starts from a \emph{central} control problem for the full population, or equivalently takes the mean-field limit of a socially optimal large-agent control problem, one is led to \emph{mean-field control} (MFC), which is also known as  McKean--Vlasov optimal control. If instead each agent optimizes an individual objective against the population and one
then passes to the limit, one obtains a \emph{mean-field game} (MFG), whose solution concept is Nash equilibrium. In this sense, MFC and MFG should be viewed as two distinct asymptotic regimes of large-population systems of exchangeable interacting agents. The key difference lies in whether the
 optimization is collective or strategic, and in the order in which optimization and passage to the limit are performed; see \cite{CarmDelLach, Lacker, lali07, carbook, benbook}. The present paper is concerned exclusively with the control formulation, namely MFC.

\smallbreak

A distinctive feature of the present framework is the formulation of 
the controlled dynamics through a \emph{decoupled system}: a 
MKV equation governing the evolution of the population law, 
coupled with an independent stochastic differential equation for a 
representative agent \cite{BuckdahnLiPengRainer2017, 
BayraktarCossoPham2018}. In this formulation, the population flow 
$(\mu_t)_{t \ge 0}$ is generated by the MKV equation and is 
deterministic, while the representative agent evolves according to its 
own SDE driven by an independent Brownian motion. A key advantage of 
this decoupled structure is that it allows the value function to be 
defined not only at the level of the population distribution, as is 
standard in the Wasserstein-space formulation \cite{PhamWei}, but also 
for any given initial position of the representative agent. The value 
function therefore takes as arguments both an initial state 
$s \in \mathbb{R}^d$ for the representative agent and an initial 
population law $\mu \in \mathcal{P}_2(\mathbb{R}^d)$, a joint 
dependence that plays a central role in the derivation of the policy 
gradient and in the design of the actor-critic algorithm.

\smallbreak

We work in the infinite-horizon discounted setting with entropy regularization and randomized feedback policies. This choice is natural for several reasons. First, entropy regularization promotes exploration and smooths the optimization over policies, a feature that is central in continuous-time reinforcement learning and remains analytically useful even
when the controlled dynamics are known \cite{wang2020, jia2022, jia2023}.
Second, the temperature parameter $\lambda>0$ connects the regularized problem to the classical deterministic formulation and yields a family of problems with better structural properties. Third, in the linear-quadratic setting, entropy regularization leads to explicit Gaussian optimal policies, which provide a tractable benchmark for the general theory. 

\smallbreak

The goal of this paper is to develop a model-based actor--critic method for mean-field control problems. Actor--critic methods are a standard class of algorithms in reinforcement learning, combining a critic, which evaluates the current policy, with an actor, which updates the policy \cite[Chapter 13]{SuttonBarto2018}. In our setting, for a fixed randomized feedback policy, the critic evaluates its performance through a HJB equation. The actor then improves the policy within a parametrized class, using a policy gradient formula for the performance functional, which is derived from a policy gradient theorem. We work in the model-based setting, that is, the dynamics are known. This allows us to focus
on the analytic structure of the problem and provides a foundation for
future data-driven extensions in \cite{secondpaper}.

\smallbreak

A key difficulty is the structure of the policy gradient. In classical single-agent control, the gradient of the performance functional with respect to the policy parameters depends only on the state process and the corresponding advantage function. In the mean-field setting, by contrast, the performance functional depends on the policy through both the representative dynamics and  the population flow induced by the same policy. As a result, the policy gradient theorem provided in this work decomposes into two structurally distinct contributions: a \emph{representative term}, capturing the direct effect of the policy through the state dynamics, and a \emph{population term}, capturing the indirect effect through the dependence of the value function on the population law. This decomposition has no counterpart in the standard single-agent theory and is one of the main contributions of the present paper.

\smallbreak

\paragraph{Contributions.}

\begin{enumerate}

  \item \textbf{Policy gradient theorem.}
  We derive a rigorous policy gradient theorem for the coupled
  representative agent-population system in the infinite-horizon setting. The
  resulting formula separates the representative and population
  contributions to the gradient of the performance functional,
  expressed in terms of two advantage functions: the representative
  advantage function, which extends the classical single-agent
  structure, and the population advantage function, which is specific
  to the mean-field framework and involves the derivative of the
  value function on probability measure space. We also establish a parametric version of this
  formula for families of randomized policies, providing a rigorous
  basis for gradient-based policy improvement.

 \item \textbf{Policy evaluation and infinite-horizon mean-field LQR.}
  We analyze the linear stationary Hamilton--Jacobi--Bellman equation on
  $\mathbb R^d \times \mathcal P_2(\mathbb R^d)$ arising in the policy
  evaluation step and establish classical well-posedness results. We also study
  the infinite-horizon entropy-regularized mean-field LQR problem,
  deriving an explicit characterization of the optimal value function
  and the associated optimal Gaussian policy. This LQR analysis provides
  both an explicit benchmark for the general theory and a natural
  reference setting for the numerical experiments.

  \item \textbf{Actor-critic algorithm and numerical experiments.}
  Based on the theoretical analysis, we introduce a finite-dimensional
  actor-critic algorithm combining a Galerkin approximation of the
  stationary HJB equation for policy evaluation with the policy
  gradient formula for policy improvement. Basis functions are
  constructed using cylindrical functions of the population law, for
  which the Lions derivatives entering the HJB operator can be
  evaluated in closed form. Numerical experiments on the mean-field
  systemic risk model and on a nonlinear two-dimensional crowd-aversion
  problem corroborate the theoretical results and demonstrate the
  practical effectiveness of the proposed method.

\end{enumerate}

\smallbreak
\noindent
\textbf{Related work.}

\smallbreak
\noindent

\textbf{Mean-field limits, propagation of chaos, and McKean--Vlasov
control.}
The mean-field description of large populations of exchangeable
interacting particles is classically tied to propagation of chaos and
McKean--Vlasov (MKV) dynamics; see, for instance,
\cite{SznitmanChaos,BayraktarWuZhangFBSDE,ChaintronDiez2022,
carmona2018probabilistic1}. In the uncontrolled setting, the connection
between MKV SDEs and associated PDEs was developed in
\cite{BuckdahnLiPengRainer2017}. In the controlled setting, rigorous
limit theory linking finite-agent control problems with
MKV control was established in \cite{Lacker} and further
developed in \cite{DjetePossamaiTan2022}. The distinction between the
cooperative mean-field control limit and the strategic mean-field game
limit was clarified in \cite{CarmDelLach}; see also
\cite{lali07,carbook,benbook,carmona2018probabilistic2} for general
background.

The theoretical foundations of MKV optimal control are well
developed, particularly in finite time horizon. The stochastic
Pontryagin maximum principle was derived in \cite{CarmDel15}; see also
\cite{Bonnet2017ThePM,anddje10,bucetal11} for dynamics depending on
moments of the law. The dynamic programming approach, leading to HJB
equations on the Wasserstein space, was initiated in
\cite{PhamWei,LaurPironn14}. Feynman--Kac representations for HJB
equations on Wasserstein spaces were obtained in
\cite{BayraktarCossoPham2018,chassagneux2019}. Viscosity-solution
results for MKV control were first studied in
\cite{GangboHamilton2008} for a problem from classical mechanics, and
were later developed through the Lions lifting in
\cite{PhamWei,PhamWei2018}. Related viscosity theories for equations on
the Wasserstein space include the Eikonal equation
\cite{SonerYanTorus,SonerYanEikonal}, comparison results for
second-order PDEs \cite{BayraktarEkrenZhangCPDE2025,
BayraktarEkrenHeZhangJDE2026}, and a fully second-order HJB theory
\cite{BayraktarEkrenCheungQiuTaiZhangSICON}. We also refer to
\cite{BurzoniIgnazioReppenSoner2020} for viscosity solutions in the
setting of controlled MKV jump-diffusions. Infinite-horizon
problems on Wasserstein spaces have been studied, for instance, in
\cite{HyndKil2015}, although most of the above theory is formulated in
finite time horizon.

Our analysis is complementary to this literature. Motivated by policy iteration (see
\cite{Howard1960,SuttonBarto2018}), we
focus on the linear stationary HJB equation associated with a fixed
randomized policy. While most of the above works study
HJB equations on $\mathcal P_2(\mathbb R^d)$, our coupled system leads to a linear stationary HJB on $\mathbb R^d\times\mathcal P_2(\mathbb R^d)$, for which we establish existence and uniqueness of
classical solutions.

\smallbreak

\noindent
\textbf{Numerical methods for mean-field control and policy gradient.} A broad range of numerical methods has been developed for mean-field
control problems with known dynamics. PDE-based and augmented-Lagrangian
methods have been proposed for mean-field type control problems with
congestion effects \cite{AchdouLauriereCongestionI,AchdouLauriereCongestionII},
while MKV control problems have also been approached through
numerical schemes for the associated forward-backward stochastic
differential equations \cite{chassagneux2019}. We refer
to \cite{lauriere2021} for a comprehensive survey on FBSDE for MKV, and dynamic programming based methods, including particle
methods, and PDE-based discretizations. Related particle
approximations for second-order PDEs on the Wasserstein space have been
studied in \cite{BayraktarEkrenZhangParticleSICON2025}. In high-dimensional
regimes, these methods are complemented by a growing literature based on
deep learning and neural-network parametrizations; see, for instance,
\cite{ruthotto2020,Carmona2022,soner2025learning}.

A related approximation paradigm, in the context of actor-critic, is linear value-function approximation,
which is classical in reinforcement learning and is widely used to make
policy evaluation and policy iteration computationally feasible in large
or continuous state spaces; see, for instance,
\cite{Szepesvari2010Algorithms,JinYangWangJordan2020,
AyoubJiaSzepesvariWangYang2020,YinHaoAbbasiYadkoriLazicSzepesvari2022}.
In the mean-field-control setting, \cite{BayraktarKara2025LFA} develops
learning algorithms with linear function approximation for finite-state,
finite-action MFC problems and proves quantitative performance bounds.  In the linear-quadratic
setting, convergence and error analyses of policy-gradient methods have
been studied in both discrete and continuous time
\cite{carmona2019linear, wang2021global, frikha2024full}. 
The present paper is complementary to these works. Methods developed for
discrete-time MDPs or mean-field MDPs do not transfer directly to the
continuous-time diffusion setting. Indeed, the conventional
state-action value function collapses to the value function as the
sampling interval tends to zero, so that its action dependence survives
only at the infinitesimal level through an instantaneous advantage
rate; see \cite{tallec2019, jia_qlearning2023}. This motivates formulations of actor-critic methods that are
intrinsic to the continuous-time dynamics. 

We would like to mention two lines of work that is most related to this paper (also see Remarks \ref{rmk:adv}, \ref{rmk:key diff} and \ref{rmk:para-gd} for more detailed discussion). \cite{frikhaActorCritic} derived a policy gradient formula for MFC in
finite horizon by differentiating the HJB characterization of the
value function with respect to the policy parameter, and
more recently, they generalized this algorithm to moment neural network approximations \cite{pham2025actor}.  
Our work is closest in spirit to the above two works. Differentiating the HJB provides a
natural route for parametric actor--critic methods, but it also requires
solving an additional sensitivity equation, or equivalently an additional
critic-type problem, to compute the ascent direction.
In contrast, our approach derives an explicit policy-gradient formula
directly in terms of the value function. This avoids the need for a separate
sensitivity equation and leads to a simpler actor update. In addition, our
formulation is first developed at the level of randomized feedback policies
themselves. We characterize the G\^ateaux policy gradient, namely the
first-order variation of the objective along arbitrary admissible policy
perturbations, before specializing it to finite-dimensional policy
parametrizations. This gives a parameterization-free description of policy
improvement and then yields an explicit ascent direction for parametrized
policies. Finally, the policy gradient formulation is expressed through two instantaneous advantage functions, one for the representative-agent dynamics and one for the population-law dynamics. These advantages are defined from the value function and the infinitesimal generators, and do not require derivatives of the drift or diffusion with respect to the policy parameters. This form is closer to the classical policy-gradient structure in reinforcement learning, where the policy gradient is expressed in terms of an advantage function.

On the other hand, \cite{Wei2023ContinuousTQ} also derives a related instantaneous advantage q-function for mean-field control. Their formulation is population-based,
whereas ours is built on the coupled representative-agent/population system.
More importantly, the advantage function plays a different role. \cite{Wei2023ContinuousTQ} uses the instantaneous
advantage to construct the integrated $q$-functions for all test policies in a neighbourhood of the target policy. Then the martingale property of the integrated q-functions is used to solve the MFC. {This integrated $q$-function approach has been further developed in \cite{WeiYuYuanUnified} for MFG and MFC problems, including jump-diffusion dynamics. More recent works extend this continuous-time $q$-learning framework to MFC with controlled common noise \cite{RenWeiYuZhouCommonNoiseI,RenWeiYuZhouCommonNoiseII}. In that setting, the conditional population law is stochastic and the policy-improvement step becomes more implicit, leading to fixed-point characterizations of the optimal policy. In contrast with these papers,} we use the pointwise-in-action $q$-function directly
to derive the policy-gradient formula, yielding an explicit actor update once
$V^\pi$ and its derivatives are available. Thus, our actor--critic pipeline
does not require an auxiliary integrated $q$-function or testing over a
neighborhood of policies.


\smallbreak

\noindent
\textbf{Mean-field linear-quadratic control.}
The linear-quadratic structure has played a central role in mean-field
control theory. In the finite-horizon setting, Riccati-based
characterizations for mean-field stochastic linear-quadratic control
were developed in \cite{Yong2013MFSLQ}, while the infinite-horizon
problem with deterministic coefficients was studied in
\cite{HuangLQRmeanfield2015}. Subsequent works investigated the
structural solvability of mean-field linear-quadratic problems,
including closed-loop solvability \cite{LiSunYong2016MFCL}, open-loop
solvability \cite{Sun2017MFOpenLoop}, and weak closed-loop solvability
\cite{SunWang2021MFWeakCL}. Extensions to conditional
McKean--Vlasov dynamics with random coefficients and common noise were
obtained in \cite{Pham2016LQCMKV,BaseiPham2019WeakMartingale}.  \cite{frikhaActorCritic}  analyze the entropy regularized finite-time mean-field LQR. More
recently, long-time asymptotic properties, including turnpike behavior,
have been studied in \cite{BayraktarJianTurnpike,
BayraktarJianCommonNoiseTurnpike}. We complement this literature by
analyzing an entropy-regularized infinite-horizon mean-field LQR problem
with stationary randomized policies, for which the optimal Gaussian
policy and the optimal value function are characterized explicitly
through a coupled algebraic Riccati system.

\smallbreak

\section{Problem Formulation}\label{sec:formulation}

Let $d,n,m\in\mathbb N$. The state space is $\mathbb R^d$, the action
space is a Borel set $\mathcal A\subseteq \mathbb R^m$, and the
population state is described by a measure in
$\mathcal P_2(\mathbb R^d)$, the set of Borel probability measures on
$\mathbb R^d$ with finite second moment. We equip
$\mathcal P_2(\mathbb R^d)$ with the $2$-Wasserstein distance
\[
  \mathcal W_2(\mu,\varrho)
  :=
  \inf_{\gamma\in\Gamma(\mu,\varrho)}
  \left(
    \int_{\mathbb R^d\times\mathbb R^d}
    |x-y|^2\,\gamma(dx,dy)
  \right)^{1/2}.
\]

We consider an infinite-horizon discounted mean-field optimal control
problem. Starting from an initial condition
$(s,\mu)\in\mathbb R^d\times\mathcal P_2(\mathbb R^d)$, where $s$
denotes the state of a representative agent and $\mu$ the distribution
of the population, a social planner seeks to optimize the collective
performance of the system over time. The objective is to maximize the
expected discounted reward generated by the controlled mean-field
dynamics. This leads to the optimal value function over an admissible set $\Pi_{\mathrm{add}}$
\begin{align}\label{eq:optimal_CP}
  V^{*}(s,\mu)
  \,:=\,
  \sup_{\pi\in \Pi_{\mathrm{add}}} V^\pi(s,\mu),
  \qquad
  (s,\mu)\in\mathbb R^d\times\mathcal P_2(\mathbb R^d).
\end{align}
Here $V^\pi(s,\mu)$ denotes the value associated with a policy
$\pi\in\Pi_{\mathrm{add}}$ when the system starts from $(s,\mu)$.
Whenever there exists a policy $\pi^*\in \Pi_{\mathrm{add}}$ such that
$V^*(s,\mu)=V^{\pi^*}(s,\mu)$ for all
$(s,\mu)\in\mathbb R^d\times\mathcal P_2(\mathbb R^d)$, we call
$\pi^*$ an optimal policy.

More precisely, for a given policy $\pi\in \Pi_{\mathrm{add}}$, the
corresponding value is defined by
\begin{align}\label{eq:value_fn}
  V^{\pi}(s,\mu)
  \,:=\,
  \mathbb E\!\left[
    \int_0^\infty e^{-\beta t}\,
    r_\lambda^\pi(s_t,\mu_t)\,\d t
    \;\bigg|\;
    s_0=s,\ \mu_0=\mu
  \right],
\end{align}
where $\beta>0$ is the discount factor, $r_\lambda^\pi$ is the running
reward, and $(s_t,\mu_t)_{t\ge0}$ denotes the controlled
representative-population state process. In classical mean-field
control, one often works with deterministic feedback controls. In the
present work, by contrast, we allow for randomized feedback policies.
This choice is natural in reinforcement learning, where exploration
plays a central role and is naturally encoded through Markov kernels.
Accordingly, we take $\Pi_{\mathrm{add}}$ to be the class of Markov
kernels
\[
  \pi:\mathbb R^d\times\mathcal P_2(\mathbb R^d)\longrightarrow
  \mathcal P(\mathcal A),
  \qquad
  (x,\mu)\longmapsto \pi(\cdot|x,\mu),
\]
such that $\pi(\cdot|x,\mu)\ll \nu$ for every $(x,\mu)$, with jointly
measurable density $p_\pi(\cdot|x,\mu)$, where $\nu$ denotes a
$\sigma$-finite reference measure on
$(\mathcal A,\mathcal B(\mathcal A))$. The running reward in
\eqref{eq:value_fn} is the entropy-regularized averaged reward
\begin{equation}\label{eq:reg_reward_avg}
  \begin{aligned}
    r_\lambda^\pi(x,\mu)
    &:= r^\pi(x,\mu)+\lambda\,\mathcal H\bigl(\pi(\cdot|x,\mu)\bigr)\\
    &:= \int_{\mathcal A} r(x,\mu,a)\,\pi(\d a|x,\mu)
    -\lambda\int_{\mathcal A}
    \log p_\pi(a|x,\mu)\,\pi(\d a|x,\mu),
  \end{aligned}
\end{equation}
where $\lambda>0$ is the temperature parameter. Entropy regularization
is standard in continuous-time reinforcement learning and promotes
exploration; see, for instance,
\cite{wang2020, jia2022, jia2023, jia_qlearning2023,
Wei2023ContinuousTQ, frikhaActorCritic}.

We now introduce the controlled dynamics associated with a policy
$\pi\in \Pi_{\mathrm{add}}$. Let
$b:\mathbb R^d\times\mathcal P_2(\mathbb R^d)\times\mathcal A
\to\mathbb R^d$
and
$\sigma:\mathbb R^d\times\mathcal P_2(\mathbb R^d)\times\mathcal A
\to\mathbb R^{d\times n}$
be measurable coefficients. The averaged drift and diffusion matrix
induced by $\pi$ are defined by
\begin{align*}
  b^\pi(x,\mu)
  := \int_{\mathcal A} b(x,\mu,a)\,\pi(\d a|x,\mu),
  \qquad
  \Sigma^\pi(x,\mu)
  := \int_{\mathcal A}
  \sigma(x,\mu,a)\sigma(x,\mu,a)^\top\,\pi(\d a|x,\mu),
\end{align*}
and we write
$\sigma^\pi(x,\mu):=\bigl(\Sigma^\pi(x,\mu)\bigr)^{1/2}
\in\mathbb R^{d\times d}$ for the symmetric positive semidefinite
square root of $\Sigma^\pi$. The representative state process then
solves
\begin{align}\label{eq:rep_SDE}
  \d s_t
  = b^\pi(s_t,\mu_t)\,\d t
  + \sigma^\pi(s_t,\mu_t)\,\d B_t,
  \qquad
  s_0=s,
\end{align}
while the population distribution $\mu_t$ is generated by the McKean-Vlasov
equation
\begin{align}\label{eq:MKV_SDE}
  \d\tilde s_t
  = b^\pi(\tilde s_t,\mu_t)\,\d t
  + \sigma^\pi(\tilde s_t,\mu_t)\,\d\tilde B_t,
  \qquad
  \tilde s_0\sim\mu,
  \qquad
  \mu_t=\Law(\tilde s_t),
\end{align}
where $B$ and $\tilde B$ are independent $d$-dimensional Brownian
motions. The pair $(s_t,\mu_t)$ appearing in \eqref{eq:value_fn} is
therefore obtained by coupling a representative agent with the
population law generated by \eqref{eq:MKV_SDE}. 
\smallbreak

Formally, for each initial condition $(s,\mu)\in\mathbb R^d\times\mathcal P_2(\mathbb R^d)$ and each horizon $T>0$, we work on a probability space $(\Omega,\mathcal F,\mathbb P)$ endowed with a filtration $\mathbb F=(\mathcal F_t)_{0\le t\le T}$ such that $\mathcal F_0$ contains all $\mathbb P$-null sets of $\mathcal F$, the filtration is right-continuous, and $(\Omega,\mathcal F,\mathbb F,\mathbb P)$ supports two independent $d$-dimensional $\mathbb F$-Brownian motions $B$ and $\tilde B$, together with an $\mathcal F_0$-measurable random variable $\tilde s_0$ satisfying $\Law(\tilde s_0)=\mu$ and independent of $(B,\tilde B)$. The representative component starts from the deterministic value $s$. Under Assumption \ref{A1} (below), the population equation admits a unique strong $\mathbb F$-adapted solution $\tilde s$, and we then define $\mu_t:=\Law(\tilde s_t)$; in particular, $(\mu_t)_{0\le t\le T}$ is a deterministic flow in $\mathcal P_2(\mathbb R^d)$; see
\cite[Chapter 4]{carbook}. Once this flow is fixed, due to Assumption \ref{A1}, the representative dynamics is an $\mathbb F$-adapted stochastic differential equation, and therefore it also admits a unique strong solution.

\begin{remark}[On randomized and aggregated formulations]
\label{rem:equiv}
The state--law formulation adopted here, namely a representative state
coupled with a population law flow on
$\mathbb R^d\times\mathcal P_2(\mathbb R^d)$, is consistent with the
general McKean--Vlasov framework of \cite{BuckdahnLiPengRainer2017} in
the uncontrolled case and \cite{BayraktarCossoPham2018} in the control
setting.

Given a stochastic policy $ \pi:\mathbb R^d\times\mathcal P_2(\mathbb R^d)\to \mathcal P(\mathcal A),$ the most direct non-averaged formulation is to sample actions continuously in time according to the current state and population law, namely
\begin{align}
  &\d s_t
  = b(s_t,\mu_t,a_t^s)\,\d t
     + \sigma(s_t,\mu_t,a_t^s)\,\d B_t, \quad  a_t^s \sim \pi(\cdot|s_t,\mu_t),\label{eq:SDE_MKV_action_a}\\
  &\d\tilde s_t
  = b(\tilde s_t,\mu_t,a_t^\mu)\,\d t
     + \sigma(\tilde s_t,\mu_t,a_t^\mu)\,\d\tilde B_t,\quad  a_t^\mu \sim \pi(\cdot|\tilde s_t,\mu_t),\quad \mu_t=\Law(\tilde s_t)\label{eq:SDE_MKV_action_a2}
\end{align}
Formally, one may realize the sampling through a measurable map
$\Phi_\pi(x,\mu,u)$ such that $\Phi_\pi(x,\mu,U)\sim\pi(\cdot|x,\mu)$
for $U\sim{\rm Unif}[0,1]$, and then set
$a_t^s=\Phi_\pi(s_t,\mu_t,U_t^s)$,
$a_t^\mu=\Phi_\pi(\tilde s_t,\mu_t,U_t^\mu)$. The difficulty is that, on
the usual product space, a continuum of independent samples in time need
not be jointly measurable in $(t,\omega)$, so the coefficients may fail
to be progressively measurable; see \cite[Remark 2.1]{SzpruchTZ2024}. A rigorous pathwise construction
is nevertheless possible on a suitable enriched product space, for
instance a rich Fubini extension in the sense of \cite{Sun2006}; this is
precisely the route taken in \cite{frikhaActorCritic} for \eqref{eq:SDE_MKV_action_a}--\eqref{eq:SDE_MKV_action_a2}.

A different non-averaged interpretation is to sample the action only on a
time grid and then keep it constant between intervention times. This
produces an SDE with piecewise constant random controls on the usual
probability space. Related piecewise constant randomizations also appear
in \cite{BayraktarCossoPham2018} within the randomized McKean--Vlasov
control problem, while in the continuous-time reinforcement learning
literature such sampled dynamics are studied in
\cite{SzpruchTZ2024,JiaOuyangZhang2025}. In general, discretely sampled dynamics do not coincide with the
aggregated dynamics; rather, they converge to them as the mesh size
tends to zero; see \cite[Theorems 4.1--4.2]{JiaOuyangZhang2025}.

By contrast, the aggregated formulation
\eqref{eq:rep_SDE}--\eqref{eq:MKV_SDE} is the appropriate law-level
representation of the continuously randomized model
\eqref{eq:SDE_MKV_action_a}-\eqref{eq:SDE_MKV_action_a2}. More precisely, whenever
\eqref{eq:SDE_MKV_action_a}-\eqref{eq:SDE_MKV_action_a2} is constructed on an enriched space and is
well posed, the law of its state process solves the same
McKean--Vlasov martingale problem as the aggregated system \eqref{eq:rep_SDE}--\eqref{eq:MKV_SDE}. Hence,
under weak uniqueness, both formulations induce the same law and
therefore the same value functional; see
\cite[Lemma A.1]{Wei2023ContinuousTQ},
\cite{KarouiMeleard1990,Lacker} and \cite[Proof of Prop. 2.1]{frikhaActorCritic}.
\end{remark}

\paragraph{Organization of the paper.}  The rest of the paper is devoted to approximating the optimal policy of
\eqref{eq:optimal_CP} through an actor--critic iteration. Given a
parametrized randomized policy $\pi_\omega$, each iteration consists of
two steps:
\begin{enumerate}
    \item \emph{Policy evaluation.} For the current policy
    $\pi_\omega$, the critic computes or approximates the value function
    $V^{\pi_\omega}(s,\mu)$ by solving the associated linear stationary
    HJB equation.

    \item  \emph{Policy improvement.} The actor updates the policy parameters
using a policy gradient formula for the performance functional
$J(\omega)=\mathbb E[V^{\pi_\omega}(s,\mu)]$, for instance through the
gradient ascent step
$\omega\leftarrow \omega+\alpha\nabla_\omega J(\omega)$.
\end{enumerate}
In Section \ref{sec:HJB}, we study the policy-evaluation step and show
that, for a fixed admissible policy, the value function is the unique
classical solution of the associated stationary Hamilton--Jacobi--Bellman
equation. In Section \ref{sec:policy-iteration}, we derive the policy
gradient theorem and its parametric version for the coupled
representative agent--population system. In Section \ref{sec:policy-iteration-algorithm}, we describe how the
actor--critic method can be implemented in practice in the model-based
setting. Section \ref{sec:LQR} analyzes the entropy-regularized
infinite-horizon mean-field linear-quadratic regulator, which provides
an explicit benchmark for the general theory. Finally, Section \ref{sec:numerics} presents
numerical examples illustrating our method.


\section{The Stationary HJB Equation}
\label{sec:HJB}

The central object of study in this section is the value function
$V^\pi$ defined in \eqref{eq:value_fn}. Our goal is to characterize
it as the unique classical solution of a stationary Hamilton-Jacobi-Bellman
(HJB) equation.  

\subsection{The infinitesimal generator and stationary HJB equation}
\label{subsec:Lions}

The dynamics of the pair $(s_t, \mu_t)$ under
policy $\pi$ are governed by \eqref{eq:rep_SDE}-\eqref{eq:MKV_SDE}. To differentiate a function $F: \mathbb{R}^d \times
\mathcal{P}_2(\mathbb{R}^d) \to \mathbb{R}$ with respect to the
measure argument $\mu$, we rely on the notion of \emph{Lions
derivative}, introduced by Lions in his lectures at
the Collège de France \cite{lio12} and further developed
in \cite{carmona2018probabilistic1, carbook}.
Applying the Itô-Lions formula to a smooth function $F:
\mathbb{R}^d \times \mathcal{P}_2(\mathbb{R}^d) \to \mathbb{R}$ and
taking expectations, one identifies the infinitesimal generator of
$(s_t, \mu_t)$ as the operator
\begin{equation}
  \label{eq:L-pi}
  \begin{aligned}
    (\mathcal{L}_{b,\Sigma}^{\pi}F)(s,\mu)
    &:=
    b^{\pi}(s,\mu)\cdot\nabla_s F(s,\mu)
    + \tfrac{1}{2}\,\Sigma^{\pi}(s,\mu) : \nabla^2_{ss} F(s,\mu)\\
    &\quad + \int_{\mathbb{R}^d}
      \Bigl[
        b^{\pi}(\xi,\mu)\cdot \partial_{\mu}F(s,\mu)(\xi)
        + \tfrac{1}{2}\,\Sigma^{\pi}(\xi,\mu)
          : D_\xi\partial_{\mu}F(s,\mu)(\xi)
      \Bigr]\,\mu(\d\xi),
  \end{aligned}
\end{equation}
where $\xi \mapsto \partial_\mu F(s,\mu)(\xi)$ denotes the Lions
derivative of $F$ with respect to $\mu$, evaluated at
$(s,\mu,\xi)\in \mathbb{R}^d\times\mathcal{P}_2(\mathbb{R}^d)\times\mathbb{R}^d$.
We also denote by $D_\xi \partial_\mu F(s,\mu)(\xi)$ the Jacobian
matrix of $\xi \mapsto \partial_\mu F(s,\mu)(\xi)$ with respect to
$\xi \in \mathbb{R}^d$. For a
comprehensive treatment we refer to \cite[Chapter 5]{carmona2018probabilistic1}
and \cite[Chapter 1]{carbook}. Also $A:B := \mathrm{Tr}(A^\top B)$ is the Frobenius inner product,
$\nabla_s F$ and $\nabla^2_{ss} F$ are the gradient and Hessian of $F$
with respect to $s \in \mathbb{R}^d$.
\smallbreak

The generator \eqref{eq:L-pi} has two parts: the first line is the
classical Itô generator for $s_t$ driven by $b^\pi$ and $\Sigma^\pi$, that is \eqref{eq:rep_SDE}, 
while the integral term captures the variation of $F$ as the measure
$\mu_t = \mathcal{L}(\tilde{s}_t)$ evolves under the McKean-Vlasov
equation \eqref{eq:MKV_SDE}.

\smallbreak

In order to simplify the exposition we introduce the following class of functions. 
\begin{definition}[$\mathcal C^{2,2}$ functions with polynomial growth]\label{def:C22poly}
  Let $k \in \mathbb{N}$. A map
  $F:\mathbb{R}^d\times\mathcal{P}_2(\mathbb{R}^d)\to\mathbb{R}^k$
  belongs to
  $\mathcal{C}^{2,2}(\mathbb{R}^d\times\mathcal{P}_2(\mathbb{R}^d))$
  if, for every $\mu\in\mathcal{P}_2(\mathbb{R}^d)$, the map
  $s\mapsto F(s,\mu)$ is twice continuously differentiable on
  $\mathbb{R}^d$, and, for every $s\in\mathbb{R}^d$, the map
  $\mu\mapsto F(s,\mu)$ is Lions differentiable on
  $\mathcal{P}_2(\mathbb{R}^d)$. Moreover, the maps
  $(s,\mu,\xi)\mapsto \partial_\mu F(s,\mu)(\xi)$ and
  $(s,\mu,\xi)\mapsto D_\xi\partial_\mu F(s,\mu)(\xi)$ are jointly
  continuous.

  We say that $F\in\mathcal{C}^{2,2}_{\mathrm{poly}}(\mathbb{R}^d\times\mathcal{P}_2(\mathbb{R}^d))$
  if
  $F\in\mathcal{C}^{2,2}(\mathbb{R}^d\times\mathcal{P}_2(\mathbb{R}^d))$
  and there exists a constant $C_F>0$ such that, for all $(s,\mu,\xi)\in\mathbb{R}^d\times\mathcal{P}_2(\mathbb{R}^d)\times\mathbb{R}^d$,
  \begin{align*}
    |F(s,\mu)|
    &\leq C_F\bigl(1+|s|^2+m_2(\mu)\bigr),\\
    \|\nabla_s F(s,\mu)\| + |\partial_\mu F(s,\mu)(\xi)|
    &\leq C_F\bigl(1+|s|+|\xi|+m_2(\mu)\bigr),\\
    \|D^2_{ss}F(s,\mu)\| + \|D_\xi\partial_\mu F(s,\mu)(\xi)\|
    &\leq C_F\bigl(1+m_2(\mu)\bigr),
  \end{align*}
where $m_p(\mu)$ denotes the $p-$th moment of $\mu$.
\end{definition}

To establish $\mathcal{C}^{2,2}_{\mathrm{poly}}$
regularity of $V^\pi$ and the existence of a solution of the HJB equation in the
classical sense, we additionally require the following smoothness on
the policy-aggregated coefficients.

\medbreak

\refstepcounter{Acond}\label{A1}%
\noindent\textbf{(\theAcond) $\mathcal{C}^{2,2}$-regularity of averaged coefficients.}
Let $\pi \in \Pi_{\mathrm{add}}$ be fixed. We assume that, for every component
$h\in\{b_i^\pi,\sigma_{i,j}^\pi:i,j=1,\dots,d\}$, the derivatives
$\nabla_s h(s,\mu)$, $D^2_{ss}h(s,\mu)$, $\partial_\mu h(s,\mu)(\xi)$,
and $D_\xi\partial_\mu h(s,\mu)(\xi)$ exist for all
$(s,\mu,\xi)\in\mathbb{R}^d\times\mathcal{P}_2(\mathbb{R}^d)\times\mathbb{R}^d$,
are bounded by a constant $K_\pi$, and are locally Lipschitz continuous in $(s,\mu,\xi)$.
Also assume that $r_\lambda^\pi \in
  \mathcal{C}^{2,2}_{\mathrm{poly}}
  (\mathbb{R}^d\times\mathcal{P}_2(\mathbb{R}^d)).$

\begin{remark}\label{rem:linear_growth}
  Using \Cref{A1} we can assume for simplicity that $K_\pi > 0$ is big enough such that for all
  $s \in \mathbb{R}^d$ and $\mu \in \mathcal{P}_2(\mathbb{R}^d)$,
  \begin{align}\label{eq:linear_growth_bpi}
    |b^\pi(s,\mu)| + \|\sigma^\pi(s,\mu)\|
    \leq K_\pi\bigl(1 + |s| + \sqrt{m_2(\mu)}\bigr).
  \end{align}
  Also, since the bounded derivatives in \Cref{A1} ensure the existence of $L_\pi > 0$ such that for all $s, s'\in\mathbb{R}^d$ and
$\mu, \mu'\in\mathcal{P}_2(\mathbb{R}^d)$,
\begin{align*}
  |b^\pi(s,\mu)-b^\pi(s',\mu')|
  +
  \|\sigma^\pi(s,\mu)-\sigma^\pi(s',\mu')\|
  \le
  L_\pi\bigl(|s-s'|+\mathcal{W}_2(\mu,\mu')\bigr).
\end{align*}
  Moreover, since $\Sigma^\pi=\sigma^\pi(\sigma^\pi)^\top$, it follows from
  \Cref{A1} and the product rule that
  $\Sigma^\pi \in \mathcal{C}^{2,2}(\mathbb{R}^d\times\mathcal{P}_2(\mathbb{R}^d))$,
  and there exists $C_\pi>0$ such that, for all $(s,\mu,\xi)\in\mathbb{R}^d\times\mathcal{P}_2(\mathbb{R}^d)\times\mathbb{R}^d$,
\begin{equation}\label{eq:bounds_Simga_pi}
    \begin{aligned}
         \|\Sigma^\pi(s,\mu)\|
    \leq C_\pi\bigl(1+|s|^2+m_2(\mu)\bigr),\\
    \|\nabla_s\Sigma^\pi(s,\mu)\|
    + |\partial_\mu\Sigma^\pi(s,\mu)(\xi)|
    + \|D^2_{ss}\Sigma^\pi(s,\mu)\|
    + \|D_\xi\partial_\mu\Sigma^\pi(s,\mu)(\xi)\|
    &\leq C_\pi\bigl(1+|s|+\sqrt{m_2(\mu)}\bigr).
    \end{aligned}
\end{equation}
\end{remark}

We can now state the main result of this section, which characterizes the infinite-horizon value function as the unique classical solution of the stationary Hamilton-Jacobi-Bellman equation.

\begin{theorem}[Hamilton-Jacobi-Bellman equation]\label{thm:MF_eval_infinite} 
  Let $\pi \in \Pi_{\mathrm{add}}$ be a fixed admissible policy and suppose
  Assumption \ref{A1} holds. Then there exists a constant
  $\bar{\beta}_\pi>0$, depending only on the structural constants in
  Assumption \ref{A1}, such that the following holds. If $\beta>\bar{\beta}_\pi$, then the value function $V^\pi$ defined in \eqref{eq:value_fn} is well defined and belongs to
  $\mathcal{C}^{2,2}_{\mathrm{poly}}
  (\mathbb{R}^d\times\mathcal{P}_2(\mathbb{R}^d))$.
  Moreover, $V^\pi$ is a classical solution of
  \begin{align}\label{eq:HJB-eval}
    (\mathcal{L}_{b,\Sigma}^{\pi}-\beta)V^\pi(s,\mu)
    +r_\lambda^\pi(s,\mu)=0,
    \qquad
    (s,\mu)\in\mathbb{R}^d\times\mathcal{P}_2(\mathbb{R}^d),
  \end{align}
  where $\mathcal{L}_{b,\Sigma}^{\pi}$ is given by \eqref{eq:L-pi}.
  Finally, $V^\pi$ is the unique classical solution of
  \eqref{eq:HJB-eval} in the class
  $\mathcal{C}^{2,2}_{\mathrm{poly}}
  (\mathbb{R}^d\times\mathcal{P}_2(\mathbb{R}^d))$. Moreover, assume in addition  for all $(s,\mu)\in\mathbb R^d\times\mathcal P_2(\mathbb R^d)$
\begin{align}\label{eq:dissipativity_remark}
2\,s\cdot b^\pi(s,\mu)+\|\sigma^\pi(s,\mu)\|_F^2
\le C_0-\kappa |s|^2 + C_{\mathrm{diss}}\,m_2(\mu)
\end{align}
holds. Then the value function
$V^\pi$ is well defined for every $\beta>0$ and there exists $C_r>0$ such that $|r_\lambda^\pi(s,\mu)|\le C_r(1+|s|^2+m_2(\mu))$. Furthermore, if $C_0=0$ 
then $V^\pi$ is also well defined for $\beta=0$.
\end{theorem}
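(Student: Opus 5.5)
The plan is the standard Feynman--Kac route in three stages: moment bounds giving well-posedness, $\mathcal C^{2,2}_{\mathrm{poly}}$ regularity of $V^\pi$ via finite-horizon flows, and then the PDE and uniqueness via the Itô--Lions formula. The dissipative statements are handled at the end.

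\emph{Stage 1: moment bounds.} Apply Itô's formula to $|s_t|^2$ and $|\tilde s_t|^2$ using the linear growth \eqref{eq:linear_growth_bpi}. Since $m_2(\mu_t)=\mathbb E|\tilde s_t|^2$, Gronwall gives $m_2(\mu_t)\le e^{c_\pi t}(1+m_2(\mu))$. Feeding this into the representative equation gives $\mathbb E|s_t|^2\le C e^{c_\pi t}(1+|s|^2+m_2(\mu))$, where $c_\pi$ depends only on $K_\pi$. The quadratic growth of $r_\lambda^\pi$ from \Cref{A1} then makes the integrand in \eqref{eq:value_fn} dominated by $e^{-(\beta-c_\pi)t}$. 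So $V^\pi$ is well defined with quadratic growth once $\beta>c_\pi$.

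\emph{Stage 2: regularity.} For regularity I would write $V^\pi=\int_0^\infty e^{-\beta t}u(t,s,\mu)\,\d t$ with $u(t,s,\mu)=\mathbb E[r_\lambda^\pi(s_t^{s,\mu},\mu_t^\mu)]$. I would then invoke the finite-horizon theory of \cite{BuckdahnLiPengRainer2017} (decoupled system, $\mathcal C^{2,2}$ coefficients with bounded, locally Lipschitz derivatives) to get $u(t,\cdot)\in\mathcal C^{2,2}$.

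The derivatives $\nabla_s u$, $D^2_{ss}u$, $\partial_\mu u(\xi)$ and $D_\xi\partial_\mu u(\xi)$ are represented through first- and second-order derivative processes $\partial_s s_t$, $\partial_\mu s_t(\xi)$, $\partial_\mu\tilde s_t(\xi)$, etc. These solve linear SDEs with coefficients bounded by $K_\pi$. Their $L^p$ norms therefore grow at most like $e^{c'_\pi t}$, with the polynomial prefactors required by Definition~\ref{def:C22poly}. The chain rule, combined with the growth bounds on the derivatives of $r_\lambda^\pi$, then yields bounds of the form $e^{c'_\pi t}C(1+|s|+|\xi|+m_2(\mu))$ and so on.

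Taking $\bar\beta_\pi$ larger than all these exponential rates, dominated convergence lets me differentiate under $\int_0^\infty$. Continuity of the derivatives is inherited from continuity in $(s,\mu,\xi)$ of the finite-horizon objects, with uniform integrability supplied by the moment bounds. I expect this stage to be the main obstacle: the bookkeeping for the second-order Lions derivative $D_\xi\partial_\mu$, whose process involves the coupled pair $(\tilde s,\tilde s^{\xi})$ and its mixed derivatives, while keeping exponential rates independent of $(s,\mu)$.

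\emph{Stage 3: PDE and uniqueness.} For the PDE, use the flow (Markov) property of the decoupled system: the pair $(s_t,\mu_t)$ is time-homogeneous because the coefficients do not depend on $t$. This gives $V^\pi(s,\mu)=\mathbb E[\int_0^h e^{-\beta t}r_\lambda^\pi\,\d t+e^{-\beta h}V^\pi(s_h,\mu_h)]$. Apply the Itô--Lions formula to $e^{-\beta t}V^\pi(s_t,\mu_t)$, which is legitimate for $\mathcal C^{2,2}_{\mathrm{poly}}$ functions since the stochastic integrals are true martingales by the moment bounds. Dividing by $h$ and letting $h\to0$, by continuity, gives \eqref{eq:HJB-eval}.

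For uniqueness, take $W\in\mathcal C^{2,2}_{\mathrm{poly}}$ solving \eqref{eq:HJB-eval}. Itô--Lions on $[0,T]$ gives $W(s,\mu)=\mathbb E[\int_0^T e^{-\beta t}r_\lambda^\pi\,\d t]+e^{-\beta T}\mathbb E[W(s_T,\mu_T)]$. The last term is bounded by $e^{-(\beta-c_\pi)T}C(1+|s|^2+m_2(\mu))\to0$, so letting $T\to\infty$ gives $W=V^\pi$.

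\emph{Dissipative case.} Under \eqref{eq:dissipativity_remark}, Itô's formula gives $\frac{\d}{\d t}\mathbb E|\tilde s_t|^2\le C_0-(\kappa-C_{\mathrm{diss}})m_2(\mu_t)$, presuming $\kappa>C_{\mathrm{diss}}$, which I would need to assume or read as implicit. Similarly, $\frac{\d}{\d t}\mathbb E|s_t|^2\le C_0-\kappa\mathbb E|s_t|^2+C_{\mathrm{diss}}m_2(\mu_t)$. Hence both second moments remain uniformly bounded in $t$, so $V^\pi$ is finite for every $\beta>0$, given the quadratic bound $C_r$ on $r_\lambda^\pi$ from \Cref{A1}. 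When $C_0=0$, the moments decay exponentially, $\mathbb E|s_t|^2+m_2(\mu_t)\le Ce^{-ct}(1+|s|^2+m_2(\mu))$, so the integral converges even with $\beta=0$.
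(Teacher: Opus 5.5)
Your plan follows the paper's architecture. Stage~1 is Lemma~\ref{lem:moment-growth-clean}, and the derivative-flow bounds are Lemma~\ref{lem:variational-growth}. Regularity comes from the finite-horizon representation formulas plus dominated convergence: the paper truncates at $T$, shows $D_\alpha U_T^\pi$ is locally uniformly Cauchy, and transfers Lions differentiability to the limit through the mean-value identity. That identity is how your ``differentiate under $\int_0^\infty$'' should be made rigorous for $\partial_\mu$. Uniqueness is argued exactly as in the paper.

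The one genuinely different step is the PDE. You use the flow property and It\^o--Lions on $[0,h]$ with $h\downarrow0$. The paper instead evaluates the finite-horizon Kolmogorov equation for $U_T^\pi$ at $t=0$ and lets $T\to\infty$, using $-\partial_tV_T^\pi(0,s,\mu)=e^{-\beta T}\mathbb E[r_\lambda^\pi(X_T^{s,\mu},\mu_T)]\to0$. Your route avoids passing $\mathcal L^\pi_{b,\Sigma}U_T^\pi$ to the limit, at the price of applying It\^o--Lions to $V^\pi$ itself. Both are fine once regularity is available. Two steps, however, do not go through.

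\textbf{The case $\beta=0$.} Exponential decay of $\mathbb E|s_t|^2+m_2(\mu_t)$ controls only the quadratic part of $C_r(1+|s|^2+m_2(\mu))$. The resulting bound $\mathbb E|r_\lambda^\pi(s_t,\mu_t)|\le C_r\bigl(1+Ce^{-ct}(1+|s|^2+m_2(\mu))\bigr)$ is not integrable on $[0,\infty)$, so your last sentence does not follow.

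No sharper estimate can fix this. Take $C_0=0$ and $\mu=\delta_0$ in \eqref{eq:dissipativity_remark}:
\begin{itemize}
\item at $s=0$ the inequality forces $\sigma^\pi(0,\delta_0)=0$;
\item at $s=\epsilon e$ with $|e|=1$ and $\epsilon\downarrow0$ it forces $e\cdot b^\pi(0,\delta_0)\le0$ for every unit $e$, i.e.\ $b^\pi(0,\delta_0)=0$.
\end{itemize}
Hence $(s_t,\mu_t)\equiv(0,\delta_0)$ from that initial point, and $V^\pi(0,\delta_0)=r_\lambda^\pi(0,\delta_0)/\beta$. This blows up as $\beta\downarrow0$ unless $r_\lambda^\pi(0,\delta_0)=0$, which the entropy term alone generically prevents. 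You need an extra hypothesis such as $|r_\lambda^\pi(s,\mu)|\le C(|s|^2+m_2(\mu))$.

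The paper supplies no argument here either. Remark~\ref{rem:dissipative_second_moment} only gives uniform-in-time moment bounds, which settle $\beta>0$. That remark also assumes $\kappa>C_{\mathrm{diss}}$, which you rightly flagged as necessary.

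\textbf{Second-order growth.} The ``and so on'' in Stage~2 hides a bound that fails. $D^2_{ss}V^\pi$ contains $\int_0^\infty e^{-\beta t}\mathbb E[\nabla_s r_\lambda^\pi(X_t^{s,\mu},\mu_t)\,D^2_{xx}X_t^{s,\mu}]\,\mathrm dt$. Since $\nabla_s r_\lambda^\pi$ grows linearly, the chain rule only gives $C(1+|s|+m_2(\mu))$, not the $C(1+m_2(\mu))$ required by Definition~\ref{def:C22poly}. Likewise $D_\xi\partial_\mu V^\pi$ picks up $|s|+|\xi|$, for instance from $\partial_\mu r_\lambda^\pi(\cdot)(\widetilde X_t^{\xi,\mu})\,\widetilde H_t^{\xi,\mu}$. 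The paper itself bounds that term by $C(1+|s|+|\xi|+m_2(\mu))$ before concluding $C(1+m_2(\mu))$ in its Step~3.

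The loss is real, not an artifact of the estimate. Take $d=1$, no measure dependence, $b^\pi(x)=\sin x$, $\sigma^\pi\equiv0$, and $r_\lambda^\pi(x)=x^2+\mathrm{const}$. At $s_k=2k\pi+\pi/2$ one has $X_t^{s_k}=2k\pi+2\arctan(e^t)$ and $D^2_{xx}X_t^{s_k}=-\tanh t\operatorname{sech}t$. Therefore $D^2_{ss}V^\pi(s_k)=c_1-4k\pi\int_0^\infty e^{-\beta t}\tanh t\operatorname{sech}t\,\mathrm dt$ with $c_1$ independent of $k$, which is unbounded in $k$.

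This is a defect of the function class in the statement rather than of your route specifically. Still, as written neither argument proves $V^\pi\in\mathcal C^{2,2}_{\mathrm{poly}}$. Moreover, once $D_\xi\partial_\mu V^\pi$ grows in $|\xi|$ while $\|\Sigma^\pi(\xi,\mu)\|\lesssim 1+|\xi|^2+m_2(\mu)$, the Lions term in $\mathcal L^\pi_{b,\Sigma}V^\pi$ is no longer guaranteed to be finite for $\mu\in\mathcal P_2(\mathbb R^d)$. The same applies to your It\^o--Lions step, your $h\downarrow0$ limit, and the uniqueness class. To repair this you need either a larger class (linear growth of second derivatives plus extra moments on $\mu$) or structural assumptions that make the second-order derivative flows vanish.
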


The proof of \Cref{thm:MF_eval_infinite} is provided in \Cref{APP:PROOFS_SEC_HJB}. Several remarks on the above theorem follows. 

First, the constant $\bar\beta_\pi$ in \Cref{thm:MF_eval_infinite} is defined by $\bar\beta_\pi=\max\{\beta_0(2),\beta_{\mathrm{var}}(2)\}$, where $\beta_0(2)$ and $\beta_{\mathrm{var}}(2)$ are given explicitly in \Cref{remark_explicit_beta_02,remark_explicit_beta_var02}, and depend only on $d$ and $K_\pi$.

Second, once the policy $\pi$ is fixed, the stationary HJB equation becomes linear. The proof of \Cref{thm:LQ_infinite} then relies on two preliminary lemmas. The first one provides explicit exponential-in-time $L^p$ bounds for the state processes, while the second one gives exponential-in-time bounds for the derivatives with respect to the state and measure variables. These estimates are the key input in the infinite-horizon argument, since they quantify precisely how the growth in time interacts with the discount factor. Starting from these identities, the proof combines the finite-horizon result of \cite{frikha2024full} with a passage to the limit as $T\to\infty$. The previous lemmas ensure that this limit can be performed both at the level of the value function and in the equation itself, which yields the classical solution of the stationary HJB equation together with uniqueness in the polynomial-growth class. The argument is also closely related in spirit to the McKean-Vlasov PDE framework of \cite{BuckdahnLiPengRainer2017}, although the latter deals with finite-horizon problems, whereas here we consider an infinite-horizon discounted setting.


\section{Policy Gradient}
\label{sec:policy-iteration}

This section develops the policy improvement step for the mean-field
control problem introduced in Section \ref{sec:formulation}.  The
policy evaluation step has already been addressed in
Section \ref{sec:HJB}: given a feedback randomized policy $\pi$, the
value function $V^\pi$ is well defined and can be characterized as the
unique classical solution of a stationary HJB equation.  The goal of
the present section is to derive an explicit formula for the gradient
of the performance functional with respect to a parametric family of
policies.

A distinctive feature of the mean-field setting is that the
performance functional depends on two initial data: the initial law
$\mu \in \mathcal{P}_2(\mathbb{R}^d)$ generating the population flow,
and the initial distribution $\mu_0 \in \mathcal{P}_2(\mathbb{R}^d)$
of the representative agent.  Throughout this section, $\mu$ is fixed
and the performance of a policy $\pi$ is measured by
\begin{equation*}
    J_{\mu,\mu_0}(\pi)
    \;:=\;
    \int_{\mathbb{R}^d} V^\pi(s,\mu)\,\mu_0(\d s).
\end{equation*}
For a parametric family $\{\pi_\omega\}_{\omega\in\mathbb{R}^p}$ we write
\begin{equation}\label{eq:parametric-objective-policy-iteration}
    J(\omega)
    \;:=\;
    J_{\mu,\mu_0}(\pi_\omega)
    \;=\;
    \int_{\mathbb{R}^d} V^{\pi_\omega}(s,\mu)\,\mu_0(\d s).
\end{equation}

The main difficulty is that $J(\omega)$ depends on $\omega$ both
through the representative dynamics and through the induced mean-field
flow. In what follows we will provide a policy gradient theorem and a result for the parametrized case.

\begin{remark}
For any $s_0\in\R^d$, the Dirac measure $\delta_{s_0}$ belongs to $\mathcal{P}_2(\R^d)$. Hence, by choosing $\mu_0=\delta_{s_0}$, the results of this section apply to the pointwise performance functional $J_{\mu,s_0}(\pi):=V^\pi(s_0,\mu)$ as well.
\end{remark}

\subsection{Policy gradient theorem}
\label{subsec:policy-gradient}

Let $\pi\in \Pi_{\mathrm{add}}$ and let $\varphi$ be a measurable
signed kernel $\mathbb{R}^d \times \mathcal{P}_2(\mathbb{R}^d) \to
\mathcal{M}(\mathcal{A})$ satisfying
\begin{equation}\label{eq:signed-kernel-zero-mass}
    \varphi(\cdot \mid s,\mu) \ll \pi(\cdot \mid s,\mu),
    \qquad
    \int_{\mathcal{A}} \varphi(\d a \mid s,\mu) = 0,
    \qquad
    (s,\mu) \in \mathbb{R}^d \times \mathcal{P}_2(\mathbb{R}^d),
\end{equation}
with bounded Radon-Nikodym derivative $\psi := \d\varphi/\d\pi \in
L^\infty$.  The zero-mass condition in \eqref{eq:signed-kernel-zero-mass}
ensures that $\pi^\varepsilon$ remains a probability kernel for small
$\varepsilon$.  For $|\varepsilon|$ small, define the perturbed kernel
\begin{equation}\label{eq:epsilon-perturbed-policy}
    \pi^\varepsilon(\d a \mid s,\mu)
    := \pi(\d a \mid s,\mu) + \varepsilon\,\varphi(\d a \mid s,\mu)
    = \bigl(1 + \varepsilon\psi(s,\mu,a)\bigr)\,\pi(\d a \mid s,\mu),
\end{equation}
and assume $\pi^\varepsilon$ is an admissible feedback randomized
policy for all sufficiently small $|\varepsilon|$.
\smallbreak

Before introducing the policy gradient theorem, we introduce the following advantage functions. 

\begin{definition}[Advantage functions]
\label{def:essential-pointwise-kernels}
Let $\pi$ be a feedback randomized policy with value function
$V^\pi$.  For $(s,\mu,a) \in \mathbb{R}^d \times
\mathcal{P}_2(\mathbb{R}^d) \times \mathcal{A}$, define the
\emph{representative advantage function}
\begin{equation*}
    q^{\pi}_{\mathrm{rep}}(s,\mu,a)
    \;:=\;
    r(s,\mu,a)
    - \lambda\log p_\pi(a \mid s,\mu)
    + b(s,\mu,a) \cdot \nabla_s V^\pi(s,\mu)
    + \tfrac{1}{2}\,\Sigma(s,\mu,a) : D^2_{ss}V^\pi(s,\mu),
\end{equation*}
and for $(\xi,a) \in \mathbb{R}^d \times \mathcal{A}$, define the
\emph{population advantage function}
\begin{equation*}
    q^{\pi}_{\mathrm{pop}}(s,\mu,\xi,a)
    \;:=\;
    b(\xi,\mu,a) \cdot \partial_\mu V^\pi(s,\mu)(\xi)
    + \tfrac{1}{2}\,\Sigma(\xi,\mu,a) :
      D_\xi\partial_\mu V^\pi(s,\mu)(\xi).
\end{equation*}
\end{definition}

\begin{remark}[Interpretation of the advantage functions]\label{rmk:adv}
In standard reinforcement learning, the advantage function can be
written through the Bellman evaluation identity as
\[
A^\pi(s,a)
=
r(s,a)+\gamma\,\mathbb E\!\left[
V^\pi(S')\mid S=s,\mathfrak a=a
\right]-V^\pi(s).
\]
Thus, it compares the effect of taking the action $a$ with the value
generated by the policy. In the single-agent continuous-time setting,
the same structure is expressed through the infinitesimal generator, so
that the corresponding action-dependent term is given by the running
reward plus the generator applied to the value function, minus the
discount term.

The present mean-field setting preserves this structure. The
representative advantage function
$q^\pi_{\mathrm{rep}}$ is the natural extension of the
single-agent continuous-time term: it contains the entropy-regularized
instantaneous reward together with the contribution of the generator
acting on the state variable \cite{jia_qlearning2023, Baird1994ReinforcementLI,tallec2019, zhu2024phibe}. The population advantage function
$q^\pi_{\mathrm{pop}}$ is specific to the mean-field
framework and describes the additional contribution coming from the
dependence of $V^\pi$ on the population law through its Lions
derivatives. In this way, the classical advantage structure is
decomposed into a representative part and a population part. 

Finally, let us note that \cite{Wei2023ContinuousTQ} introduces two
q-functions in the McKean--Vlasov control setting, an integrated
q-function and an essential q-function. The integrated q-function is a
central object in the proposed learning procedure. Since its weak
martingale characterization is formulated with respect to all test
policies in a neighbourhood of the target policy, its practical
implementation relies on averaging the corresponding loss over nearby
test policies. The essential q-function plays the role of a
pointwise-in-action quantity for policy improvement, and in this
respect our kernels $q^\pi_{\mathrm{rep}}$ and
$q^\pi_{\mathrm{pop}}$ are closer in spirit to that object, although
our formulation is for the coupled representative-agent / population
system rather than for the population-only formulation. The main
difference is that, in our actor--critic pipeline, the policy gradient
theorem requires only the pointwise-in-action kernels
$q^\pi_{\mathrm{rep}}$ and $q^\pi_{\mathrm{pop}}$, which are obtained
directly from $V^\pi$ and its derivatives. Therefore, no auxiliary
integrated q-function is needed, nor does our approach require testing
over a neighbourhood of the target policy.

\end{remark}

\begin{definition}[Discounted occupancy measure]\label{def:discounted_measure}
Let $\pi\in\Pi_{\mathrm{add}}$ be a fixed policy, and let $(s_t^\pi,\mu_t^\pi)_{t\ge0}$ be the state-population process associated with \eqref{eq:rep_SDE}-\eqref{eq:MKV_SDE} starting from $s_0^\pi=s\sim \mu_0,$ and $\mu_0^\pi=\mu$. The discounted occupancy measure $\rho_{\mu,\mu_0}^\pi$ on $\R^d\times\mathcal P_2(\R^d)$ is defined by
\begin{align*}
\rho_{\mu,\mu_0}^\pi(B)
:=
\beta\, \mathbb{E}\!\left[\int_0^\infty e^{-\beta t}\,
\mathbf{1}_{\{(s_t^\pi,\mu_t^\pi)\in B\}}\,\d t\right],
\qquad
B\in\mathcal B\bigl(\R^d\times\mathcal P_2(\R^d)\bigr).
\end{align*}
\end{definition}
\begin{remark}[Equivalent representations of $\rho_{\mu,\mu_0}^\pi$]
For $t\ge0$, let $P_t^\pi(s_0,\mu;\cdot):=\Law(s_t^\pi\mid s_0,\mu)$.
Then, by Tonelli's theorem, $\rho_{\mu,\mu_0}^\pi$ admits the representation
\[
\rho_{\mu,\mu_0}^\pi(B)
=
\beta \int_0^\infty e^{-\beta t}
\left(
\int_{\mathbb R^d}\int_{\mathbb R^d}
\mathbf 1_B(x,\mu_t^\pi)\,
P_t^\pi(s_0,\mu;\d x)\,\mu_0(\d s_0)
\right)\d t,
\qquad
B\in\mathcal B(\mathbb R^d\times\mathcal P_2(\mathbb R^d)).
\]
Moreover, denoting by $\delta_{\mu_t^\pi}\in\mathcal P(\mathcal P_2(\mathbb R^d))$
the Dirac measure at $\mu_t^\pi$, the previous identity can be written
equivalently, in measure form, as
\[
\rho_{\mu,\mu_0}^\pi(\d x,\d\nu)
=
\beta\int_0^\infty e^{-\beta t}
\left(
\int_{\mathbb R^d}
P_t^\pi(s_0,\mu;\d x)\,\mu_0(\d s_0)
\right)\delta_{\mu_t^\pi}(\d\nu)\,\d t.
\]
\end{remark}

\begin{theorem}[G\^ateaux policy gradient]
\label{thm:policy-gradient-theorem}
Fix a baseline policy $\pi\in\Pi_{\mathrm{add}}$. Let $\varphi$ be a signed kernel
satisfying \eqref{eq:signed-kernel-zero-mass}, and let $\pi^\varepsilon$ be as
in \eqref{eq:epsilon-perturbed-policy}, admissible for all
$|\varepsilon|\leq\varepsilon_0$. Suppose Assumption \ref{A1}
holds for both $\pi$ and $\pi^\varepsilon$, with  $|\varepsilon|\leq\varepsilon_0$. Assume $\mu,\;\mu_0\in\mathcal{P}_2(\mathbb{R}^d)$ and $\beta > 2 \bar\beta_\pi$, where $\bar\beta_\pi$ from
Theorem \ref{thm:MF_eval_infinite}. Then
the map
$\varepsilon\mapsto J(\varepsilon):=\int_{\mathbb{R}^d}
V^{\pi^\varepsilon}(s,\mu)\,\mu_0(\d s)$
is differentiable at $\varepsilon=0$, with
\begin{equation}\label{eq:gateaux-pg}
    \begin{aligned}
       \frac{\d}{\d\varepsilon}J(\varepsilon)\bigg|_{\varepsilon=0}
=
\frac1\beta\mathbb{E}_{(s,\mu)\sim\rho_{\mu,\mu_0}^\pi}\!\left[
\int_{\mathcal A}
q^{\pi}_{\mathrm{rep}}(s,\mu,a)\,
\varphi(\d a\mid s,\mu)
+
\int_{\R^d}\int_{\mathcal A}
q^{\pi}_{\mathrm{pop}}(s,\mu,\xi,a)\,
\varphi(\d a\mid \xi,\mu)\,\mu(\d\xi)
\right].
    \end{aligned}
\end{equation}
\end{theorem}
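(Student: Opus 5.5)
The plan is a performance-difference argument. We compare $V^{\pi^\varepsilon}$ with $V^\pi$ along the trajectory of the perturbed system, using the HJB equation of Theorem \ref{thm:MF_eval_infinite} for $\pi$ evaluated along the $\pi^\varepsilon$-dynamics. Fix $\varepsilon$ and let $(s^\varepsilon_t,\mu^\varepsilon_t)$ solve \eqref{eq:rep_SDE}--\eqref{eq:MKV_SDE} under $\pi^\varepsilon$ with $s^\varepsilon_0=s$ and $\mu^\varepsilon_0=\mu$. Since $V^\pi\in\mathcal C^{2,2}_{\mathrm{poly}}$, the Itô--Lions formula applied to $e^{-\beta t}V^\pi(s^\varepsilon_t,\mu^\varepsilon_t)$ gives, after taking expectations and letting $T\to\infty$,
\[
V^\pi(s,\mu)=\mathbb E\Big[\int_0^\infty e^{-\beta t}\big(\beta-\mathcal L^{\pi^\varepsilon}_{b,\Sigma}\big)V^\pi(s^\varepsilon_t,\mu^\varepsilon_t)\,\d t\Big].
\]
The limit is justified because the polynomial growth of $V^\pi$ and the moment bounds behind Theorem \ref{thm:MF_eval_infinite} (valid for $\pi^\varepsilon$, since \ref{A1} holds for it) make $e^{-\beta T}\mathbb E|V^\pi(s^\varepsilon_T,\mu^\varepsilon_T)|\to0$. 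Subtracting this from the definition \eqref{eq:value_fn} of $V^{\pi^\varepsilon}$ yields
\[
V^{\pi^\varepsilon}(s,\mu)-V^\pi(s,\mu)=\mathbb E\Big[\int_0^\infty e^{-\beta t}\Big(r^{\pi^\varepsilon}_\lambda+\mathcal L^{\pi^\varepsilon}_{b,\Sigma}V^\pi-\beta V^\pi\Big)(s^\varepsilon_t,\mu^\varepsilon_t)\,\d t\Big].
\]
We then subtract the HJB identity $(\mathcal L^\pi_{b,\Sigma}-\beta)V^\pi+r^\pi_\lambda=0$ inside the integrand.

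Next we expand the integrand in $\varepsilon$. Everything is linear in the kernel except the entropy. We have $b^{\pi^\varepsilon}-b^\pi=\varepsilon\int b\,\d\varphi$, and similarly for $\Sigma$ and $r$. In the measure part of the generator, the kernel enters at the point $\xi$, which produces exactly $\varepsilon\int_{\R^d}\int_{\mathcal A}q^\pi_{\mathrm{pop}}(s,\mu,\xi,a)\varphi(\d a|\xi,\mu)\mu(\d\xi)$. For the entropy we use $p_{\pi^\varepsilon}=(1+\varepsilon\psi)p_\pi$, so
\[
-\int\log p_{\pi^\varepsilon}\,\d\pi^\varepsilon+\int\log p_\pi\,\d\pi=-\varepsilon\int\log p_\pi\,\d\varphi-\int(1+\varepsilon\psi)\log(1+\varepsilon\psi)\,\d\pi.
\]
The last term equals $-\varepsilon\int\psi\,\d\pi+O(\varepsilon^2\|\psi\|_\infty^2)=O(\varepsilon^2)$, using the zero-mass condition in \eqref{eq:signed-kernel-zero-mass}. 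Hence the integrand equals $\varepsilon\,G(s^\varepsilon_t,\mu^\varepsilon_t)+R_\varepsilon(s^\varepsilon_t,\mu^\varepsilon_t)$, where $G$ is the bracket in \eqref{eq:gateaux-pg} and $|R_\varepsilon|\le C\varepsilon^2$. The constant depends only on $\lambda$ and $\|\psi\|_\infty$, since the $\pi$-average of the bounded function $(1+\varepsilon\psi)\log(1+\varepsilon\psi)-\varepsilon\psi$ is uniformly $O(\varepsilon^2)$.

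Dividing by $\varepsilon$, integrating against $\mu_0$ and letting $\varepsilon\to0$ leaves the main obstacle: we must show that $\mathbb E\int_0^\infty e^{-\beta t}G(s^\varepsilon_t,\mu^\varepsilon_t)\,\d t\to\mathbb E\int_0^\infty e^{-\beta t}G(s_t,\mu_t)\,\d t$, i.e.\ continuity of the discounted occupancy measure in $\varepsilon$ against the quadratically growing, continuous test function $G$. The growth bound on $G$ follows from Definition \ref{def:C22poly} and the bounds on $b,\Sigma,r$ in \ref{A1}, multiplied by $\|\psi\|_\infty$. For the convergence we couple both systems on the same Brownian motions and use the bounded perturbation $|b^{\pi^\varepsilon}-b^\pi|+|\Sigma^{\pi^\varepsilon}-\Sigma^\pi|\le C|\varepsilon|\|\psi\|_\infty(1+|x|^2+m_2)$ together with Lipschitz estimates (uniform in $|\varepsilon|\le\varepsilon_0$). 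A Grönwall argument then gives $\mathbb E|s^\varepsilon_t-s_t|^2+\mathcal W_2^2(\mu^\varepsilon_t,\mu_t)\le C|\varepsilon|e^{ct}$, and this yields convergence in probability for each $t$. Uniform integrability follows from second-moment bounds of size $e^{c't}$; the assumption $\beta>2\bar\beta_\pi$ supplies room for these exponential rates, allowing e.g.\ a Cauchy--Schwarz splitting that leaves $e^{-\beta t/2}$ against moments of order up to four. Dominated convergence in $t$ then concludes. Finally we rewrite $\int_0^\infty e^{-\beta t}\mathbb E[G]\,\d t=\beta^{-1}\mathbb E_{\rho^\pi_{\mu,\mu_0}}[G]$ via Definition \ref{def:discounted_measure}.
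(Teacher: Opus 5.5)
Your proposal is correct and is essentially the paper's proof. Your performance-difference identity is the paper's split $A(\varepsilon;s)+B(\varepsilon;s)$ recombined: It\^o--Lions for $V^\pi$ along the $\pi^\varepsilon$-dynamics plus the HJB for $\pi$ yields the paper's exact formula $B(\varepsilon;s)=\int_0^\infty e^{-\beta t}\mathbb E[\mathcal L^\varphi V^\pi(s^\varepsilon_t,\mu^\varepsilon_t)]\,\d t$, and the limit then uses the same coupling estimate (Lemma~\ref{lem:stability-policy}) and dominated convergence. Your explicit $O(\varepsilon^2)$ entropy remainder and your continuity-plus-uniform-integrability limit at fixed $t$ are slightly more economical than the paper's mean-value arguments, which implicitly need derivatives of $\mathcal L^\varphi V^\pi$. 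Two corrections: $G$ grows like $(1+|s|^2)(1+m_2(\mu))+m_2(\mu)^2$, not quadratically, which is exactly why $\beta>2\beta_0(2)$ is needed; and uniform integrability at fixed $t$ comes from the $L^2$ convergence supplied by the coupling, not from second-moment bounds alone.
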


The proof of \Cref{thm:policy-gradient-theorem} is provided in \Cref{APP:PROOFS_SEC_PG}. 

\begin{remark}\label{rmk:key diff}
The preceding theorem, together with Theorem~\ref{thm:parametric-policy-gradient},
is a key contribution of this paper and distinguishes our approach from existing
policy-gradient methods and related $q$-function-based algorithms. Compared with \cite{frikhaActorCritic}, we provide a parameterization-free formulation of the policy gradient, which describes the first-order variation of the objective under arbitrary admissible perturbations of the policy. Compared with
\cite{Wei2023ContinuousTQ}, we use the pointwise $q$-functions to derive the
policy-gradient formula directly, while their method requires computing the integrated q-functions for all test policies in a neighbourhood of the target policy.
\end{remark}

\subsection{Parametric policy gradient}
\label{subsec:parametric-pg}

The G\^ateaux gradient formula \eqref{eq:gateaux-pg} expresses the
directional derivative of $J$ in terms of an arbitrary signed kernel
$\varphi$.  In practice, policies are often parametrized by a finite
dimensional vector $\omega\in\mathbb{R}^p$, and one wishes to compute
$\nabla_\omega J(\omega)$ directly.  The parametric policy gradient
theorem below reduces this computation to a weighted expectation of
the advantage functions against the score function. To state the result precisely, we require the
following two assumptions on the parametric family.
\medbreak

\refstepcounter{Acond}\label{A2}%
\noindent\textbf{(\theAcond) Parametric differentiability of averaged
coefficients.}
For $\nu$-almost every $a\in\mathcal{A}$ and every
$(s,\mu)\in\mathbb{R}^d\times\mathcal{P}_2(\mathbb{R}^d)$, the map
$\omega\mapsto\log p_{\pi_\omega}(a\mid s,\mu)$ is continuously
differentiable. Moreover, for every compact set $K\subset\mathbb{R}^p$,
there exists $C_K>0$ and a function $g_K(\cdot,s,\mu)\in L^1(\mathcal{A},\nu)$ such that for all $\omega\in K$ and all
$(s,\mu)\in\mathbb{R}^d\times\mathcal{P}_2(\mathbb{R}^d)$, the maps $\omega\mapsto p_{\pi_\omega}(\cdot\mid s,\mu)$,
$\omega\mapsto b^{\pi_\omega}(s,\mu)$, $\omega\mapsto\Sigma^{\pi_\omega}(s,\mu)$,
and $\omega\mapsto r_\lambda^{\pi_\omega}(s,\mu)$ are continuously
differentiable, satisfying
\begin{align}    \label{eq:A2-Sigma_r}
\begin{aligned}
&\bigl\|\nabla_\omega p_{\pi_\omega}(a\mid s,\mu)\bigr\|
\le g_K(a,s,\mu)
\qquad\text{for $\nu$-a.e. }a\in\mathcal A.\\
    &\|\nabla_\omega b^{\pi_\omega}(s,\mu)\|
    \leq C_K\bigl(1+|s|+\sqrt{m_2(\mu)}\bigr),\\
    \|\nabla_\omega r_\lambda^{\pi_\omega}&(s,\mu)\|+\|\nabla_\omega\Sigma^{\pi_\omega}(s,\mu)\|
    \leq C_K\bigl(1+|s|^2+m_2(\mu)\bigr).
\end{aligned}
\end{align}

\refstepcounter{Acond}\label{A3}%
\noindent\textbf{(\theAcond) Uniform regularity of the parametric family.}
Assumption \ref{A1} holds for every $\pi_\omega$, with
Lipschitz constant $L_\pi$ and regularity constant $K_\pi$ independent
of $\omega\in\mathbb{R}^p$.

\begin{remark}[On the Assumptions \ref{A1}-\ref{A3}]
\label{rem:gaussian-policy}
The Gaussian policy $\pi_\omega(\cdot\mid s,\mu)
:=\mathcal{N}(f_\omega(s,\mu),\Sigma_0)$, with $\Sigma_0\in\mathbb{R}^{m\times m}$
positive definite, satisfies Assumptions \ref{A1}-\ref{A3} whenever
$f_\omega$ is Lipschitz with linear growth in $(s,\mu)$, belongs to
$\mathcal{C}^{2,2}(\mathbb{R}^d\times\mathcal{P}_2(\mathbb{R}^d))$
with all spatial and Lions derivatives of at most polynomial growth
in $(s,\mu,\xi)$, and $\omega\mapsto f_\omega(s,\mu)$ is continuously
differentiable with at most polynomial growth in $(s,\mu)$; all
uniformly in $\omega$ on compact sets. In particular, this holds when
$f_\omega$ is linear in $s$ and $m_1(\mu)$, or when $f_\omega$ is a
cylindrical neural network \cite[Section 2.2]{pham2022meanfield} with $\mathcal{C}^2$ activation
function of at most linear growth, such as $\tanh$ or $\mathrm{GELU}$.
\end{remark}

\begin{theorem}[Parametric policy gradient]
\label{thm:parametric-policy-gradient}
Let $\{\pi_\omega\}_{\omega\in\mathbb{R}^p}$ be a family of feedback
randomized policies satisfying Assumptions \ref{A1}-\ref{A3}. Fix
$\omega\in\mathbb{R}^p$ and set $\pi:=\pi_\omega$. Assume
$\mu,\mu_0\in\mathcal{P}_2(\mathbb{R}^d)$ and
$\beta>\widetilde\beta_\pi:=\max(\bar\beta_\pi,\,2\beta_0(2))$. Then the map $\omega\mapsto J(\omega)$, defined in \eqref{eq:parametric-objective-policy-iteration}, is differentiable at $\omega$, and
\begin{equation}\label{eq:parametric-pg-expectarion}
    \begin{aligned}
        \nabla_\omega J(\omega)
        &= \frac1\beta\mathbb{E}_{\substack{(s,\mu)\sim \rho_{\mu,\mu_0}^\pi\\a\sim \pi(\cdot|s,\mu)}}\!\Bigl[
            q^{\pi}_{\mathrm{rep}}(s,\mu,a)\,
            \nabla_\omega\log p_\pi(a|s,\mu)\Bigr]\!+\frac1\beta\mathbb{E}_{\substack{(s,\mu)\sim \rho_{\mu,\mu_0}^\pi\\ \xi\sim \mu\\a\sim \pi(\cdot|\xi,\mu)}}\!\Bigl[
            q^{\pi}_{\mathrm{pop}}(s,\mu,\xi,a)\,
            \nabla_\omega\log p_\pi(a|\xi,\mu)
        \Bigr].
    \end{aligned}
\end{equation}
Equivalently, this can be written as
\begin{equation}\label{eq:parametric-pg}
    \begin{aligned}
        \nabla_\omega J(\omega)
        &= \frac1\beta\mathbb{E}_{(s,\mu)\sim \rho_{\mu,\mu_0}^\pi}\!\left[
            \int_{\mathcal{A}}
            q^{\pi}_{\mathrm{rep}}(s,\mu,a)\,
            \nabla_\omega\log p_\pi(a\mid s,\mu)\,
            \pi(\d a\mid s,\mu)\right]\\
            &\quad\qquad\qquad+\frac1\beta\mathbb{E}_{(s,\mu)\sim \rho_{\mu,\mu_0}^\pi}\!\left[\int_{\mathbb{R}^d}\!\int_{\mathcal{A}}
            q^{\pi}_{\mathrm{pop}}(s,\mu,\xi,a)\,
            \nabla_\omega\log p_\pi(a\mid\xi,\mu)\,
            \pi(\d a\mid\xi,\mu)\,
            \mu(\d\xi)
        \right].
    \end{aligned}
\end{equation}
\end{theorem}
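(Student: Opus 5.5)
The plan is to reduce \Cref{thm:parametric-policy-gradient} to the G\^ateaux formula of \Cref{thm:policy-gradient-theorem}, applied coordinatewise along the parameter directions. Fix $\omega$ and a direction $e\in\mathbb R^p$, and set $\omega_\varepsilon:=\omega+\varepsilon e$. Write
\[
\pi_{\omega_\varepsilon}=\pi+\varepsilon\,\varphi_\varepsilon,
\qquad
\varphi_\varepsilon(\d a\mid s,\mu):=\frac{p_{\pi_{\omega_\varepsilon}}-p_\pi}{\varepsilon}(a\mid s,\mu)\,\nu(\d a).
\]
The natural limiting kernel is
\[
\varphi(\d a\mid s,\mu):=e\cdot\nabla_\omega\log p_\pi(a\mid s,\mu)\,\pi(\d a\mid s,\mu).
\]
It has zero mass, because differentiating $\int p_{\pi_\omega}\,\d\nu=1$ under the integral is justified by the dominating function $g_K$ in \Cref{A2}. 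Once the derivative is identified with the G\^ateaux formula for this $\varphi$, \eqref{eq:parametric-pg} follows immediately. The expectation form \eqref{eq:parametric-pg-expectarion} is then just a rewriting, since $\varphi(\d a\mid\cdot)=(\nabla_\omega\log p_\pi)\,\pi(\d a\mid\cdot)$.

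The direct route through \Cref{thm:policy-gradient-theorem} has a gap: the perturbation is not exactly linear in $\varepsilon$, and the score $\psi$ need not be bounded (for Gaussian policies it is not). So I will not invoke the theorem as a black box. Instead I will reproduce its mechanism in a form that only uses differentiability in $\omega$. The key step is a performance-difference identity. Apply the It\^o--Lions formula to $V^\pi$, using $V^\pi\in\mathcal C^{2,2}_{\mathrm{poly}}$ from \Cref{thm:MF_eval_infinite}, along the coupled process $(s_t^{\omega'},\mu_t^{\omega'})$ generated by $\pi':=\pi_{\omega'}$. Then use the HJB equation \eqref{eq:HJB-eval} for $\pi$, integrate against $e^{-\beta t}$, and let $T\to\infty$. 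This gives
\[
J(\omega')-J(\omega)=\mathbb E\int_0^\infty e^{-\beta t}\Big[(r^{\pi'}_\lambda-r^{\pi}_\lambda)+(\mathcal L^{\pi'}_{b,\Sigma}-\mathcal L^{\pi}_{b,\Sigma})V^\pi\Big](s_t^{\omega'},\mu_t^{\omega'})\,\d t .
\]
The passage to the limit uses the exponential moment bounds from the appendix. Those bounds hold uniformly in $\omega'$ by \Cref{A3}, and the condition $\beta>2\beta_0(2)$ controls the products of polynomial weights that appear.

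Next, divide by $\varepsilon$ and take $\omega'=\omega_\varepsilon$. The bracket becomes a difference quotient in $\omega$ of $r_\lambda$, $b$ and $\Sigma$ tested against $\nabla_s V^\pi$, $D^2_{ss}V^\pi$, $\partial_\mu V^\pi$ and $D_\xi\partial_\mu V^\pi$. By the mean value theorem and the growth bounds in \eqref{eq:A2-Sigma_r}, it is dominated by $C(1+|s|^2+m_2(\mu))^2$-type weights, uniformly in small $\varepsilon$. Pointwise it converges to
\[
\nabla_\omega r^{\pi_\omega}_\lambda\cdot e+\nabla_\omega b^{\pi_\omega}\cdot e\cdot\nabla_sV^\pi+\cdots .
\]
Expressing these derivatives through the score gives exactly
\[
\int q^\pi_{\mathrm{rep}}\,\varphi+\int\!\!\int q^\pi_{\mathrm{pop}}\,\varphi\,\d\mu .
\]
The entropy term needs care: $\nabla_\omega\big(-\lambda\int p\log p\,\d\nu\big)$ produces $-\lambda\int(\log p)\,\nabla p\,\d\nu$ plus $-\lambda\int\nabla p\,\d\nu$, and the second piece vanishes by zero mass. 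In addition, the process $(s^{\omega_\varepsilon},\mu^{\omega_\varepsilon})$ must converge in law to $(s^\omega,\mu^\omega)$ with uniformly integrable weights. I will obtain this from Lipschitz stability of the coupled system: $\mathcal W_2(\mu_t^{\omega_\varepsilon},\mu_t^\omega)$ and $\mathbb E|s_t^{\omega_\varepsilon}-s_t^\omega|^2$ are both $\le C\varepsilon^2 e^{ct}$, via Gronwall and the $\omega$-Lipschitz bounds on $b$ and $\sigma$. The latter follow from \Cref{A2} after handling the square root in $\sigma^\pi=(\Sigma^\pi)^{1/2}$, or more simply by working with $\Sigma$ through the martingale-problem formulation.

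I expect the main obstacle to be this dominated-convergence step. It must combine three things: the unboundedness of the score, the continuity of the advantage integrand in $(s,\mu)$ along the perturbed flow, and exponential growth in time against the discount. My first attempt is to bound everything by $C_K(1+|s_t|^2+m_2(\mu_t))^2$ and use the $L^4$-type moment estimate, whose exponential rate is absorbed precisely because $\beta>2\beta_0(2)$. Continuity of $V^\pi$'s derivatives, from the joint continuity in Definition~\ref{def:C22poly}, gives the pointwise convergence. Finally, linearity in $e$ identifies the full gradient, and continuity of the resulting expression in $\omega$ (via \Cref{A3}) upgrades directional derivatives to differentiability.
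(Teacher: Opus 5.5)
Your strategy is the paper's proof. Your performance-difference identity is exactly the paper's split $A(\varepsilon;s)+B(\varepsilon;s)$:
\begin{itemize}
\item $A$ is the reward-difference term along the perturbed flow;
\item $B$ is the generator-difference term, obtained by applying It\^o--Lions to $V^\pi$ along the $\pi_{\omega+\varepsilon v}$-dynamics and using the HJB equation for $\pi$.
\end{itemize}
The paper also re-runs the mechanism of \Cref{thm:policy-gradient-theorem} with the score kernel $\varphi_v$ rather than quoting it. Deriving the stability estimate directly from \Cref{A2} is more careful than the paper's appeal to \Cref{lem:stability-policy}, which is stated for bounded linear perturbations. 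One caveat: without ellipticity the matrix square root is only $\tfrac12$-H\"older. In operator norm, $\|\sigma^{\pi_{\omega'}}-\sigma^{\pi_\omega}\|^2\le\|\Sigma^{\pi_{\omega'}}-\Sigma^{\pi_\omega}\|$, so you get $O(|\varepsilon|)$ rather than $O(\varepsilon^2)$. That still suffices, since only convergence is used.

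\textbf{The step that fails as written is the time integrability.} You dominate the bracket by $C(1+|s_t|^2+m_2(\mu_t))^2$ and invoke an $L^4$ moment estimate. That estimate grows at rate $\beta_0(4)$ from \Cref{lem:moment-growth-clean}, and the hypothesis $\beta>\max(\bar\beta_\pi,2\beta_0(2))$ says nothing about $\beta_0(4)$. Fourth moments really can outgrow squared second moments: for $\mathrm{d}X=aX\,\mathrm{d}t+\sigma X\,\mathrm{d}B$ the rates are $4a+6\sigma^2$ versus $2(2a+\sigma^2)$.

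The fix is to see that no $|x|^4$ ever appears. By Definition~\ref{def:C22poly}, $\|D^2_{ss}V^\pi(x,\nu)\|\le C(1+m_2(\nu))$ carries no $|x|$. Hence the worst contributions to $\varepsilon^{-1}(\mathcal L^{\pi_{\omega'}}-\mathcal L^{\pi_\omega})V^\pi(x,\nu)$ are:
\begin{itemize}
\item $|x|^2 m_2(\nu)$, from the $\Sigma$-difference paired with $D^2_{ss}V^\pi$;
\item $m_2(\nu)^2$, from the $D_\xi\partial_\mu V^\pi$ term after integrating in $\xi$.
\end{itemize}
So the correct bound is $C\bigl[(1+|x|^2)(1+m_2(\nu))+(1+m_2(\nu))^2\bigr]$. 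Since there is no common noise, $m_2(\mu_t^\varepsilon)$ is deterministic. Therefore $\mathbb E\bigl[|s_t^\varepsilon|^2 m_2(\mu_t^\varepsilon)\bigr]=\mathbb E|s_t^\varepsilon|^2\, m_2(\mu_t^\varepsilon)\le Ce^{2\beta_0(2)t}(1+|s|^2+m_2(\mu))(1+m_2(\mu))$, uniformly in $\varepsilon$ by \Cref{A3}. This factorization is exactly where the threshold $2\beta_0(2)$ comes from. It also gives the $K_\mu(1+|s|^2)$ bound you need to integrate against $\mu_0$.

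\textbf{Two smaller points.} First, the difference quotients of $b^{\pi_\omega},\Sigma^{\pi_\omega},r_\lambda^{\pi_\omega}$ are evaluated at the moving point $(s_t^\varepsilon,\mu_t^\varepsilon)$. Pointwise convergence in $(x,\nu)$, together with continuity of the derivatives of $V^\pi$, is therefore not quite enough. You need the $\omega$-derivatives in \Cref{A2} to be jointly continuous in $(\omega,x,\nu)$, or at least locally uniform convergence of the quotients. This holds for the Gaussian family of Remark~\ref{rem:gaussian-policy}, and the paper passes over it too. Second, the paper only establishes directional derivatives that are linear in $v$. Your upgrade to full differentiability, via continuity of the gradient formula in $\omega$, would need continuity of $\omega\mapsto V^{\pi_\omega}$, of its derivatives, and of $\rho^{\pi_\omega}_{\mu,\mu_0}$. \Cref{A3} only supplies uniform constants, so it does not give this on its own.
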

The proof of \Cref{thm:parametric-policy-gradient} is given in \Cref{APP:PROOFS_SEC_PG}.

\begin{remark} \label{rmk:para-gd}
The identity \eqref{eq:parametric-pg-expectarion} keeps the classical
policy-gradient form of \emph{advantage times score}, but now with two
terms. The first expectation involves
$q^\pi_{\mathrm{rep}}$ and corresponds to the representative-agent
contribution, whereas the second involves
$q^\pi_{\mathrm{pop}}$ and captures the population contribution.

From a computational viewpoint, once $V^\pi$ has been evaluated or
approximated, the quantities entering
\eqref{eq:parametric-pg-expectarion} are obtained directly from
$V^\pi$, $b^\pi$, $\Sigma^\pi$, and the score function
$\nabla_\omega \log p_\pi(a\mid s,\mu)$. The expectations with respect
to $\rho_{\mu,\mu_0}^\pi$ can then be approximated from trajectories of
the controlled dynamics under the current policy $\pi_\omega$, together
with samples of actions drawn from $\pi_\omega(\cdot\mid s,\mu)$.
This is precisely the mechanism underlying the actor--critic algorithm
described in Section \ref{sec:policy-iteration-algorithm}.

The formula \eqref{eq:parametric-pg-expectarion} should be contrasted
with the policy-gradient representation in \cite{frikhaActorCritic}. In
\cite{frikhaActorCritic}, the gradient is obtained by differentiating the HJB
characterization of the value function with respect to the policy parameter.
As a result, the gradient itself is characterized as the solution of another
linear HJB equation, whose source term is constructed from the value function
under the current policy. Thus, computing the ascent direction requires solving
an additional policy-evaluation-type problem. By contrast, \eqref{eq:parametric-pg-expectarion} gives an explicit gradient
formula once $V^\pi$ and its derivatives are available. This makes the actor
step more direct.

Moreover, the differentiated HJB equation in
\cite{frikhaActorCritic} contains an additional term $H_\omega[J]$, which
involves the derivatives $\nabla_\omega b^{\pi_\omega}$ and
$\nabla_\omega \Sigma^{\pi_\omega}$. As emphasized in
\cite[Remark~3.3]{frikhaActorCritic}, when the underlying dynamics are unknown, this term is explicitly computable only in special settings, such as separable models. In
contrast, our gradient formula is expressed directly through the
pointwise-in-action kernels derived from $V^\pi$ and its 
derivatives. This form avoids differentiating the aggregated drift and
diffusion with respect to the policy parameters, and is therefore better suited
for the model-free extensions developed in the follow-up paper \cite{secondpaper}.

Finally, we note that Theorem~3.1 of \cite{frikhaActorCritic} is equivalent to our parametric policy-gradient formula \eqref{eq:parametric-pg-expectarion}.
When our initial distribution $\mu_0$ of the representative state is taken to be a
Dirac measure $\mu_0=\delta_s$, our pointwise form of \eqref{eq:parametric-pg-expectarion} agrees with their gradient $G(s,\mu)$. For a general initial
distribution $\mu_0$, averaging their pointwise gradient $\mathbb{E}_{s\sim\mu_0}[G(s,\mu)]$ gives exactly our gradient $\nabla_\omega J$ defined in \eqref{eq:parametric-pg-expectarion}.

\end{remark}

\def\tcr{\textcolor{red}}


\section{Actor-Critic Algorithm}
\label{sec:policy-iteration-algorithm}

We now use the previous results to formulate an actor--critic algorithm. For a fixed policy $\pi_\omega$, the critic
provides a finite-dimensional approximation of the value function
$V^{\pi_\omega}$ solving \eqref{eq:HJB-eval}, while the actor updates
$\omega$ through the policy gradient formula derived in
Section~\ref{sec:policy-iteration}.

For the critic, we use a linear finite-dimensional ansatz on
$\R^d\times\mathcal P_2(\R^d)$, with cylindrical basis functions to
encode the dependence on the population law. This choice is consistent
with classical linear value-function approximation in reinforcement
learning for large state spaces
\cite{Szepesvari2010Algorithms,JinYangWangJordan2020,
AyoubJiaSzepesvariWangYang2020,YinHaoAbbasiYadkoriLazicSzepesvari2022},
as well as with related approximation methods in mean-field control
\cite{BayraktarKara2025LFA}.

Once such a finite-dimensional critic has been obtained, either by the
Galerkin procedure below or by any other method, the policy gradient
theorem can be applied directly. This yields an actor update that retains both the
representative-agent and the population-law contributions of the
coupled mean-field problem.

\subsection{Policy evaluation via Galerkin approximation}
\label{subsec:galerkin-policy-evaluation}
Before introducing the Galerkin approximation, we emphasize that our actor-critic algorithm is not tied to a particular numerical method for solving the linear HJB equation. In principle, the critic can be computed using other approximation schemes, such as nonlinear function approximation with deep neural networks \cite{pham2025actor}, or estimated through martingale-based characterizations of the value function \cite{Wei2023ContinuousTQ}. The Galerkin method presented here should therefore be viewed as one possible critic approximation scheme, corresponding to linear function approximation. We focus on the Galerkin approach for two reasons. First, it gives a transparent and stable way to approximate the value function through projection onto a finite-dimensional function space. Second, this linear-projection structure is well suited for our subsequent extension to the model-free reinforcement-learning setting \cite{secondpaper}, where the dynamics are unknown and the critic must be estimated from data. In that setting, nonlinear function approximation for Bellman-type equations can suffer from the double-sampling issue, which may introduce instability in policy evaluation. By contrast, the Galerkin formulation leads to a more tractable projected critic equation and provides a useful starting point for developing stable model-free algorithms.

\medbreak

Let $\{\phi_1,\dots,\phi_n\}$ be smooth functions on
$\R^d\times\mathcal P_2(\R^d)$ and define the trial space $\mathbb V_n:=\mathrm{span}\{\phi_1,\dots,\phi_n\}.$
We approximate the value function by
\begin{align}\label{eq:galerkin_apprimation}
V_\theta(s,\mu):=\sum_{i=1}^n\theta_i\,\phi_i(s,\mu)
=\theta^\top\Phi(s,\mu),
\qquad
\Phi:=(\phi_1,\dots,\phi_n)^\top,
\end{align}
where $\theta\in\R^n$ is the vector of critic coefficients. Although
the basis functions are defined on
$\R^d\times\mathcal P_2(\R^d)$, the approximation is finite-dimensional
because only finitely many of them are used. In practice, the
dependence on $\mu$ will be chosen through cylindrical functions, as
explained in Subsection \ref{subsec:cylindrical-basis-functions}.

For a fixed policy $\pi_\omega$, we impose the HJB residual associated
with $V_\theta$ to be orthogonal to the trial space with respect to the
reference measure $\mathfrak m$. Thus, we seek
$\theta^*(\omega)\in\R^n$ such that for every $j=1,\dots,n$,
\begin{equation}\label{eq:galerkin-projection-rho}
\int_{\R^d\times\mathcal P_2(\R^d)}
\Bigl(
(\mathcal L_{b,\Sigma}^{\pi_\omega}V_{\theta^*(\omega)})(s,\mu)
-\beta V_{\theta^*(\omega)}(s,\mu)
+r_\lambda^{\pi_\omega}(s,\mu)
\Bigr)\phi_j(s,\mu)\,
\mathfrak m(\d s,\d\mu)=0,
\end{equation}
where $\mathfrak m$ is a fixed Borel probability measure on $\R^d\times\mathcal P_2(\R^d)$. Since the equation is linear in $V$, \eqref{eq:galerkin-projection-rho}
is equivalent to the linear system
\begin{equation}\label{eq:linear-system-policy-evaluation}
A(\omega)\,\theta^*(\omega)=b(\omega),
\end{equation}
where
\begin{equation}\label{eq:definition_A_B_galerkin}
    \begin{aligned}
        A_{ji}(\omega)
&=
\int_{\R^d\times\mathcal P_2(\R^d)}
\phi_j(s,\mu)\,
\Bigl(
\beta\,\phi_i(s,\mu)
-(\mathcal L_{b,\Sigma}^{\pi_\omega}\phi_i)(s,\mu)
\Bigr)\,
\mathfrak m(\d s,\d\mu),
\\
b_j(\omega)
&=
\int_{\R^d\times\mathcal P_2(\R^d)}
r_\lambda^{\pi_\omega}(s,\mu)\,
\phi_j(s,\mu)\,
\mathfrak m(\d s,\d\mu).
    \end{aligned}
\end{equation}
Consequently, whenever $A$ is invertible, the solution of \eqref{eq:linear-system-policy-evaluation} is given by $\theta^*(\omega)= (A(\omega))^{-1}b(\omega)$. In any case, independently of the particular approximation used for
$A(\omega)$ and $b(\omega)$, the corresponding critic is obtained as the
Galerkin approximation
\begin{align}\label{eq:solution_gal_approx}
    V_n^{\pi_\omega}(s,\mu)
    :=
    V_{\theta^*(\omega)}(s,\mu).
\end{align}

\subsection{Cylindrical basis functions and moment features}
\label{subsec:cylindrical-basis-functions}

A central issue in the mean-field setting is the representation of the
measure variable. A convenient class of functions is given by
cylindrical functions, for which the Lions derivatives can be computed
explicitly. This makes them particularly attractive for Galerkin
approximations of the HJB equation.

Let $k\in\N$ and fix feature functions
$\varphi_1,\dots,\varphi_k\in C_b^2(\R^d)$. We define the associated
moment map
\[
m(\mu):=\bigl(m_1(\mu),\dots,m_k(\mu)\bigr)\in\R^k,
\qquad
m_i(\mu):=\int_{\R^d}\varphi_i(\xi)\,\mu(\d\xi),
\quad i=1,\dots,k.
\]
A function $F:\R^d\times\mathcal P_2(\R^d)\to\R$ is called cylindrical
with respect to the feature family $\{\varphi_i\}_{i=1}^k$ if there
exists a smooth map $\Psi:\R^d\times\R^k\to\R$ such that
\[
F(s,\mu)=\Psi\bigl(s,m(\mu)\bigr).
\]
Thus, the dependence on $\mu$ is reduced to finitely many moments or,
more generally, finitely many linear observables of $\mu$.

This construction is useful for the policy-evaluation step for two
reasons. First, it leads to a finite-dimensional parametrization of the
measure argument. Second, the derivatives entering the HJB operator
become explicit. Indeed, if $F(s,\mu)=\Psi(s,m(\mu))$, then
\begin{align}\label{eq:deerivatives_of_cilindrical_func}
\begin{aligned}
    \nabla_s F(s,\mu)
=
\nabla_s\Psi\bigl(s,m(\mu)\bigr),&
\qquad
D^2_{ss}F(s,\mu)
=
D^2_{ss}\Psi\bigl(s,m(\mu)\bigr),
\\
\partial_\mu F(s,\mu)(\xi)
=
\sum_{i=1}^k
\partial_{m_i}\Psi\bigl(s,m(\mu)\bigr)\,\nabla\varphi_i(\xi),&
\qquad
D_\xi\partial_\mu F(s,\mu)(\xi)
=
\sum_{i=1}^k
\partial_{m_i}\Psi\bigl(s,m(\mu)\bigr)\,D^2_{\xi\xi}\varphi_i(\xi).
\end{aligned}
\end{align}
Therefore, once the feature functions $\varphi_i$ are fixed, the
operator $\mathcal L_{b,\Sigma}^{\pi_\omega}F$ can be evaluated
explicitly for cylindrical functions. This motivates choosing, in the
Galerkin approximation \eqref{eq:galerkin-projection-rho}, basis
functions of the form
$\phi_j(s,\mu)=\psi_j\bigl(s,m(\mu)\bigr)$, $j=1,\dots,n$, with
$\psi_j$ smooth on $\R^d\times\R^k$. In this setting, the problem is
naturally expressed in terms of the reduced variables
$(s,m(\mu))\in\R^d\times\R^k$, and the reference measure may therefore
be chosen as a Borel probability measure $\overline{\mathfrak m}$ on
$\R^d\times\R^k$, built from $N>0$ observations
$\{(s_\tau,m_\tau)\}_{\tau=1}^{N}$, with $m_\tau=m(\mu_\tau)$ , sampled on the time grid
$t_\tau=(\tau-1)\Delta t$ along the trajectories of
\eqref{eq:rep_SDE}--\eqref{eq:MKV_SDE}. A natural choice is to take $\overline{\mathfrak m}$ as the empirical
discounted occupancy measure on the reduced space, namely
\begin{align}
\label{eq:uni_weighted_measure}
    \overline{\mathfrak m}
    =
    \beta\sum_{\tau=1}^{N}
    e^{-\beta t_\tau}\Delta t\,
    \delta_{(s_\tau,m(\mu_\tau))},
    \qquad\text{or}\qquad
    \overline{\mathfrak m}
    =
    \frac{1}{N}\sum_{\tau=1}^{N}
    \delta_{(s_\tau,m(\mu_\tau))},
\end{align}
the latter corresponding to the empirical undiscounted measure on the
finite observation window. In Section~\ref{subsec:systemic-risk}, we also implemented the undiscounted measure. Our numerical results indicate that the algorithm is robust with respect to the choice of measure $\overline{\mathfrak m}$.

In practice, one may average over $L>0$ independent trajectories in order
to reduce the variance of the empirical estimator. This is naturally
reflected in the definition of $\overline{\mathfrak m}$ by averaging the
empirical measure across trajectories. Consequently, if
$\{(s_\eta,m_\eta)\}_{\eta=1}^{N_D}$, with $N_D=N\cdot L$, denotes the
collection of $N$ observations obtained from these $L$ trajectories,
then one empirical approximation of \eqref{eq:definition_A_B_galerkin},
based on the undiscounted measure, is given by
\begin{equation*}
    \begin{aligned}
        \widehat A_{ji}(\omega)
        &:=
        \frac{1}{N_D}\sum_{\eta=1}^{N_D}
        \psi_j(s_\eta,m_\eta)\,
        \Bigl(
        \beta\,\psi_i(s_\eta,m_\eta)
        -
        (\mathcal L_{b,\Sigma}^{\pi_\omega}\phi_i)(s_\eta,m_\eta)
        \Bigr),
        \\
        \widehat b_j(\omega)
        &:=
        \frac{1}{N_D}\sum_{\eta=1}^{N_D}
        r_\lambda^{\pi_\omega}(s_\eta,m_\eta)\,
        \psi_j(s_\eta,m_\eta),
    \end{aligned}
\end{equation*}
for $i,j=1,\dots,n$. From a modeling viewpoint, the choice of the features $\varphi_1,\dots,\varphi_k$ should reflect the expected structure of the value function. In linear-quadratic problems, moment-based features are especially natural, because the value function depends on $\mu$ through a small number of low-order moments; see \Cref{thm:LQ_infinite}. For more general problems, one may enrich the family of features by adding nonlinear observables of the population distribution.

\subsection{Policy gradient}
\label{subsec:computational-policy-improvement}

Let $\pi_\omega$ be a parametrized policy, with parameter
$\omega\in\R^p$, and let $V_n^{\pi_\omega}$ be either the critic obtained
from \eqref{eq:solution_gal_approx}, or, more generally, any
finite-dimensional approximation of $V^{\pi_\omega}$ for which the
derivatives entering the policy gradient formula are well defined and
can be evaluated. The policy gradient theorem from
Section~\ref{sec:policy-iteration} provides an exact gradient formula
in terms of a representative contribution and a population
contribution. For implementation, we keep exactly the same structure,
but replace the exact value function $V^{\pi_\omega}$ by its
approximation $V_n^{\pi_\omega}$.

Accordingly, we define
$q_{\mathrm{rep},n}^{\pi_\omega}$ and
$q_{\mathrm{pop},n}^{\pi_\omega}$ as in
\Cref{def:essential-pointwise-kernels}, with $V^\pi$ replaced by
$V_n^{\pi_\omega}$. The resulting approximate policy-gradient
direction is
\begin{equation}\label{eq:approx-gradient-direction}
    \begin{aligned}
        \nabla_\omega J_n^{\pi_\omega}
        &:=
        \frac1\beta\int_{\R^d\times\mathcal P_2(\R^d)}
        \Biggl(
            \int_{\mathcal A}
            q_{\mathrm{rep},n}^{\pi_\omega}(s,\mu,a)\,
            \nabla_\omega \log p_{\pi_\omega}(a\mid s,\mu)\,
            \pi_\omega(\d a\mid s,\mu)
            \\
        &\qquad\qquad
            +
            \int_{\R^d}\!\int_{\mathcal A}
            q_{\mathrm{pop},n}^{\pi_\omega}(s,\mu,\xi,a)\,
            \nabla_\omega \log p_{\pi_\omega}(a\mid \xi,\mu)\,
            \pi_\omega(\d a\mid \xi,\mu)\,
            \mu(\d \xi)
        \Biggr)
        \rho^{\pi_\omega}(\d s,\d \mu).
    \end{aligned}
\end{equation}

When the critic is expanded in cylindrical basis functions,
$q_{\mathrm{rep},n}^{\pi_\omega}$ and
$q_{\mathrm{pop},n}^{\pi_\omega}$ can be evaluated explicitly. Indeed,
in that case the derivatives of $V_n^{\pi_\omega}$ are given directly by
the formulas in \eqref{eq:deerivatives_of_cilindrical_func}. For numerical purposes, \eqref{eq:approx-gradient-direction} may be
approximated from a finite collection of observations
$\{(s_\eta,\mu_\eta)\}_{\eta=1}^{N_D}\subset
\R^d\times\mathcal P_2(\R^d)$ together with nonnegative weights
$\{w_\eta\}_{\eta=1}^{N_D}$. This yields the empirical approximation
\begin{equation}\label{eq:general_empirical_gradient}
    \begin{aligned}
        \widehat{\nabla_\omega J_n^{\pi_\omega}}
        &:=
        \frac1\beta\sum_{\eta=1}^{N_D} w_\eta
        \Biggl(
            \int_{\mathcal A}
            q_{\mathrm{rep},n}^{\pi_\omega}(s_\eta,\mu_\eta,a)\,
            \nabla_\omega \log p_{\pi_\omega}(a\mid s_\eta,\mu_\eta)\,
            \pi_\omega(\d a\mid s_\eta,\mu_\eta)
            \\
        &\qquad\qquad\qquad
            +
            \int_{\R^d}\!\int_{\mathcal A}
            q_{\mathrm{pop},n}^{\pi_\omega}(s_\eta,\mu_\eta,\xi,a)\,
            \nabla_\omega \log p_{\pi_\omega}(a\mid \xi,\mu_\eta)\,
            \pi_\omega(\d a\mid \xi,\mu_\eta)\,
            \mu_\eta(\d \xi)
        \Biggr).
    \end{aligned}
\end{equation}
Several remarks on numerically computing the above gradient are in order. 
First, the integral can be approximated by empirical mean through sampling actions $a_j$ from the current policy and sampling states $s_i$ from the current $\mu_\eta(\xi)$.  Second, regarding the weight $w_\eta$,
when the measure in
\eqref{eq:approx-gradient-direction} is chosen as the discounted
occupancy measure $\rho^{\pi_\omega}$, we consider
$L$ independent trajectories
$\{(s_\tau^\ell,\mu_\tau^\ell)\}_{\tau=0}^{N-1}$,
$\ell=1,\dots,L$, of the controlled process
$(s_t^{\pi_\omega},\mu_t^{\pi_\omega})$ sampled on the time grid
$t_\tau=\tau\Delta t$, and identify the observations in
\eqref{eq:general_empirical_gradient} with these data, so that
$N_D=LN$ and
$w_\tau^\ell=\frac{1}{L}e^{-\beta t_\tau}\Delta t$. However, numerically we found that simply setting $w_\eta = \frac{1}{N_D}$ also provides a good approximation of the ascent direction. 
Finally, regardless of the specific approximation used for
$\nabla_\omega J_n^{\pi_\omega}$, the parameter update is of the form
\[
\omega^{(k+1)}
=
\omega^{(k)}+\alpha_k\,\widehat{\nabla_\omega J_n^{\pi_\omega}},
\]
where $\alpha_k>0$ is the stepsize. 
\smallbreak

The discussion above leads to \Cref{alg:computational-policy-iteration}.

\begin{algorithm}[H]
\caption{Computational policy iteration for the coupled system}
\label{alg:computational-policy-iteration}
\begin{algorithmic}[1]
\State Choose a parametrized policy family $\{\pi_\omega\}_{\omega\in\R^p}$.
\State Choose a finite-dimensional trial space
$\mathbb V_n=\mathrm{span}\{\phi_1,\dots,\phi_n\}$, preferably with
cylindrical basis functions in the measure variable.
\State Initialize the actor parameter $\omega^{(0)}$.
\For{$k=0,1,\dots,K-1$}
    \State \textbf{Policy evaluation:}
    solve the linear Galerkin system
    \eqref{eq:linear-system-policy-evaluation} and define
    \[
    V_n^{\pi_{\omega^{(k)}}}(s,\mu)
    :=(\theta^{(k)})^\top\Phi(s,\mu).
    \]
    \State \textbf{Policy improvement:}
    construct the approximate advantage functions
    $q_{\mathrm{rep},n}^{\pi_\omega}$ and $q_{\mathrm{pop},n}^{\pi_\omega}$ using
    $V_n^{\pi_{\omega^{(k)}}}$.
    \State Estimate the gradient direction
    $ \widehat J_n(\omega^{(k)})$ from \eqref{eq:general_empirical_gradient}.
    \State Update the actor by
    \[
    \omega^{(k+1)}
    =
    \omega^{(k)}+\alpha_k\, \widehat{\nabla_\omega J_n^{\pi_\omega}}.
    \]
\EndFor
\State Return the final policy $\pi_{\omega^{(K)}}$ and the associated
critic $V_n^{\pi_{\omega^{(K)}}}$.
\end{algorithmic}
\end{algorithm}

\section{Infinite-Time Mean-Field Linear Quadratic Regulator}
\label{sec:LQR}

In this section we analyze the infinite-horizon mean-field linear-quadratic regulator with stationary randomized feedback policies and entropy regularization. Our aim is to derive an explicit characterization of the optimal policy and of the associated value function in terms of a stationary algebraic Riccati system. Besides its intrinsic interest as a solvable class of mean-field control problems, this model is also relevant as a benchmark for entropy-regularized reinforcement learning, where randomized policies and Shannon regularization naturally arise in connection with exploration.

\subsection{Problem setting}
Let $d,m\in\mathbb N$ and set $\mathcal A=\mathbb R^m$. For notational simplicity we take a one-dimensional Brownian motion; the extension to a multi-dimensional Brownian motion is immediate by interpreting $\|\cdot\|_F$ and $\sigma\sigma^\top$ accordingly. Fix $(s,\mu)\in\mathbb R^d\times\mathcal P_2(\mathbb R^d)$ and consider the representative and population dynamics introduced in \eqref{eq:rep_SDE}-\eqref{eq:MKV_SDE}, where $\tilde s_0\sim\mu$ and $s_0=s$, with coefficients
\begin{equation}\label{eq:general_coeff_LQR}
    b(x,\mu,a)=Ax+\bar A\,\bar\mu+Ba,
    \qquad
    \sigma(x,\mu,a)=\gamma+Dx+\bar D\,\bar\mu+Fa,
    \qquad
    \bar\mu:=\int_{\mathbb R^d}x\,\mu(\d x),
\end{equation}
for fixed matrices $A,\bar A,D,\bar D\in\mathbb R^{d\times d}$, $B,F\in\mathbb R^{d\times m}$, and $\gamma\in\mathbb R^d$. Let $Q,\bar Q\in\mathbb S_+^d$, $N\in\mathbb S_{++}^m$, $I\in\mathbb R^{m\times d}$, and $M\in\mathbb R^d$, $H\in\mathbb R^m$. We consider the running reward
\begin{equation}\label{eq:LQ_running_reward}
    r(x,\mu,a)
    =
    -
    \Bigl(
        x^\top Qx
        +\bar\mu^\top\bar Q\,\bar\mu
        +a^\top Na
        +2a^\top I x
        +2M^\top x
        +2H^\top a
    \Bigr),
\end{equation}
and, for $\lambda>0$, the entropy-regularized reward $r_\lambda$ introduced in \eqref{eq:reg_reward_avg}. For $\beta>0$ we define
\begin{equation*}
    V^\pi(s,\mu)
    :=
    \mathbb E^{s,\mu,\pi}\!\left[
        \int_0^\infty e^{-\beta t}\,
        r_\lambda(s_t,\mu_t,a_t^s)\,\d t
    \right],
    \qquad
    V^*(s,\mu):=\sup_{\pi\in\Pi_{\mathrm{add}}}V^\pi(s,\mu),
\end{equation*}
where $\Pi_{\mathrm{add}}$ denotes the class of all stationary randomized feedback policies $\pi:\mathbb R^d\times\mathcal P_2(\mathbb R^d)\to\mathcal P(\mathbb R^m)$ such that $\pi(\cdot\mid x,\mu)$ admits a density $p_\pi(x,\mu,\cdot)$ with respect to Lebesgue measure, $V^\pi(s,\mu)<\infty$ for every $(s,\mu)$, and, for every initial condition $(s,\mu)$, $\lim_{t\to\infty}e^{-\beta t}\bigl(\mathbb E|s_t|^2+m_2(\mu_t)\bigr)=0.$

\begin{remark}\label{rem:sufficient_conditions_Piadd}
A sufficient, but not necessary, condition to ensure that  $\pi\in\Pi_{\mathrm{add}}$ is that $b^\pi$, $\sigma^\pi$, and $r_\lambda^\pi$ satisfy Assumption \ref{A1}, and that the dissipativity estimate \eqref{eq:dissipativity_remark} holds. Thus, $\pi\in\Pi_{\mathrm{add}}$ by \Cref{thm:MF_eval_infinite}. In the LQ setting, Assumption \ref{A1} is ensured by assuming that $m_\pi\in\mathcal C^{2,2}$ with bounded derivatives, that $q_\pi$ and $\mathcal  H\bigl(\pi(\cdot\mid x,\mu)\bigr)$ belong to $\mathcal C_{\mathrm{poly}}^{2,2}$, and that $m_\pi$ has at most linear growth in $(x,\sqrt{m_2(\mu)})$, where
\[
m_\pi(x,\mu):=\int_{\mathcal A} a\,\pi(\d a\mid x,\mu),
\qquad
q_\pi(x,\mu):=\int_{\mathcal A} a^\top N a\,\pi(\d a\mid x,\mu),
\]
and $\mathcal H\bigl(\pi(\cdot\mid x,\mu)\bigr)$ is defined in \eqref{eq:reg_reward_avg}. A sufficient condition for \eqref{eq:dissipativity_remark} is that $2\,x^\top A x+\|Dx\|_F^2\le c_0-\alpha|x|^2$ for all $x\in\mathbb R^d,$ for some $c_0\ge0$ and some $\alpha>0$ such that the remaining terms involving $\bar A,\bar D,B,F$ and $m_\pi$ can be absorbed by Young's inequality.
\end{remark}

\begin{definition}[Stabilizability, detectability, and mean-square stabilizability]
\label{def:stab_and_detect}
The pair $(A,B)\in\mathbb R^{d\times d}\times\mathbb R^{d\times m}$ is
\emph{stabilizable} if there exists $\Theta\in\mathbb R^{m\times d}$ such that
$A+B\Theta$ is Hurwitz. The pair
$(A,H)\in\mathbb R^{d\times d}\times\mathbb R^{q\times d}$ is
\emph{detectable} if there exists $L\in\mathbb R^{d\times q}$ such that
$A-LH$ is Hurwitz.
\end{definition}

\noindent
\refstepcounter{Acond}\label{ass:LQR}%
\noindent\textbf{(\theAcond) Discounted LQ conditions.}
Let $A_\beta:=A-\frac{\beta}{2}I_d,$ and $
\tilde A_\beta:=A+\bar A-\frac{\beta}{2}I_d.$ We assume that the following conditions hold:
\begin{enumerate}
    \item[(H1)] $N\in\mathbb S_{++}^m$ and
    \[
    Q-I^\top N^{-1}I\in\mathbb S_{++}^d,
    \qquad
    Q+\bar Q-I^\top N^{-1}I\in\mathbb S_{++}^d.
    \]

    \item[(H2)] The pair $(\tilde A_\beta,B)$ is stabilizable, and there exist
$\Theta\in\mathbb R^{m\times d}$ and $P_0\in\mathbb S_{++}^d$ such that
\[
(A_\beta+B\Theta)^\top P_0
+
P_0(A_\beta+B\Theta)
+
(D+F\Theta)^\top P_0(D+F\Theta)
\preceq -I_d.
\]
\end{enumerate}
\begin{remark}
Assumption {\rm(H1)} implies in particular that the pairs $(A_\beta,(Q-I^\top N^{-1}I)^{1/2})
$ and $
(\tilde A_\beta,(Q+\bar Q-I^\top N^{-1}I)^{1/2})$ are detectable. On the other hand, when $D=F=0$, condition {\rm(H2)} is equivalent to assuming that both pairs $(A_\beta,B)
$ and $
(\tilde A_\beta,B)$ are stabilizable.
\end{remark}

Let us consider the quadratic function 
\begin{equation}\label{eq:V_candidate_pf2}
    V(s,\mu)
    =
    -
    \Bigl(
        (s-\bar\mu)^\top K(s-\bar\mu)
        +\bar\mu^\top\Lambda\,\bar\mu
        +2Y^\top s
        +R
    \Bigr),
\end{equation}
where $K,\Lambda\in\mathbb S_+^d$, $Y\in\mathbb R^d$, and $R\in\mathbb R$ satisfy the algebraic system
\begin{empheq}[left=\empheqlbrace]{align}
    \beta K
    &= Q+KA+A^\top K+D^\top K D-U^\top S^{-1}U,
    \label{eq:ARE_P}\\
    \beta\Lambda
    &= Q+\bar Q+\Lambda(A+\bar A)+(A+\bar A)^\top\Lambda
    +(D+\bar D)^\top K(D+\bar D)-W^\top S^{-1}W,
    \label{eq:ARE_Pi}\\
    \beta Y
    & = M+A^\top Y+D^\top K\gamma-U^\top S^{-1}O,
    \label{eq:ARE_Y}\\
    \beta R
    &= \gamma^\top K\gamma-O^\top S^{-1}O
    -\frac{\lambda}{2}\Bigl(m\log(\pi_0\lambda)-\log\det S\Bigr),
    \label{eq:ARE_r}
\end{empheq}
where 
\begin{equation}\label{eq:tilde_notation}
    S:=N+F^\top K F,
    \qquad
    O:=H+B^\top Y+F^\top K\gamma,
\end{equation}
and
\begin{equation}\label{eq:tilde_notation_2}
    U:=I+B^\top K+F^\top K D,
    \qquad
    W:=I+B^\top \Lambda+F^\top K(D+\bar D).
\end{equation}
Here $\pi_0$ denotes the mathematical constant.

\begin{proposition}\label{thm:LQ_infinite}
Assume \Cref{ass:LQR} and let $(K,\Lambda,Y,R)$ be a solution of
\eqref{eq:ARE_P}-\eqref{eq:ARE_r}. Let $V$ be given by
\eqref{eq:V_candidate_pf2}, and let $\pi^*$ be defined as
\begin{equation}\label{eq:pi_star}
    \pi^*(\cdot\mid s,\mu)
    =
    \mathcal N\!\left(
        -S^{-1}\bigl(Us+(W-U)\bar\mu+O\bigr),\;
        \frac{\lambda}{2}S^{-1}
    \right).
\end{equation}
Then
\[
V^*(s,\mu)=V^{\pi^*}(s,\mu)=V(s,\mu)
\qquad
\text{for all }(s,\mu)\in\mathbb R^d\times\mathcal P_2(\mathbb R^d),
\]
and $\pi^*$ is the unique optimal stationary feedback policy in $\Pi_{\mathrm{add}}$.
\end{proposition}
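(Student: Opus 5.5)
We argue by verification: show that the quadratic $V$ in \eqref{eq:V_candidate_pf2} satisfies the entropy-regularized HJB
\[
\beta V=\sup_{\pi}\Bigl\{\int_{\mathcal A}\bigl[r(s,\mu,a)-\lambda\log p_\pi(a)\bigr]\pi(\d a)+\mathcal L^{\pi}_{b,\Sigma}V(s,\mu)\Bigr\},
\]
with the supremum attained by $\pi^*$. Then compare $V$ with $V^\pi$ for every admissible $\pi$ via an It\^o--Lions argument on the coupled system \eqref{eq:rep_SDE}--\eqref{eq:MKV_SDE}.

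\textbf{Step 1 (derivatives and structure of the Hamiltonian).} Since $V$ depends on $\mu$ only through $\bar\mu$, it is cylindrical and \eqref{eq:deerivatives_of_cilindrical_func} gives:
\begin{itemize}
\item $\nabla_sV=-2K(s-\bar\mu)-2Y$ and $D^2_{ss}V=-2K$;
\item $\partial_\mu V(s,\mu)(\xi)=2K(s-\bar\mu)-2\Lambda\bar\mu$, which is independent of $\xi$, so that $D_\xi\partial_\mu V=0$.
\end{itemize}
Consequently the population part of $\mathcal L^\pi V$ reduces to $\bigl(2K(s-\bar\mu)-2\Lambda\bar\mu\bigr)\cdot\int b^\pi(\xi,\mu)\mu(\d\xi)$. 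This term is linear in the action averaged over the population, so no diffusion term in $\xi$ appears.

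The key observation is the following. In the decoupled formulation the optimizer acts simultaneously at $s$ (through the representative terms) and at every $\xi$ (through the population term). Performing the pointwise maximization over $\pi(\cdot\mid x,\mu)$, the integrand at a point $x$ combines the representative contribution evaluated at $x=s$ with the population contribution $\int\cdots\mu(\d\xi)$. I will rewrite the objective by decomposing the action and state into mean and fluctuation parts, $x=(x-\bar\mu)+\bar\mu$. This mirrors the structure of \eqref{eq:V_candidate_pf2}: the $K$-equation \eqref{eq:ARE_P} governs fluctuations and the $\Lambda$-equation \eqref{eq:ARE_Pi} governs the mean. This is also why $W$ contains $\Lambda$ and $K(D+\bar D)$.

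\textbf{Step 2 (Gibbs maximization).} For fixed $(x,\mu)$, the functional $\pi\mapsto\int[g(a)-\lambda\log p_\pi(a)]\pi(\d a)$ with $g$ concave quadratic, $g(a)=-a^\top Sa-2a^\top c+\dots$, is maximized uniquely by the Gibbs density $p\propto e^{g/\lambda}$. Here $S=N+F^\top KF\succ0$ by (H1), which gives $\mathcal N\bigl(-S^{-1}c,\tfrac{\lambda}{2}S^{-1}\bigr)$. The maximal value equals $\sup g+\frac{\lambda}{2}\bigl(m\log(\pi_0\lambda)-\log\det S\bigr)$, and this is exactly the log-det constant in \eqref{eq:ARE_r}. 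Identifying $c=Us+(W-U)\bar\mu+O$ and collecting the quadratic, cross, linear and constant terms in $(s-\bar\mu)$ and $\bar\mu$ then reproduces \eqref{eq:ARE_P}--\eqref{eq:ARE_r} by completing the square. This bookkeeping is routine but is where sign and cross-term errors are likely.

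\textbf{Step 3 (verification and uniqueness).} For $\pi\in\Pi_{\mathrm{add}}$, apply the It\^o--Lions formula to $e^{-\beta t}V(s_t,\mu_t)$ up to time $T$. The HJB inequality gives
\[
V(s,\mu)\ge\mathbb E\!\int_0^Te^{-\beta t}r_\lambda^\pi\,\d t+e^{-\beta T}\mathbb E\,V(s_T,\mu_T),
\]
with equality for $\pi^*$. The local martingale term is a true martingale because $V$ is quadratic and second moments are finite. Since $|V|\le C(1+|s|^2+m_2(\mu))$, the admissibility condition $e^{-\beta T}(\mathbb E|s_T|^2+m_2(\mu_T))\to0$ kills the terminal term, giving $V^\pi\le V=V^{\pi^*}$.

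\textbf{Main obstacle.} The hardest step is showing $\pi^*\in\Pi_{\mathrm{add}}$: the closed-loop mean and fluctuation dynamics must be exponentially stable at rate $\beta/2$ in mean square. I would use that $K,\Lambda\succeq0$ solve the AREs, with (H1) giving detectability, so that $K$ and $\Lambda$ serve as Lyapunov functions for $A_\beta-BS^{-1}U$ (with diffusion) and $\tilde A_\beta-BS^{-1}W$ respectively. Standard stochastic Riccati/Lyapunov arguments should then yield the decay, with (H2) ensuring these are the stabilizing solutions.

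Uniqueness follows from strict concavity in Step 2. If $\pi$ differs from $\pi^*$ on a set of positive occupancy measure, the HJB inequality is strict there, and the gap $V-V^\pi$ equals the integral of a positive (KL-type) term against $\rho^\pi$. Hence any optimal $\pi$ coincides with $\pi^*$ a.e.
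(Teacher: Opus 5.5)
Your plan is the paper's: verification with a KL gap, Lyapunov-based admissibility of $\pi^*$, and uniqueness from strict concavity. There is a genuine gap, however, and it sits exactly at the step you call the key observation.

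At a fixed $(s,\mu)$ the population part of $\mathcal L^\pi_{b,\Sigma}V$ is
\[
\bigl(2K(s-\bar\mu)-2\Lambda\bar\mu\bigr)\cdot\Bigl((A+\bar A)\bar\mu+B\int_{\R^d}m_\pi(\xi,\mu)\,\mu(\d\xi)\Bigr),
\qquad
m_\pi(\xi,\mu):=\int_{\mathcal A}a\,\pi(\d a\mid\xi,\mu).
\]
It therefore involves the kernel at points $\xi\neq s$, where neither the entropy nor the $a^\top Na$ penalty of $r^\pi_\lambda(s,\mu)$ acts. This has three consequences:
\begin{enumerate}
\item The supremum in your HJB is $+\infty$ as soon as $B^\top\bigl(K(s-\bar\mu)-\Lambda\bar\mu\bigr)\neq0$ and $\mu(\{s\})<1$.
\item The pointwise inequality $(\mathcal L^\pi_{b,\Sigma}-\beta)V+r^\pi_\lambda\le0$ that your Step 3 needs fails for admissible policies obtained from $\pi^*$ by shifting the mean on a $\mu$-non-null set away from $s$.
\item The two contributions cannot be merged into one Gibbs problem at $x=s$. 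The piece $2a^\top B^\top\Lambda\bar\mu$ of $2a^\top\eta$, with $\eta=Us+(W-U)\bar\mu+O$, comes from the population action, not from the action at $s$. Moreover, mixed terms in $s-\bar\mu$ and $\bar\mu$ (e.g.\ $-2(s-\bar\mu)^\top Q\bar\mu$ from $s^\top Qs$) are controlled by none of \eqref{eq:ARE_P}--\eqref{eq:ARE_r}.
\end{enumerate}

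A careful computation would not fix this, because the pointwise conclusion itself fails. Take $d=m=1$, $D=\bar D=F=0$, $I=M=H=0$, so that $Y=O=0$ and $S=N$. Plugging the aggregated coefficients of $\pi^*$ into \eqref{eq:L-pi} gives
\[
(\mathcal L^{\pi^*}_{b,\Sigma}-\beta)V+r^{\pi^*}_\lambda=-2\bigl(Q+B^2K\Lambda/N\bigr)(s-\bar\mu)\bar\mu .
\]
The $(s-\bar\mu)^2$, $\bar\mu^2$ and constant coefficients do vanish by \eqref{eq:ARE_P}, \eqref{eq:ARE_Pi}, \eqref{eq:ARE_r}. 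Since $\mathbb E[s_t-\bar\mu_t]$ and $\bar\mu_t$ are explicit exponentials, It\^o's formula then yields
\[
V^{\pi^*}(s,\mu)-V(s,\mu)=-\frac{2\bigl(Q+B^2K\Lambda/N\bigr)}{\beta-2A-\bar A+B^2(K+\Lambda)/N}\,(s-\bar\mu)\,\bar\mu .
\]
This is nonzero whenever $s\neq\bar\mu$ and $\bar\mu\neq0$; the denominator is positive by the Hurwitz property in \Cref{lem:ARE_full_stationary}. The paper's Step 4 performs the same merger under the words ``a direct computation'', so it offers no way around this.

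What your Gibbs/KL verification does establish is an averaged statement. Let the representative start from $s\sim\mu$, so that $\Law(s_t)=\mu_t$. Then:
\begin{enumerate}
\item the mixed terms integrate out, because $\int(x-\bar\mu)\,\mu(\d x)=0$;
\item the population term becomes $-2\int m_\pi(x,\mu)^\top B^\top\Lambda\bar\mu\,\mu(\d x)$;
\item the KL identity applies to the integrand pointwise in $x$ with exactly $\eta(x,\mu)=Ux+(W-U)\bar\mu+O$.
\end{enumerate}
The averaged linear terms also require \eqref{eq:ARE_Y} to be replaced by $\beta Y=M+(A+\bar A)^\top Y+(D+\bar D)^\top K\gamma-W^\top S^{-1}O$. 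With that change, the argument shows that $\int V(s,\mu)\,\mu(\d s)$ is the optimal population-level (standard MFC) value, attained by $\pi^*$. It does not give $V^*(s,\mu)=V^{\pi^*}(s,\mu)=V(s,\mu)$ pointwise.
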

The proof of \Cref{thm:LQ_infinite} is provided in \Cref{APP:PROOFS_SEC_LQR}.


\section{Numerical Examples}\label{sec:numerics}

\subsection{Example 1: Mean-field systemic risk model}
\label{subsec:systemic-risk}

Mean-field models of systemic risk describe a large population of interacting agents whose individual states are attracted to the current population average, while each agent can act through a control representing, for instance, borrowing or lending to the system; See \cite{CFS,carmona2018probabilistic1}. In our stationary discounted setting introduced in \Cref{sec:LQR}, this leads to a one-dimensional linear-quadratic specification in which the state is penalized through its deviation from the population mean and the control is penalized quadratically.

\subsubsection{Problem setup}

We consider the one-dimensional mean-field LQR dynamics in the particular systemic risk configuration, namely
\begin{equation*}
  \d s_t = \bigl(A (s_t-\bar\mu_t) + B a_t\bigr)\,\d t + \gamma\,\d B_t,\qquad a_t\sim \pi(\cdot\,|\,s_t,\mu_t),\qquad
  \bar\mu_t := \int_{\R} x\,\mu_t(\d x),
\end{equation*}
where $\mu_t=\Law(\tilde s_t)$ is the population law generated by the mean-field environment \eqref{eq:MKV_SDE}. This corresponds to the general framework of \Cref{sec:LQR} with $d=m=1,$ $\bar A=-A$, $D=\bar D=F=0,$ and $I=M=H=0.$ We take the running reward
\begin{equation}\label{eq:instant_reward_examples}
  r(x,\mu,a)
  =
  -\Bigl(
      Q(x-\bar\mu)^2+\bar Q\,\bar\mu^2+N a^2
    \Bigr),
\end{equation}
with $Q,\bar Q\ge0$ and $N>0$. Our goal is to maximize the expected long-run regularized reward
\begin{align}\label{eq:value_function_examples}
  V^{*}(s,\mu)
  \,:=\,
  \sup_{\pi\in \Pi_{\mathrm{add}}} \left\{ V^\pi(s,\mu):=\,
  \mathbb E\!\left[
    \int_0^\infty e^{-\beta t}\,
    r_\lambda^\pi(s_t,\mu_t)\,\d t
    \;\bigg|\;
    s_0=s,\ \mu_0=\mu
  \right]\right\} ,
\end{align}
for every $(s,\mu)\in\mathbb R^d\times\mathcal P_2(\mathbb R^d)$, 
where the entropy-regularized $r_\lambda^\pi(s,\mu)$ is as in \eqref{eq:reg_reward_avg} with $r(s,\mu,a)$ defined in \eqref{eq:instant_reward_examples}. Following \Cref{thm:LQ_infinite}, the optimal policy is given by
\begin{equation}\label{eq:optimal_policy_sistemic_risk}
  \pi^*(\cdot\mid s,\mu)
  =
  \mathcal N\!\left(
      -N^{-1}B^\top\bigl(K(s-\bar\mu)+\Lambda\,\bar\mu\bigr),
      \frac{\lambda}{2}N^{-1}
    \right).
\end{equation}
The corresponding optimal value function is
\begin{equation}\label{eq:optimal_value_sistemic_risk}
  V^*(s,\mu)=V^{\pi^*}(s,\mu)= -\Bigl((s-\bar\mu)^\top K(s-\bar\mu)+\bar\mu^\top\Lambda\,\bar\mu+R\Bigr),
\end{equation}
where $K,\Lambda\in\mathbb S_+^d$ and $R\in\mathbb R$ satisfy the reduced algebraic system
\begin{align*}
    \begin{cases}
         \beta K
  &= Q+KA+A^\top K-KBN^{-1}B^\top K,\\
  \beta\Lambda
  &= Q+\bar Q-\Lambda BN^{-1}B^\top\Lambda,\\
  \beta R
  &= \gamma^\top K\gamma-\frac{\lambda}{2}\Bigl(m\log(\pi_0\lambda)-\log\det N\Bigr).
    \end{cases}
\end{align*}

\subsubsection{Policy parametrization and value approximation}
Following the theoretical optimal policy \eqref{eq:optimal_policy_sistemic_risk}, we consider the family of Gaussian policies parametrized as 
\begin{equation}\label{eq:sysrisk-policy}
  \pi_\omega(\cdot \mid s,\mu)=\mathcal N\bigl(\omega_1(s-\bar\mu)+\omega_2\bar\mu,\,\lambda\bigr),
\end{equation}
where $\omega=(\omega_1,\omega_2)\in\R^2$. On the other hand, following \eqref{eq:optimal_value_sistemic_risk}, the value function can be approximated by the quadratic cylindrical ansatz
\begin{equation}\label{eq:sysrisk-Vn}
  V_\theta(s,\mu)
  =\theta_3\,(s-\bar\mu)^2 + \theta_2\,\bar\mu^2 + \theta_1,
  \qquad \theta=(\theta_1,\theta_2,\theta_3)\in\R^3,
\end{equation}
that is, following the notation of \eqref{eq:galerkin_apprimation}, we are taking $\phi_1(s,\mu)=1$, $\phi_2(s,\mu)=\bar\mu^2$, and $\phi_3(s,\mu)=(s-\bar\mu)^2$.

\subsubsection{Implementation and numerical results}

We implement the model-based actor--critic procedure described in \Cref{sec:policy-iteration-algorithm} for the scalar mean-field LQR example. The actor is parametrized by the Gaussian policy \eqref{eq:sysrisk-policy}, and the critic is sought in the quadratic space generated by \eqref{eq:sysrisk-Vn}. Throughout the experiment we use
$$
A=-1,\qquad B=1,\qquad Q=1,\qquad \bar Q=1,\qquad N=\tfrac12,\qquad \beta=1,\qquad \gamma=0.5,\qquad \lambda=0.2.
$$
The population is initialized with mean $\bar\mu_0=1$, and the initial representative state is sampled from $\mu_0=\mathcal N(1,1)$. Since the population dynamics are closed at the level of the mean $\bar\mu_t$, no auxiliary particle approximation of the population law is needed in this example. The representative state is sampled from the exact closed-loop Gaussian transition, while $\bar\mu_t$ is propagated through the corresponding deterministic mean equation. The discounted integrals are approximated on $[0,T]$, with $T=8$ and $\Delta t=0.05$. We use $100$ Monte Carlo trajectories per actor step, a constant learning rate $\alpha=5\times 10^{-2}$, and run $1000$ policy-improvement iterations.

For a fixed actor parameter $\omega=(\omega_1,\omega_2)$, the aggregated coefficients are
$$
b^{\pi_\omega}(s,\mu)=A(s-\bar\mu)+B\bigl(\omega_1(s-\bar\mu)+\omega_2\bar\mu\bigr),\qquad \Sigma^{\pi_\omega}(s,\mu)=\gamma^2.
$$
Thus, the Galerkin critic is computed by solving the linear system \eqref{eq:linear-system-policy-evaluation}, with $A(\omega)$ and $b(\omega)$ defined in \eqref{eq:definition_A_B_galerkin}, after replacing the general generator $\mathcal L_{b,\Sigma}^{\pi_\omega}$ by the present LQR generator. For the basis
$$
\phi_1(s,\mu)=1,\qquad \phi_2(s,\mu)=\bar\mu^2,\qquad \phi_3(s,\mu)=(s-\bar\mu)^2,
$$
one obtains
$$
\mathcal L_{b,\Sigma}^{\pi_\omega}\phi_1=0,\qquad \mathcal L_{b,\Sigma}^{\pi_\omega}\phi_2=2B\omega_2\,\bar\mu^2,\qquad \mathcal L_{b,\Sigma}^{\pi_\omega}\phi_3=2(A+B\omega_1)(s-\bar\mu)^2+\gamma^2.
$$  
At each actor step, the identities above are inserted into the Galerkin system \eqref{eq:linear-system-policy-evaluation}, with $A(\omega)$ and $b(\omega)$ defined in \eqref{eq:definition_A_B_galerkin}, to compute the critic coefficients $\theta^{(k)}$ and then update the actor through the policy-gradient direction in \eqref{eq:general_empirical_gradient}. From a computational viewpoint, in this benchmark the critic space has dimension three, so policy evaluation reduces to solving a $3\times3$ linear system. Moreover, the Gaussian structure of the policy allows the conditional expectation over the action variable in the actor update to be evaluated analytically. The remaining numerical cost comes from the empirical approximation of the discounted occupancy measure along simulated trajectories. Since the closed-loop dynamics are linear and Gaussian for each fixed actor parameter, these trajectories are sampled from the exact transition law, while the population mean is propagated through its deterministic closed equation.

The closed-form Riccati solution provides a benchmark for the optimal feedback and the optimal value function. \Cref{fig:value_LQR} compares the final learned policy mean and its induced value function with the Riccati benchmark for several fixed values of the population mean $\bar\mu$. \Cref{fig:actor_critic_training_LQR} reports the evolution of the critic and actor coefficients during training. The convergence of both sets of coefficients toward the Riccati values confirms that the HJB--Galerkin actor--critic iteration recovers the quadratic structure of the optimal solution in this benchmark example.

We also implement the undiscounted empirical measure $\overline{\mathfrak m}$ in~\eqref{eq:uni_weighted_measure}, which can be viewed as a biased empirical approximation of the discounted occupancy measure in~\eqref{eq:approx-gradient-direction}, since it assigns uniform rather than discounted weights along the finite time window. The corresponding results, shown in \Cref{fig:actor_critic_training_LQR_uni} with stepsize $\alpha=1\times 10^{-1}$, illustrate the same qualitative behavior as in the discounted case.

\begin{figure}
    \centering
    \includegraphics[width=0.8\linewidth]{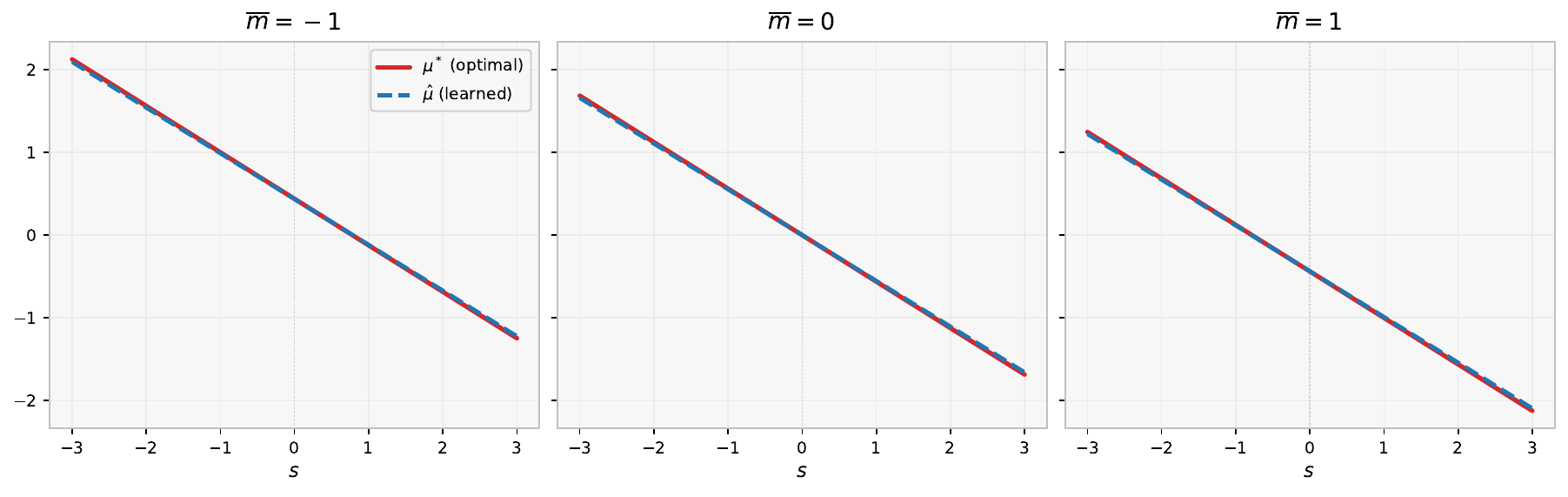}
    \includegraphics[width=0.8\linewidth]{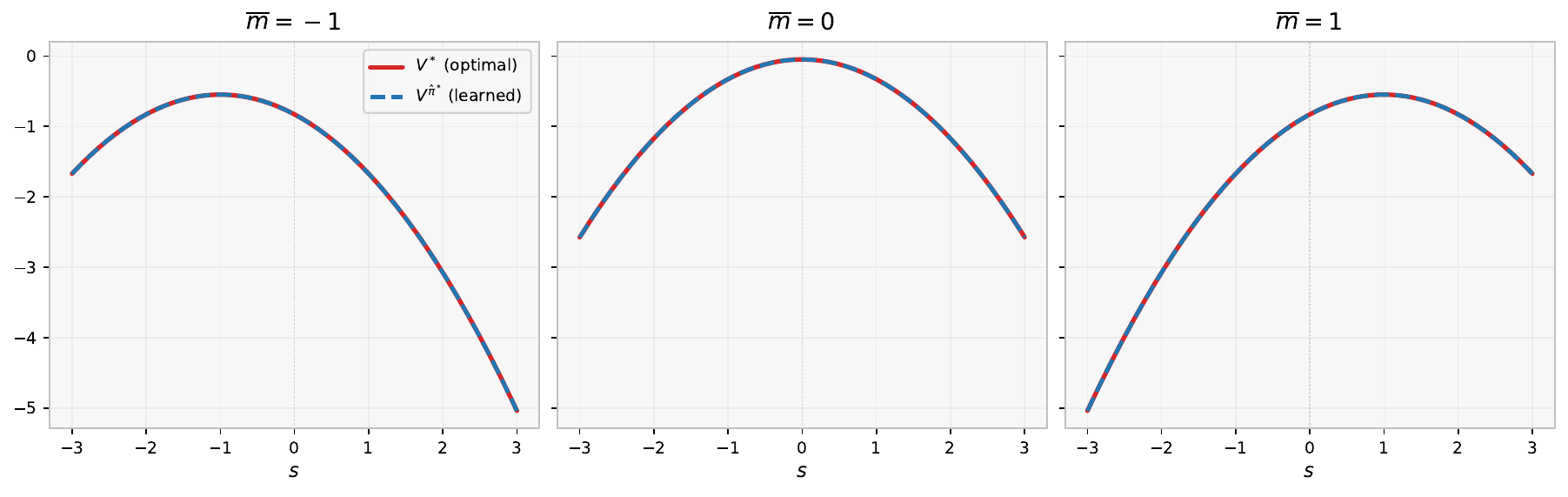}
    \caption{Comparison between the learned and optimal solutions for different values of the population mean $\bar\mu$. Top row: policy mean induced by the learned actor and optimal Riccati policy mean. Bottom row: value function associated with the learned policy and optimal Riccati value function.}
    \label{fig:value_LQR}
\end{figure}

\begin{figure}
    \centering
    \includegraphics[width=0.4\linewidth]{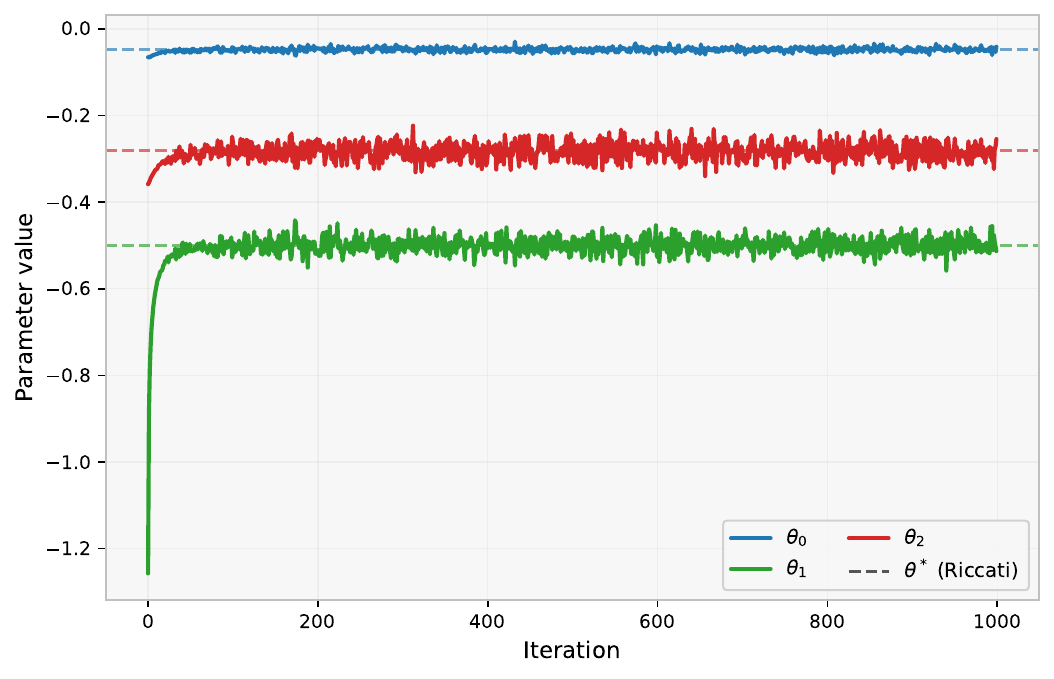}
    \includegraphics[width=0.4\linewidth]{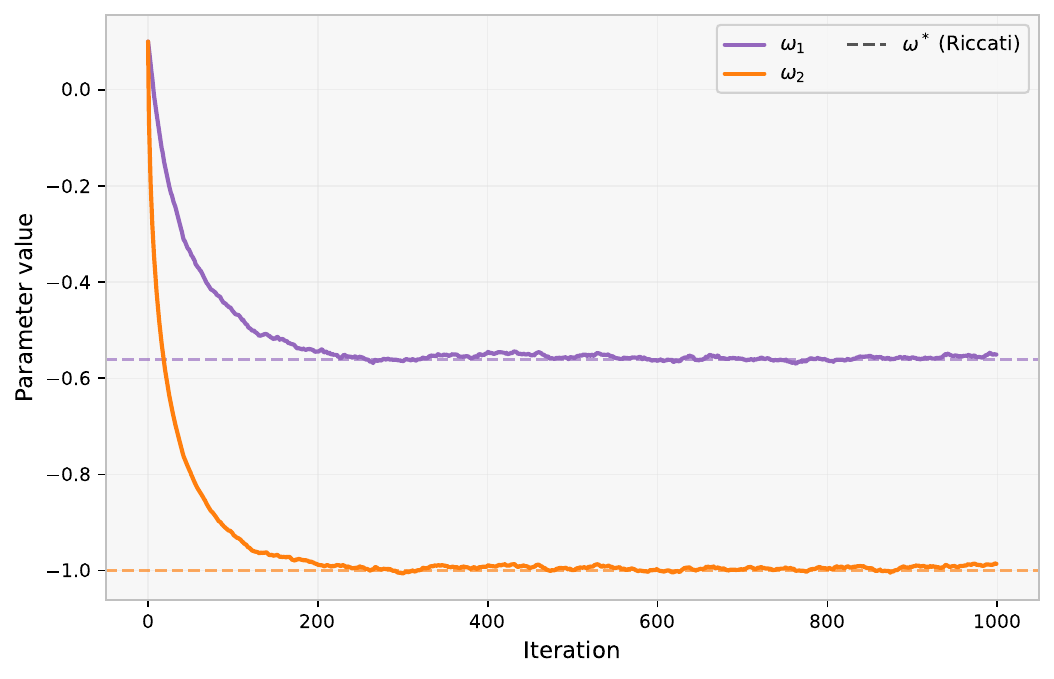}
    \caption{Convergence of the finite-dimensional parameters in the scalar mean-field LQR example. Left: Galerkin critic coefficients $\theta_k$ compared with the Riccati coefficients $\theta^*$. Right: actor coefficients $\omega_k$ compared with the optimal Riccati feedback coefficients $\omega^*$.}
    \label{fig:actor_critic_training_LQR}
\end{figure}

\begin{figure}
    \centering
    \includegraphics[width=0.4\linewidth]{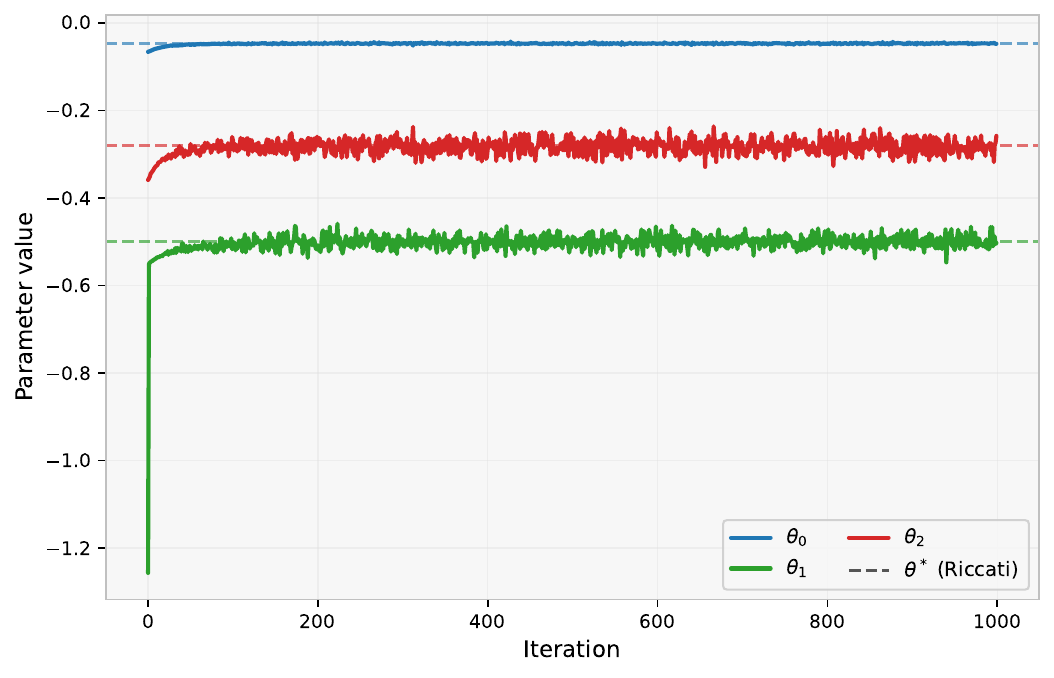}
    \includegraphics[width=0.4\linewidth]{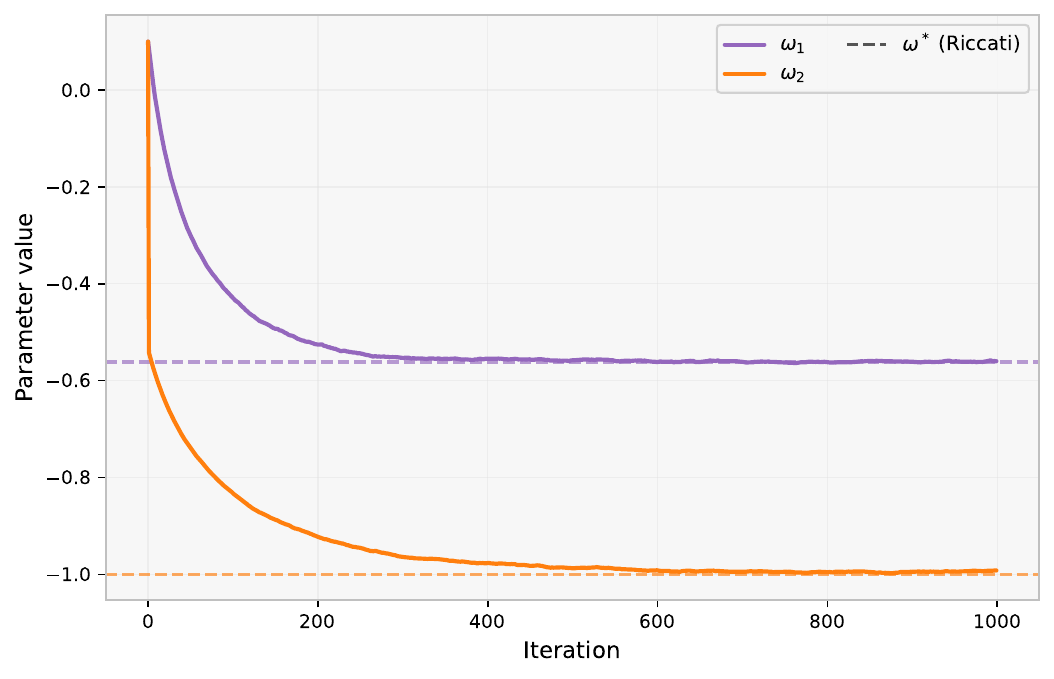}
    \caption{Convergence of the finite-dimensional parameters in the scalar mean-field LQR example with undiscounted measure $\overline{\mathfrak m}$. Left: Galerkin critic coefficients $\theta_k$ compared with the Riccati coefficients $\theta^*$. Right: actor coefficients $\omega_k$ compared with the optimal Riccati feedback coefficients $\omega^*$.}
    \label{fig:actor_critic_training_LQR_uni}
\end{figure}


\subsection{Example 2: Crowd-aversion transport}
The second example illustrates the actor--critic method in a nonlinear mean-field setting for which no closed-form Riccati benchmark is available. The objective is to transport a representative agent toward a prescribed target while penalizing trajectories that pass through regions of high crowd density. The crowd is described by a population law $\mu_t$, which is approximated numerically by an empirical measure generated by a finite particle system.

\subsubsection{Problem setup}

We consider the controlled diffusion on $\R^2$
\begin{equation*}
\d s_t = a_t\,\d t+\sigma\,\d B_t,
\qquad
a_t\sim\pi_\omega(\cdot\mid s_t,\mu_t),
\end{equation*}
where $a_t\in\R^2$ is the control, $\sigma>0$ is a constant volatility, and $\mu_t$ denotes the population distribution observed by the representative agent. The reward is defined by
\begin{equation}
\label{eq:running_cost_crowd_transport}
r(s,\mu,a)
=
-\left(
\frac12\|a\|^2
+\frac{\kappa}{2}\|s-s_{\mathrm{tar}}\|^2
+\gamma\,(K_\eta*\mu)(s)
+\frac{\rho}{2}\|s\|^2
\right),
\end{equation}
where $s_{\mathrm{tar}}\in\R^2$ is the target location, $\kappa>0$ controls the strength of the attraction to the target, $\gamma>0$ controls the crowd-aversion penalty, and $\rho\ge0$ is a confinement parameter. The crowd-density term is computed through the Gaussian kernel convolution
\begin{equation*}
(K_\eta*\mu)(s)
:=
\int_{\R^2}
\exp\left(-\frac{\|s-s'\|^2}{2\eta^2}\right)\,\mu(\d s'),
\end{equation*}
with bandwidth $\eta>0$. The goal is to maximize the corresponding infinite-horizon entropy-regularized objective \eqref{eq:value_fn} with reward given by~\eqref{eq:running_cost_crowd_transport}.

\begin{remark}
The four terms in \eqref{eq:running_cost_crowd_transport} have distinct roles. The first one penalizes large controls, the second one attracts the representative agent toward $s_{\mathrm{tar}}$, the third one penalizes positions located in high-density regions of the crowd, and the last one adds a mild confinement on the unbounded domain.
\end{remark}

\subsubsection{Policy parametrization and value approximation}
\label{subsec:policy_value_crowd_transport}

We use a Gaussian policy on $\R^2$ with fixed exploration variance,
\begin{equation*}
\pi_\omega(\cdot\mid s,\mu)
=
\mathcal N\bigl(m_\omega(s,\mu),\sigma_a^2 I_2\bigr),
\end{equation*}
where the mean is parametrized as $m_\omega(s,\mu)=Wf(s,\mu)$, with $\omega\equiv W\in\R^{2\times d_f}$. In order to generate trajectories without an Euler--Maruyama approximation, we use an affine feature map depending on the state and on the empirical population mean,
\begin{equation*}
f(s,\mu)
=
\bigl(
1,\,
s_1,\,
s_2,\,
s_1-s_{\mathrm{tar},1},\,
s_2-s_{\mathrm{tar},2},\,
\bar\mu_1,\,
\bar\mu_2
\bigr),
\qquad
\bar\mu:=\int_{\R^2} x\,\mu(\d x).
\end{equation*}
With this choice, the finite-particle approximation of the aggregated representative--population system is an affine linear diffusion. Hence, at each actor--critic iteration, the training trajectories are sampled from the exact Gaussian transition of this affine system, computed through the corresponding matrix exponential.

The critic is constructed as in the Galerkin policy-evaluation procedure of \Cref{subsec:galerkin-policy-evaluation}. To represent the dependence on the population law, we use kernel-embedding features
\begin{equation*}
u_j(\mu)
:=
\int_{\R^2}
\exp\left(-\frac{\|x-c_j\|^2}{2h^2}\right)\,\mu(\d x),
\qquad
j=1,\dots,M_\mu,
\end{equation*}
where $\{c_j\}_{j=1}^{M_\mu}\subset\R^2$ are fixed centers and $h>0$ is the bandwidth. For the state variable we use Gaussian radial basis functions $\psi_m(s)=\exp(-\|s-z_m\|^2/(2s_x^2))$, $m=1,\dots,M_s$. The critic basis is then given by
\begin{equation*}
\phi_{m,0}(s,\mu)=\psi_m(s),
\qquad
\phi_{m,j}(s,\mu)=\psi_m(s)\,u_j(\mu),
\qquad
m=1,\dots,M_s,\quad j=1,\dots,M_\mu.
\end{equation*}
Thus, with $n=M_s(M_\mu+1)$, the value function is approximated in $\mathbb V_n=\mathrm{span}\{\phi_1,\dots,\phi_n\}$ by $V_n^{\pi_\omega}(s,\mu)=\theta^\top\Phi(s,\mu)$.

\subsubsection{Implementation and numerical results}

The implementation follows the actor--critic algorithm of \Cref{sec:policy-iteration-algorithm}. For each actor parameter $W$, the critic is obtained by solving the Galerkin system \eqref{eq:linear-system-policy-evaluation}, with $A(W)$ and $b(W)$ defined as in \eqref{eq:definition_A_B_galerkin}, after replacing the abstract generator $\mathcal L_{b,\Sigma}^{\pi_\omega}$ by the known generator of the crowd-aversion dynamics. The resulting critic $V_n^{\pi_\omega}$ is then inserted into the empirical policy-gradient formula \eqref{eq:general_empirical_gradient} to update the actor.

The numerical parameters are
\begin{equation*}
\sigma=0.2,\qquad
\beta=0.1,\qquad
\kappa=1000,\qquad
\gamma=10,\qquad
\eta=0.8,\qquad
\rho=0.1.
\end{equation*}
The entropy parameter is $\lambda=2$, the exploration standard deviation is $\sigma_a=0.25$, and the target is $s_{\mathrm{tar}}=(2,0)$. In the reported experiment, both the initial population law and the initial representative law are centered near $(-2,0)$, with Gaussian perturbations of standard deviation $0.02$. The implementation also allows these two initial laws to be chosen independently.

The population law is approximated by an empirical measure
\begin{equation}\label{eq:empirical_estimation_mu_N}
\widehat\mu_t^N
=
\frac1N\sum_{i=1}^N\delta_{X_t^i}.
\end{equation}
Accordingly, the crowd-density term is evaluated as
\begin{equation*}
(K_\eta*\widehat\mu_t^N)(s_t)
=
\frac1N\sum_{i=1}^N
\exp\left(-\frac{\|s_t-X_t^i\|^2}{2\eta^2}\right).
\end{equation*}
From an implementation viewpoint, the main additional cost in this crowd-aversion example comes from the particle approximation of the population law. Since $\widehat\mu_t^N$ is represented by the empirical measure in \eqref{eq:empirical_estimation_mu_N}, several realizations of the coupled representative--population dynamics must be generated to assemble the Galerkin system and evaluate the policy-gradient direction. Nevertheless, the implementation exploits the affine structure of the actor with respect to the state and the empirical population mean, which allows us to sample the coupled system from its exact Gaussian transition.

At each actor--critic iteration we generate $L=24$ independent simulations, each with $N=64$ particles representing the crowd. The discounted objective is truncated at $T=5$ and evaluated on the time grid $\Delta t=0.05$. We run $500$ actor updates with constant stepsize $\alpha=5\times 10^{-4}$. The actor matrix is initialized at zero, its entries are clipped for stability, and the Galerkin system is solved with ridge parameter $10^{-2}$. The RBF centers for both the state and measure features are placed on a $5\times5$ Cartesian grid on $[-8,4]^2$. State and particle locations are clipped only when evaluating the RBF features; the simulated dynamics themselves are not clipped.

Unlike the LQR example, this nonlinear crowd-aversion problem does not admit a closed-form optimal policy or value function. We therefore assess the method through diagnostics that measure stability and qualitative performance. \Cref{fig:crowd_aversion_stability} reports the estimated regularized reward and the norms of the actor, critic, and policy-gradient direction along training. \Cref{fig:crowd_aversion_trajectories} shows the trajectories generated by the final learned policy. The reward curve provides a numerical indication of policy improvement, while the parameter and gradient norms monitor the stability of the actor--critic iteration. The trajectory plot illustrates whether the learned policy transports the representative agent from the initial region near $(-2,0)$ toward the target while accounting for the crowd-aversion term. Finally, \Cref{fig:crowd_aversion_stability_undiscounted} illustrate the convergence when the undiscounted measure $\bar{\mathrm{m}}$, defined in \eqref{eq:uni_weighted_measure}, is considered.  Note that its qualitative behavior is similar to the discounted case shown in \Cref{fig:crowd_aversion_stability}.

\begin{figure}
    \centering
    \includegraphics[width=0.4\linewidth]{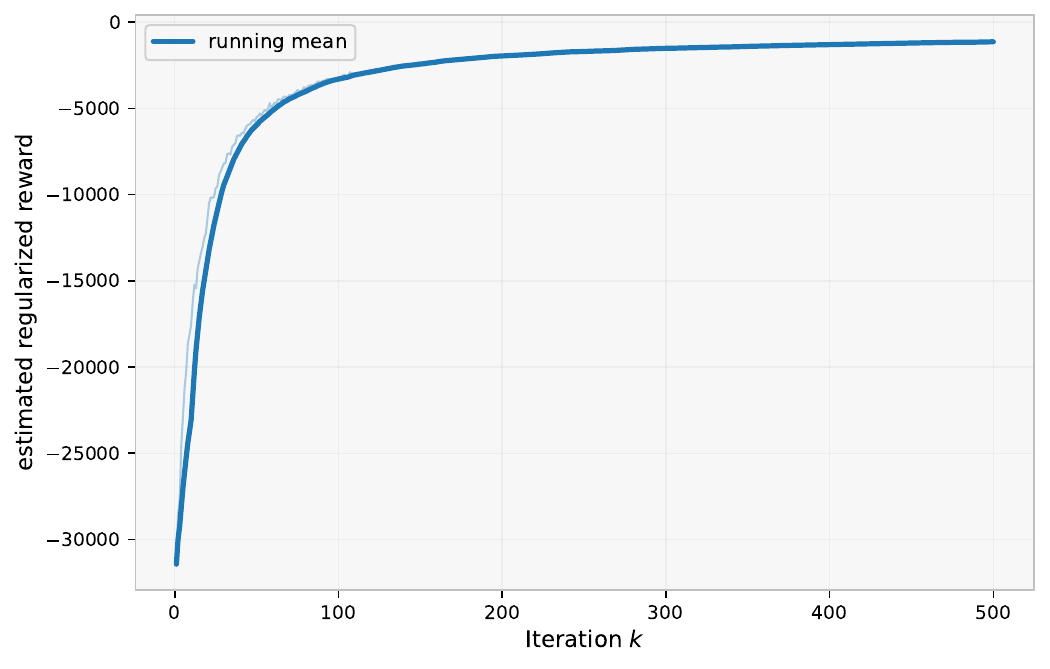}
    \includegraphics[width=0.4\linewidth]{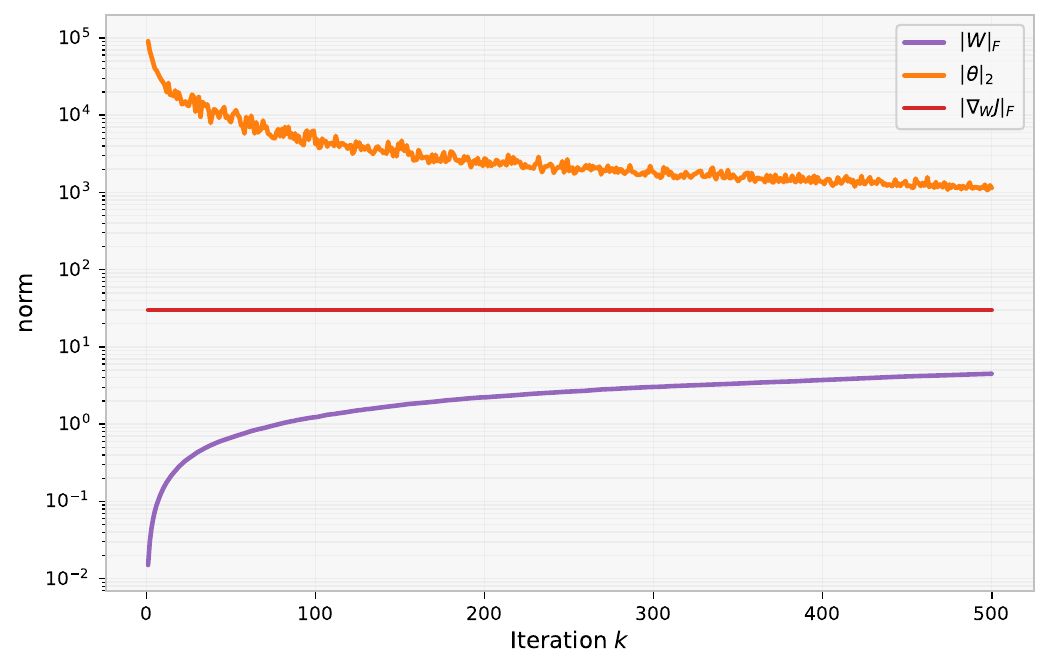}
    \caption{Training diagnostics for the crowd-aversion transport example. Left: estimated entropy-regularized reward along the actor--critic iterations. Right: norms of the actor parameters, critic coefficients, and policy-gradient direction.}
    \label{fig:crowd_aversion_stability}
\end{figure}

\begin{figure}
    \centering
    \includegraphics[width=0.4\linewidth]{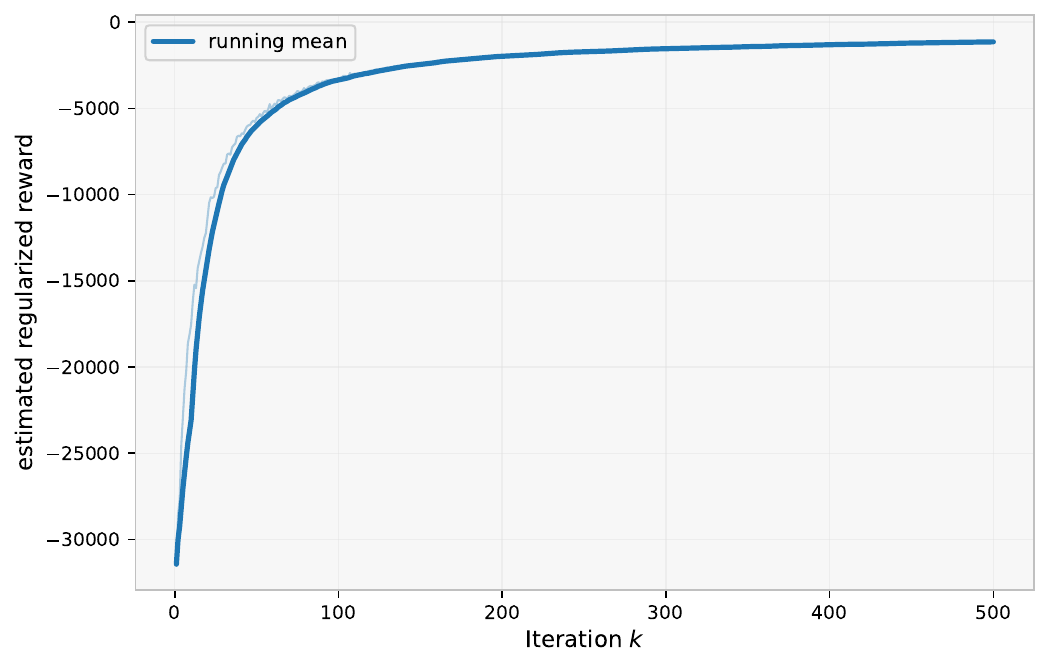}
    \includegraphics[width=0.4\linewidth]{images_paper_1/crowd_parameter_norms_known_dynamics_undiscounted.pdf}
    \caption{Training diagnostics for the crowd-aversion transport example using the undiscounted occupancy measure. Left: estimated entropy-regularized reward along the actor--critic iterations. Right: norms of the actor parameters, critic coefficients, and policy-gradient direction.}\label{fig:crowd_aversion_stability_undiscounted}
\end{figure}

\begin{figure}[H]
    \centering
    \includegraphics[width=0.4\linewidth]{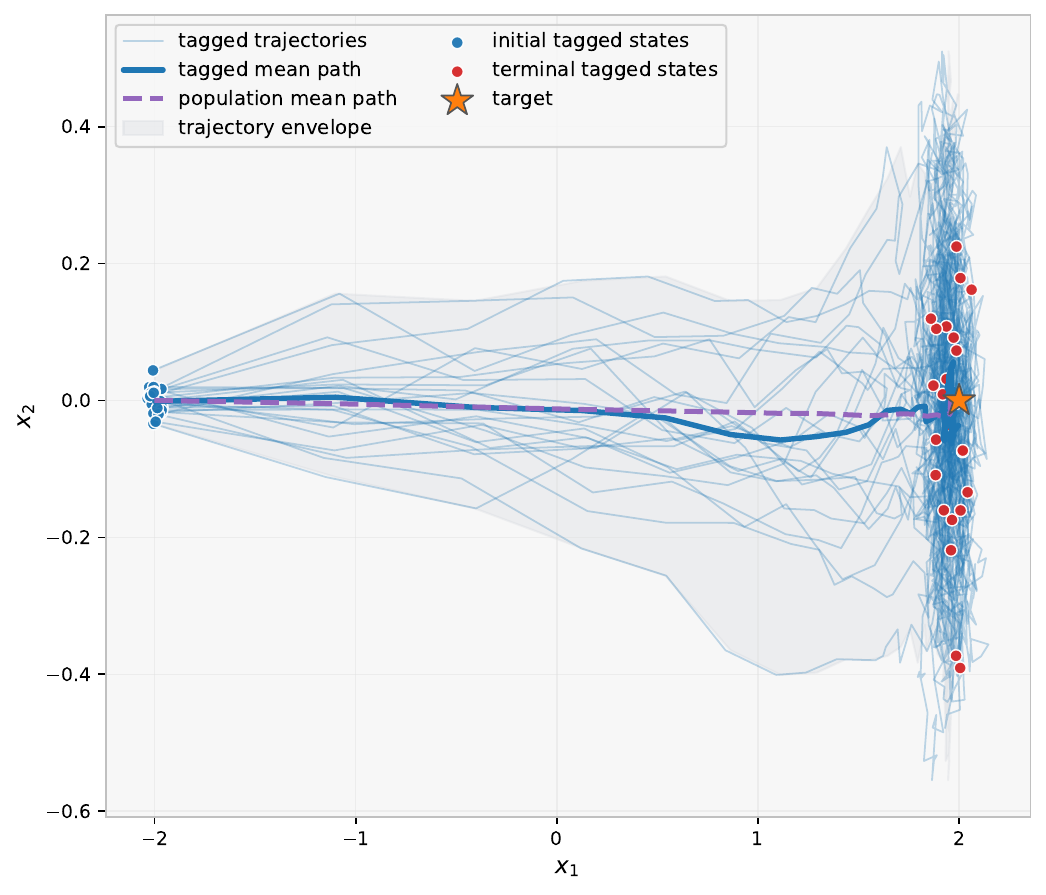}
    \caption{Simulated trajectories after training. The plot shows representative trajectories under the learned policy, together with the empirical population mean path and the target location.}\label{fig:crowd_aversion_trajectories}
\end{figure}

\section{Acknowledgements}
E. Bayraktar is supported in part by the NSF grants DMS-2507940, and 2406232, and in part by the Susan M. Smith Professorship. Y. Zhu and M. HERNANDEZ is supported in part by the NSF grants No 2529107.




\appendix

 \appendix
\section{Proofs of the Main Results}\label{proof_of_results}

\subsection{Proofs for \Cref{sec:HJB}}\label{APP:PROOFS_SEC_HJB}

We first establish well-posedness of the system together with moment
estimates that are explicit in time. Estimates of this type are
standard in the literature; see, for instance, \cite{BayraktarCossoPham2018,BuckdahnLiPengRainer2017}. However, for the infinite-horizon
argument we need bounds with an explicit exponential dependence on $t$, so that they can be combined with the discount factor $e^{-\beta t}$ when passing to the limit as $T\to\infty$. A generic finite-horizon
bound of the form $C_T$, with no explicit control of its dependence on
$T$, would not be sufficient for this purpose. We therefore state and
prove the corresponding estimates for general $p$-moments.

\begin{lemma}[Well-posedness and exponential moment growth]
  \label[lemma]{lem:moment-growth-clean}
  Suppose Assumption \ref{A1} holds.
  Fix an even integer $p \geq 2$.
  Then, for every $(s,\mu) \in \mathbb{R}^d \times
  \mathcal{P}_p(\mathbb{R}^d)$, the system
  \eqref{eq:rep_SDE}-\eqref{eq:MKV_SDE} admits a unique strong
  solution on $[0,\infty)$. Moreover, there exist constants
  $C_p \geq 1$ and $\beta_0(p) > 0$, depending only on $p$ and the
  structural constants in Assumption \ref{A1}, such that for all
  $t \geq 0$,
  \begin{equation}\label{eq:moment-exp}
    \mathbb{E}\!\left[|s_t|^p\right]
    + \mathbb{E}\!\left[|\tilde{s}_t|^p\right]
    \leq C_p\,e^{\beta_0(p)\,t}
    \bigl(1 + |s|^p + m_p(\mu)\bigr).
  \end{equation}
  In particular, for $p = 4$, for each $T > 0$ there exists $C_T > 0$
  such that
  \begin{align}\label{eq:finite_T_estimation_s}
    \sup_{0 \leq t \leq T}
    \Bigl(
    \mathbb{E}^{s,\mu,\pi}\!\left[|s_t|^4\right]
    + \mathbb{E}^{s,\mu,\pi}\!\left[|\tilde{s}_t|^4\right]
    \Bigr)
    \leq C_p e^{\beta_0(p)T}\bigl(1 + |s|^4 + m_4(\mu)\bigr),
  \end{align}
  and $\sup_{0 \leq t \leq T} m_2(\mu_t) \leq C_T(1 + m_4(\mu)^{1/2})$.
\end{lemma}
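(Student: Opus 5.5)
Our plan splits into three steps: well-posedness first, then an Itô--Gronwall moment estimate run on the population equation and then on the representative equation, and finally the finite-horizon consequences.

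\emph{Well-posedness.} By Remark~\ref{rem:linear_growth}, the coefficients $b^\pi,\sigma^\pi$ are globally Lipschitz in $(x,\mu)$ with respect to $|\cdot|+\mathcal W_2$. They also satisfy the linear growth bound \eqref{eq:linear_growth_bpi}. Under these conditions the McKean--Vlasov equation \eqref{eq:MKV_SDE} has a unique strong solution on every $[0,T]$ for $\tilde s_0\in L^2$; we invoke \cite[Chapter 4]{carbook}, or argue directly by a Picard/contraction argument on $C([0,T];\mathcal P_2)$. Uniqueness makes the solutions consistent across horizons, which gives a solution on $[0,\infty)$. Once the deterministic flow $(\mu_t)$ is fixed, \eqref{eq:rep_SDE} is a classical SDE with time-dependent coefficients that are Lipschitz in $x$ and of linear growth, so it also has a unique strong solution.

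\emph{Moment estimate.} We apply Itô's formula to $|\tilde s_t|^p$ with $p$ even; evenness makes $x\mapsto|x|^p$ a smooth polynomial. This gives
\[
\d|\tilde s_t|^p = p|\tilde s_t|^{p-2}\tilde s_t\cdot b^\pi\,\d t+\tfrac p2|\tilde s_t|^{p-2}\|\sigma^\pi\|_F^2\,\d t+\tfrac{p(p-2)}2|\tilde s_t|^{p-4}|(\sigma^\pi)^\top\tilde s_t|^2\,\d t+\d M_t .
\]
Inserting \eqref{eq:linear_growth_bpi}, the drift terms are bounded by $C(p,d,K_\pi)\,|\tilde s_t|^{p-2}(1+|\tilde s_t|^2+m_2(\mu_t))$. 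Young's inequality then bounds this by $C(1+|\tilde s_t|^p+m_2(\mu_t)^{p/2})$. Jensen's inequality gives $m_2(\mu_t)^{p/2}\le m_p(\mu_t)=\mathbb E|\tilde s_t|^p$, so the measure term closes on itself.

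To handle the local martingale $M$, we localize with stopping times $\tau_n=\inf\{t:|\tilde s_t|\ge n\}$ and take expectations. We then pass $n\to\infty$ by Fatou's lemma; this is legitimate because standard finite-horizon $L^p$ bounds already make $\sup_{[0,T]}\mathbb E|\tilde s_t|^p$ finite. The resulting inequality is $u(t)\le m_p(\mu)+\int_0^t C(1+2u)$ with $u(t)=\mathbb E|\tilde s_t|^p$. Gronwall's lemma yields $u(t)\le (1+m_p(\mu))e^{\beta_0 t}$ with $\beta_0=2C$, an explicit rate depending only on $p,d,K_\pi$.

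The same computation for $s_t$ produces a source term $m_2(\mu_t)^{p/2}\le u(t)$, which is already controlled. Gronwall's lemma again gives $\mathbb E|s_t|^p\le C_p e^{\beta_0 t}(1+|s|^p+m_p(\mu))$, after enlarging $\beta_0$ if needed. Adding the two bounds proves \eqref{eq:moment-exp}.

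\emph{Consequences.} The bound \eqref{eq:finite_T_estimation_s} follows at once by taking $p=4$ and bounding $e^{\beta_0 t}\le e^{\beta_0 T}$. For the last claim we use Jensen: $m_2(\mu_t)\le m_4(\mu_t)^{1/2}\le C_T(1+m_4(\mu))^{1/2}\le C_T(1+m_4(\mu)^{1/2})$.

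The main obstacle is keeping every constant explicit and independent of the horizon, so that the exponential rate $\beta_0(p)$ is a clean function of $p$, $d$ and $K_\pi$. This requires checking carefully that the localization/Fatou step introduces no $T$-dependent constant. It also requires that the nonlocal term $m_2(\mu_t)$ be closed through the Jensen bound rather than through any a priori finite-horizon estimate.
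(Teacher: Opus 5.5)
Your proposal is correct and follows essentially the same route as the paper: Lipschitz well-posedness for the McKean--Vlasov equation and then for the representative SDE with the flow frozen, then a localized It\^o formula for $|x|^p$ using the growth bound \eqref{eq:linear_growth_bpi} and Young's inequality. The nonlocal term is closed via $m_2(\mu_t)^{p/2}\le \mathbb E|\tilde s_t|^p$, Gr\"onwall gives the rate $\beta_0=2C_p$, and the same argument for $s_t$ plus Jensen yields the $p=4$ consequences. The only minor difference is in the localization step: the paper runs Gr\"onwall on $z(t)=\sup_n \mathbb E|\tilde s_{t\wedge\tau_n}|^p$ after using Fatou to bound $\mathbb E|\tilde s_u|^p\le z(u)$, whereas you invoke a priori finite-horizon $L^p$ bounds to justify Fatou and Gr\"onwall directly, which makes explicit a finiteness the paper's Gr\"onwall step uses tacitly.
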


\begin{proof}

\noindent\textbf{Step 1 (Existence and uniqueness).}
Under Assumption \ref{A1}, the McKean-Vlasov SDE \eqref{eq:MKV_SDE}
has globally Lipschitz and linearly growing coefficients
$b^\pi$ and $\sigma^\pi$; see Remark \ref{rem:linear_growth}.
Hence, by the standard well-posedness theory for McKean-Vlasov SDEs,
\eqref{eq:MKV_SDE} admits a unique strong solution $\tilde s$ for
every initial law $\mu \in \mathcal{P}_2(\mathbb{R}^d)$.
The population flow $\mu_t := \mathcal{L}(\tilde s_t)$ is therefore
well defined.
Conditioning on the deterministic flow $(\mu_t)_{t\ge0}$, the
representative equation \eqref{eq:rep_SDE} is an SDE with
progressively measurable coefficients, globally Lipschitz in $s$ and
with linear growth, so it also admits a unique strong solution.

\smallbreak
\noindent\textbf{Step 2 ($p$-moment bound for $\tilde s_t$).}
Let $f(x)=|x|^p$. Since $p\ge2$ is even, $f\in C^2(\mathbb{R}^d)$ and
\[
  \nabla f(x)=p|x|^{p-2}x,
  \qquad
  D^2f(x)=p(p-2)|x|^{p-4}xx^\top+p|x|^{p-2}I_d .
\]
For $n\ge1$, define the stopping time $\tau_n:=\inf\{t\ge0:\,|\tilde s_t|>n\}.$
Applying It\^o's formula to $f(\tilde s_t)$ on $[0,t\wedge\tau_n]$
gives
\begin{align}\label{eq:ito_tothe_powerp}
  |\tilde{s}_{t \wedge \tau_n}|^p
  = |\tilde{s}_0|^p
  + \int_0^{t \wedge \tau_n}
    \Bigl[
      \nabla f(\tilde{s}_u) \cdot b^\pi(\tilde{s}_u,\mu_u)
      + \tfrac{1}{2} D^2 f(\tilde{s}_u) : \Sigma^\pi(\tilde{s}_u,\mu_u)
    \Bigr] \d u
  + M_{t \wedge \tau_n},
\end{align}
where $ M_t$ is a local martingale,
hence $\mathbb E[M_{t\wedge\tau_n}]=0$. Using \eqref{eq:linear_growth_bpi}, Cauchy-Schwarz, and Young's
inequality, there exists a constant $C_p>0$, depending only on
$p$ and $K_\pi$, such that for all $(x,\mu)\in\mathbb{R}^d\times
\mathcal{P}_2(\mathbb{R}^d)$,
\begin{equation}\label{eq:estima_f_nabla-D2_f}
  \begin{aligned}
    |\nabla f(x)\cdot b^\pi(x,\mu)|
    \le C_p\bigl(1+|x|^p+m_2(\mu)^{p/2}\bigr),\quad
    D^2f(x):\Sigma^\pi(x,\mu)
    \le C_p\bigl(1+|x|^p+m_2(\mu)^{p/2}\bigr).
  \end{aligned}
\end{equation}
Taking expectations in \eqref{eq:ito_tothe_powerp} yields
\[
  \mathbb E\!\left[|\tilde s_{t\wedge\tau_n}|^p\right]
  \le m_p(\mu)
  + C_p\int_0^t
    \Bigl(
      1+\mathbb E\!\left[|\tilde s_{u\wedge\tau_n}|^p\right]
      + m_2(\mu_u)^{p/2}
    \Bigr)\,\d u.
\]
Set $ y_n(t):=\mathbb E\!\left[|\tilde s_{t\wedge\tau_n}|^p\right]$, $y(t):=\mathbb E\!\left[|\tilde s_t|^p\right],$ and  $z(t):=\sup_{n\ge1} y_n(t).$ Since $\mu_u=\mathcal{L}(\tilde s_u)$, H\"older's inequality gives
\[
  m_2(\mu_u)^{p/2}
  =\Bigl(\mathbb E[|\tilde s_u|^2]\Bigr)^{p/2}
  \le \mathbb E[|\tilde s_u|^p]
  = y(u).
\]
Therefore, for every $n\ge1$,
\begin{align}\label{eq:diff_estimation_stild}
  y_n(t)
  \le m_p(\mu)
  + C_p\int_0^t \bigl(1+y_n(u)+y(u)\bigr)\,\d u.
\end{align}
Now, for each fixed $u\ge0$, Fatou's lemma implies $y(u)
  = \mathbb E\!\left[\lim_{n\to\infty} |\tilde s_{u\wedge\tau_n}|^p\right]
  \le \liminf_{n\to\infty} y_n(u)
  \le z(u).$ Hence \eqref{eq:diff_estimation_stild} implies
\[
  y_n(t)
  \le m_p(\mu) + C_p\int_0^t \bigl(1+2z(u)\bigr)\,\d u,
  \qquad n\ge1.
\]
Taking the supremum over $n$ gives
\[
  z(t)
  \le m_p(\mu) + C_p\int_0^t \bigl(1+2z(u)\bigr)\,\d u.
\]
By Gr\"onwall's inequality, $z(t)\le \bigl(m_p(\mu)+1\bigr)e^{2C_p t}$ for $t\ge0$. Since $y(t)\le z(t)$, we conclude that
\begin{align}\label{eq:estimation_tilds_moment}
  \mathbb E\!\left[|\tilde s_t|^p\right]
  \le \bigl(m_p(\mu)+1\bigr)e^{\beta_0 t},
  \qquad \beta_0:=2C_p.
\end{align}

\smallbreak
\noindent\textbf{Step 3 ($p$-moment bound for $s_t$).}
Define $ \sigma_n:=\inf\{t\ge0:\,|s_t|>n\},$ and $ v_n(t):=\mathbb E\!\left[|s_{t\wedge\sigma_n}|^p\right].$ Repeating the same localized It\^o argument for \eqref{eq:rep_SDE},
and using \eqref{eq:estimation_tilds_moment} to control
$m_2(\mu_t)^{p/2}$, we obtain
\begin{align*}
  v_n(t)
  \le |s|^p
  + C_p\int_0^t \bigl(1+v_n(u)\bigr)\,\d u
  + C_p\int_0^t \mathbb E\!\left[|\tilde s_u|^p\right]\,\d u.
\end{align*}
Using \eqref{eq:estimation_tilds_moment}, it follows that
\[
  v_n(t)
  \le |s|^p
  + C_p\int_0^t \bigl(1+v_n(u)\bigr)\,\d u
  + C_p\bigl(m_p(\mu)+1\bigr)\int_0^t e^{\beta_0 u}\,\d u .
\]
Another application of Gr\"onwall's inequality yields
\[
  v_n(t)
  \le C_p e^{\beta_0 t}\bigl(1+|s|^p+m_p(\mu)\bigr),
  \qquad t\ge0,
\]
after enlarging $C_p$ if necessary. Letting $n\to\infty$ and using
Fatou's lemma gives
\begin{align}\label{eq:moment_bound_st}
  \mathbb E\!\left[|s_t|^p\right]
  \le C_p e^{\beta_0 t}\bigl(1+|s|^p+m_p(\mu)\bigr).
\end{align}

\smallbreak
\noindent\textbf{Step 4 (Conclusion).}
Combining \eqref{eq:estimation_tilds_moment} and
\eqref{eq:moment_bound_st}, and enlarging the constant if necessary,
we obtain \eqref{eq:moment-exp}.
For $p=4$, the same argument gives the finite-horizon estimate
\eqref{eq:finite_T_estimation_s}. Finally, for $t\in[0,T]$,
\[
  m_2(\mu_t)
  = \mathbb E[|\tilde s_t|^2]
  \le \Bigl(\mathbb E[|\tilde s_t|^4]\Bigr)^{1/2}
  \le C_T\bigl(1+m_4(\mu)^{1/2}\bigr),
\]
which completes the proof.
\end{proof}

\begin{remark}[Explicit value of $\beta_0(2)$]\label{remark_explicit_beta_02}
  Taking $p = 2$ in Lemma \ref{lem:moment-growth-clean} and applying
  Young's inequality to \eqref{eq:estima_f_nabla-D2_f}, using
  $(u+v+w)^2 \leq 3(u^2+v^2+w^2)$ and the bound
  \eqref{eq:linear_growth_bpi}, one obtains
  \[
    2s \cdot b^\pi(s,\mu) \leq K_\pi + 4K_\pi|s|^2 + K_\pi\,m_2(\mu),
    \qquad
    \|\sigma^\pi(s,\mu)\|^2 \leq 3K_\pi^2\bigl(1 + |s|^2 + m_2(\mu)\bigr),
  \]
  from which \eqref{eq:diff_estimation_stild} with $p = 2$ becomes
  \[
    \frac{\d}{\d t}\mathbb{E}[|\tilde{s}_t|^2]
    \leq (K_\pi + 3K_\pi^2) + (5K_\pi + 6K_\pi^2)\,\mathbb{E}[|\tilde{s}_t|^2].
  \]
  Therefore $\beta_0 := \beta_0(2) = 5K_\pi + 6K_\pi^2$.
\end{remark}

\begin{remark}[A dissipative case]
  \label{rem:dissipative_second_moment}
  Assume that there exist constants $\kappa>C_{\mathrm{diss}}>0$ and
  $C_0\ge0$ such that, for all
  $(s,\mu)\in\mathbb R^d\times\mathcal P_2(\mathbb R^d)$, \eqref{eq:dissipativity_remark} holds. 
  Applying It\^o's formula to $|\tilde s_t|^2$, one obtains 
  \[
    \frac{\d}{\d t}m_2(\mu_t)
    \le C_0-(\kappa-C_{\mathrm{diss}})\,m_2(\mu_t),
  \]
  hence
  \[
    m_2(\mu_t)
    \le e^{-(\kappa-C_{\mathrm{diss}})t}m_2(\mu)
    +\frac{C_0}{\kappa-C_{\mathrm{diss}}}
      \bigl(1-e^{-(\kappa-C_{\mathrm{diss}})t}\bigr),
    \qquad t\ge0.
  \]
  In particular, $m_2(\mu_t)$ is uniformly bounded on $[0,\infty)$.
  Applying It\^o's formula to $|s_t|^2$ and using the previous bound, we deduce that there exists  $C>0$ independent of $t$ such that
  \[
    \sup_{t\ge0}\mathbb E^{s,\mu,\pi}[|s_t|^2]
    \le C\bigl(1+|s|^2+m_2(\mu)\bigr).
  \]
\end{remark}

We next establish differentiability properties of the decoupled flow
with respect to the state and measure variables. While such properties
are standard in the McKean-Vlasov literature, for our purposes we also
need explicit exponential-in-time bounds for the corresponding
derivatives. As noted before
Lemma \ref{lem:moment-growth-clean}, these explicit bounds are
essential for the infinite-horizon analysis.

\begin{lemma}[Exponential bounds for the derivatives of the decoupled flow]\label[lemma]{lem:variational-growth}
  Suppose Assumption \ref{A1} holds.
  Fix an even integer $p\ge2$.
  For $\mu\in\mathcal{P}_2(\mathbb{R}^d)$, let $(\mu_t)_{t\ge0}$ be the
  measure flow generated by \eqref{eq:MKV_SDE}. For each
  $x\in\mathbb{R}^d$, let $X_t^{x,\mu}$ denote the unique strong
  solution of
  \begin{align*}
    \d X_t^{x,\mu}
    = b^\pi(X_t^{x,\mu},\mu_t)\,\d t
    + \sigma^\pi(X_t^{x,\mu},\mu_t)\,\d B_t,
    \qquad
    X_0^{x,\mu}=x.
  \end{align*}
  Then, for every $\xi\in\mathbb{R}^d$, the derivatives
  $\nabla_x X_t^{x,\mu}$, $D_{xx}^2X_t^{x,\mu}$,
  $\partial_\mu X_t^{x,\mu}(\xi)$, and
  $D_\xi\partial_\mu X_t^{x,\mu}(\xi)$ exist. Moreover, there exists a
  constant $C_{p,d}>0$, depending only on $p$ and $d$, such that with $\beta_{\mathrm{var}}(p):=C_{p,d}K_\pi^2,$ one has, for all $t\ge0$, $x\in\mathbb{R}^d$,
  $\mu\in\mathcal{P}_2(\mathbb{R}^d)$, and $\xi\in\mathbb{R}^d$,
  \begin{align}\label{eq:variational-growth}
    \mathbb{E}\!\left[
      \|\nabla_x X_t^{x,\mu}\|^p
      + \|D^2_{xx}X_t^{x,\mu}\|^p
      + \|\partial_\mu X_t^{x,\mu}(\xi)\|^p
      + \|D_\xi\partial_\mu X_t^{x,\mu}(\xi)\|^p
    \right]
    \le C_{p,d}(1+K_\pi^p)e^{\beta_{\mathrm{var}}(p)t}.
  \end{align}
\end{lemma}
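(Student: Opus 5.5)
The approach is the standard one for McKean--Vlasov flows. Existence of the derivatives comes from the classical differentiability theory of \cite{BuckdahnLiPengRainer2017} and \cite[Chapter 5]{carmona2018probabilistic1}: under Assumption \ref{A1} the coefficients have bounded, locally Lipschitz first and second derivatives in $(x,\mu,\xi)$. Each derivative process then solves a linear (variational) SDE, and I would bound its $p$-th moment by It\^o's formula applied to $\|\cdot\|^p$ followed by Gr\"onwall. The point is to track the constants: the growth rate in the exponent should depend only on $K_\pi$ quadratically and on $p,d$, not on the initial data.

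\textbf{Step 1: first-order state derivative.} Write $Y_t:=\nabla_x X_t^{x,\mu}$. It solves
\[
\d Y_t=\nabla_s b^\pi(X_t,\mu_t)Y_t\,\d t+\sum_j\nabla_s\sigma^\pi_{\cdot j}(X_t,\mu_t)Y_t\,\d B^j_t,\qquad Y_0=I_d .
\]
Note that $\mu_t$ does not depend on $x$. I apply It\^o's formula to $\|Y_t\|^p$ (Frobenius norm, $p$ even, so smooth) with localization as in Lemma \ref{lem:moment-growth-clean}. The bounds $\|\nabla_s b^\pi\|,\|\nabla_s\sigma^\pi\|\le K_\pi$ give
\[
\frac{\d}{\d t}\mathbb E\|Y_t\|^p\le C_{p,d}(K_\pi+K_\pi^2)\,\mathbb E\|Y_t\|^p .
\]
Gr\"onwall then yields $\mathbb E\|Y_t\|^p\le d^{p/2}e^{C_{p,d}(K_\pi+K_\pi^2)t}$, and $K_\pi+K_\pi^2$ can be absorbed into $C(1+K_\pi^2)$. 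I would take $\beta_{\mathrm{var}}(p)$ large enough, absorbing the linear term through $K_\pi\le 1+K_\pi^2$ and adjusting $C_{p,d}$ in the prefactor.

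\textbf{Step 2: measure derivative.} For $\partial_\mu X_t^{x,\mu}(\xi)$ the variational equation has two extra inhomogeneities. The first is the Lions-derivative term $\widetilde{\mathbb E}[\partial_\mu b^\pi(X_t,\mu_t)(\tilde X_t^{\xi})\,\partial_x\tilde X^{\xi,\mu}_t]$, coming from the dependence through the representative of the initial particle at $\xi$. The second is $\widetilde{\mathbb E}[\partial_\mu b^\pi(X_t,\mu_t)(\tilde X_t)\,\partial_\mu\tilde X_t(\xi)]$, and the analogous terms for $\sigma^\pi$. These are exactly the decomposition in \cite{BuckdahnLiPengRainer2017}.

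I first treat the population-level derivative $\partial_\mu\tilde X$, which satisfies a closed linear McKean--Vlasov equation. Writing $u(t)=\sup_{\xi}\mathbb E\|\partial_\mu\tilde X_t(\xi)\|^p$, It\^o, Jensen on the $\widetilde{\mathbb E}$ terms, Young's inequality, and Step 1 give
\[
u'\le C(K_\pi+K_\pi^2)\,(u+e^{\beta t}).
\]
Gr\"onwall then gives exponential growth with the same type of rate. The representative $\partial_\mu X_t^{x,\mu}(\xi)$ is then handled by a linear SDE with source bounded by these quantities, and the same estimate applies. The source terms are why the prefactor is $C(1+K_\pi^p)$ and not a pure constant.

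\textbf{Step 3: second-order derivatives, the main obstacle.} For $D^2_{xx}X$ and $D_\xi\partial_\mu X$, the variational equations again have the homogeneous part bounded by $K_\pi$. The source terms are quadratic in the first-order derivatives, such as $D^2_{ss}b^\pi[Y_t,Y_t]$ and products of $\partial_\mu$-derivatives. These are controlled by $\mathbb E\|Y_t\|^{2p}$ via Cauchy--Schwarz, so I would run Steps 1--2 at exponent $2p$ first. Here the difficulty is bookkeeping: doing this keeps everything of the form $Ce^{C_{2p,d}K_\pi^2t}$, since $C_{2p,d}$ is again a constant depending only on $p,d$. Hence $\beta_{\mathrm{var}}(p):=C_{p,d}K_\pi^2$ works after enlarging $C_{p,d}$.

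I also need uniformity in $(x,\mu,\xi)$. It holds because the coefficient derivatives are uniformly bounded by $K_\pi$, so no moments of $X$ enter these estimates. For $D_\xi\partial_\mu X$, the $\xi$-derivative of the particle started at $\xi$ is again handled by Step 1. Summing the four bounds gives \eqref{eq:variational-growth}.
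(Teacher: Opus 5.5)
Your overall scheme is the paper's: existence from \cite{BuckdahnLiPengRainer2017}, It\^o's formula for $\|\cdot\|^p$ on each variational SDE, Gr\"onwall, and $2p$-moments of $\nabla_xX$ together with the bound on $D^2_{xx}X$ to control the quadratic sources at second order. Your Lions-derivative step, however, genuinely differs from the paper's, and it is the more complete of the two.

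The paper writes \eqref{eq:variational_U} with only the transport source $F_t^{x,\mu}(\xi)=\widetilde{\mathbb E}[\partial_\mu b^\pi(X_t^{x,\mu},\mu_t)(\widetilde X_t^{\xi,\mu})\widetilde J_t^{\xi,\mu}]$. It then treats $U$ as the solution of a linear SDE with an exogenous source. As you observe, perturbing $\mu$ also moves $\mu_t$ through the measure derivative of the flow itself. This produces the additional term $\widetilde{\mathbb E}[\partial_\mu b^\pi(X_t^{x,\mu},\mu_t)(\widetilde X_t^{\widetilde\vartheta,\mu})\,\partial_\mu\widetilde X_t^{\widetilde\vartheta,\mu}(\xi)]$ with $\widetilde\vartheta\sim\mu$, which is present in the variational equation of \cite{BuckdahnLiPengRainer2017}. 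The term makes the equation self-referential. Your two-stage argument handles it: first close a Gr\"onwall inequality for the population-level quantity, using finite-horizon finiteness from \cite{BuckdahnLiPengRainer2017}, then treat the representative equation with known sources.

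The bound keeps the stated form because the extra term is linear with a coefficient of size $K_\pi$. This holds only if you estimate the cross term by H\"older, $\mathbb E[p\|U\|^{p-1}\|F^{\mathrm{coll}}\|]\le C_{p,d}K_\pi(\mathbb E\|U\|^p)^{(p-1)/p}u^{1/p}$, and similarly at order $K_\pi^2$ for the diffusion part. The plain Young bound $\|U\|^p+\|F^{\mathrm{coll}}\|^p$ would give a rate of order $K_\pi^p$, which is not of the form $C_{p,d}K_\pi^2$ once $p\ge4$.

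The same self-referential term, with $D_\xi\partial_\mu\widetilde X_t^{\widetilde\vartheta,\mu}(\xi)$ in place of $\partial_\mu\widetilde X_t^{\widetilde\vartheta,\mu}(\xi)$, appears in the equation for $D_\xi\partial_\mu X$; it is also missing from the paper's $\bar F_t$. In your Step 3 you must therefore repeat the closure there, rather than treat every source as quadratic in first-order derivatives.

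One step of your Step 1 does not work as written. The rate $C_{p,d}(K_\pi+K_\pi^2)\le C_{p,d}(1+K_\pi^2)$ leaves a factor $e^{C_{p,d}t}$, and a time-dependent factor cannot be absorbed into the prefactor. Instead, enlarge $K_\pi$ once and for all so that $K_\pi\ge1$, as the paper does; this is allowed because Assumption \ref{A1} only asks for an upper bound, and it gives $K_\pi\le K_\pi^2$.

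This normalization is necessary, not cosmetic. For $b^\pi(x,\mu)=K_\pi x$ and $\sigma^\pi=0$ one has $\mathbb E\|\nabla_xX_t^{x,\mu}\|^p=d^{p/2}e^{pK_\pi t}$. That violates any bound $C_{p,d}(1+K_\pi^p)e^{C_{p,d}K_\pi^2t}$ when $K_\pi<p/C_{p,d}$ and $t$ is large.
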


\begin{proof}
Under Assumption \ref{A1}, the coefficients $b^\pi$ and $\sigma^\pi$ are twice differentiable with respect to the state variable and Lions differentiable with respect to the measure argument, with bounded derivatives up to second order. Therefore, the differentiability results of \cite[Theorem 3.1 and Proposition 4.1]{BuckdahnLiPengRainer2017} apply, and the derivatives $\nabla_x X_t^{x,\mu}$, $D^2_{xx}X_t^{x,\mu}$, $\partial_\mu X_t^{x,\mu}(\xi)$, and $D_\xi\partial_\mu X_t^{x,\mu}(\xi)$ are well defined. It remains to prove the estimate \eqref{eq:variational-growth}. For simplicity, write
  \[
    J_t^{x,\mu}:=\nabla_x X_t^{x,\mu},\qquad
    H_t^{x,\mu}:=D^2_{xx}X_t^{x,\mu},\qquad
    U_t^{x,\mu}(\xi):=\partial_\mu X_t^{x,\mu}(\xi),\qquad
    V_t^{x,\mu}(\xi):=D_\xi\partial_\mu X_t^{x,\mu}(\xi).
  \]
  We denote by $\sigma^\pi_\ell$ the $\ell$-th column of $\sigma^\pi$,
  $\ell=1,\dots,d$.
  Since only upper bounds are relevant, we may enlarge $K_\pi$ once and
  for all and assume $K_\pi\ge1$.

  \smallskip
  \noindent\textbf{Step 1 (Bounds for the first spatial derivative).}
  Differentiating the decoupled SDE with respect to $x$ gives
  \begin{align}\label{eq:variational_J}
    \d J_t^{x,\mu}
    =
    \nabla_s b^\pi(X_t^{x,\mu},\mu_t)J_t^{x,\mu}\,\d t
    + \sum_{\ell=1}^d
      \nabla_s \sigma_\ell^\pi(X_t^{x,\mu},\mu_t)J_t^{x,\mu}\,\d B_t^\ell,
    \qquad
    J_0^{x,\mu}=I_d.
  \end{align}
  Since each entry of $\nabla_s b^\pi$ and $\nabla_s\sigma^\pi_\ell$
  is bounded by $K_\pi$, we have
  \begin{align}\label{eq:bound_nabla_b_sigma}
    \|\nabla_s b^\pi(X_t^{x,\mu},\mu_t)\|
    \le \sqrt d\,K_\pi,\qquad  \sum_{\ell=1}^d
    \|\nabla_s \sigma_\ell^\pi(X_t^{x,\mu},\mu_t)\|^2
    \le d^2K_\pi^2.
  \end{align}
  Applying It\^o's formula to $\|J_t^{x,\mu}\|^p$, integrating over
  $[0,t]$, and taking expectations, we obtain
  \begin{align*}
    \mathbb{E}\!\left[\|J_t^{x,\mu}\|^p\right]
    &=
    \|I_d\|^p
    + p\int_0^t
      \mathbb{E}\!\left[
        \|J_r^{x,\mu}\|^{p-2}
        \left\langle
          J_r^{x,\mu},
          \nabla_s b^\pi(X_r^{x,\mu},\mu_r)J_r^{x,\mu}
        \right\rangle
      \right]\d r
 \\
    &\quad
    + \frac p2 \sum_{\ell=1}^d \int_0^t
      \mathbb{E}\!\left[
        \|J_r^{x,\mu}\|^{p-2}
        \|\nabla_s \sigma_\ell^\pi(X_r^{x,\mu},\mu_r)J_r^{x,\mu}\|^2
      \right]\d r
    \\
    &\quad
    + \frac{p(p-2)}2 \sum_{\ell=1}^d \int_0^t
      \mathbb{E}\!\left[
        \|J_r^{x,\mu}\|^{p-4}
        \left\langle
          J_r^{x,\mu},
          \nabla_s \sigma_\ell^\pi(X_r^{x,\mu},\mu_r)J_r^{x,\mu}
        \right\rangle^2
      \right]\d r.
  \end{align*}
  Using \eqref{eq:bound_nabla_b_sigma}, and
  $\langle z,Az\rangle\le \|A\|\,\|z\|^2$, and recalling $K_\pi\ge1$,
  we infer that
  \begin{align*}
    \mathbb{E}\!\left[\|J_t^{x,\mu}\|^p\right]
    \le d^{p/2}
    + a_{p,d}K_\pi^2
      \int_0^t \mathbb{E}\!\left[\|J_r^{x,\mu}\|^p\right]\d r .
  \end{align*}
  Gr\"onwall's inequality gives
  \begin{align}\label{eq:J_growth}
    \mathbb{E}\!\left[\|J_t^{x,\mu}\|^p\right]
    \le d^{p/2}e^{a_{p,d}K_\pi^2 t}.
  \end{align}

  \smallskip
  \noindent\textbf{Step 2 (Bounds for the second spatial derivative).}
  Differentiating \eqref{eq:variational_J} once more with respect to $x$
  gives
  \begin{align}\label{eq:variational_H}
    \d H_t^{x,\mu}
    &=
    \Bigl[
      \nabla_s b^\pi(X_t^{x,\mu},\mu_t)H_t^{x,\mu}
      + D^2_{ss}b^\pi(X_t^{x,\mu},\mu_t)
        [J_t^{x,\mu},J_t^{x,\mu}]
    \Bigr]\d t
    \notag\\
    &\quad
    +
    \sum_{\ell=1}^d
    \Bigl[
      \nabla_s \sigma_\ell^\pi(X_t^{x,\mu},\mu_t)H_t^{x,\mu}
      + D^2_{ss}\sigma_\ell^\pi(X_t^{x,\mu},\mu_t)
        [J_t^{x,\mu},J_t^{x,\mu}]
    \Bigr]\d B_t^\ell,
    \qquad
    H_0^{x,\mu}=0.
  \end{align}
  By Assumption \ref{A1},
  \begin{align}\label{eq:H_source_bound}
    \|D^2_{ss}b^\pi(X_t^{x,\mu},\mu_t)[J_t^{x,\mu},J_t^{x,\mu}]\|
    + \sum_{\ell=1}^d
      \|D^2_{ss}\sigma_\ell^\pi(X_t^{x,\mu},\mu_t)
        [J_t^{x,\mu},J_t^{x,\mu}]\|
    \le (d+1)K_\pi\|J_t^{x,\mu}\|^2.
  \end{align}
  Applying It\^o's formula to $\|H_t^{x,\mu}\|^p$ in
  \eqref{eq:variational_H}, integrating over $[0,t]$, taking
  expectations, and using \eqref{eq:bound_nabla_b_sigma},
  \eqref{eq:bound_nabla_b_sigma}, \eqref{eq:H_source_bound}, and Young's
  inequality, we obtain
  \begin{align*}
    \mathbb{E}\!\left[\|H_t^{x,\mu}\|^p\right]
    \le
    a_{p,d}K_\pi^2
    \int_0^t \mathbb{E}\!\left[\|H_r^{x,\mu}\|^p\right]\d r
    + C_{p,d}K_\pi^p
    \int_0^t \mathbb{E}\!\left[\|J_r^{x,\mu}\|^{2p}\right]\d r.
  \end{align*}
  Using \eqref{eq:J_growth} with $2p$ in place of $p$ and then
  Gr\"onwall's inequality, we get
  \begin{align}\label{eq:H_growth}
    \mathbb{E}\!\left[\|H_t^{x,\mu}\|^p\right]
    \le C_{p,d}(1+K_\pi^p)e^{C_{p,d}K_\pi^2 t}.
  \end{align}
  \smallskip
  \noindent\textbf{Step 3 (Bounds for the Lions derivative).}
  Let $(\widetilde\Omega,\widetilde{\mathcal F},\widetilde{\mathbb P})$
  be an independent copy of the original probability space, and denote
  by $\widetilde X_t^{\xi,\mu}$ and $\widetilde J_t^{\xi,\mu}:=\nabla_x X_t^{x,\mu}\big|_{x=\xi},$ the corresponding independent copies of the decoupled flow and its
  first spatial derivative, started from $\xi$. Then
  $U_t^{x,\mu}(\xi)$ satisfies
  \begin{align}\label{eq:variational_U}
    \d U_t^{x,\mu}(\xi)
    &=
    \Bigl[
      \nabla_s b^\pi(X_t^{x,\mu},\mu_t)U_t^{x,\mu}(\xi)
      + F_t^{x,\mu}(\xi)
    \Bigr]\d t
    +
    \sum_{\ell=1}^d
    \Bigl[
      \nabla_s \sigma_\ell^\pi(X_t^{x,\mu},\mu_t)U_t^{x,\mu}(\xi)
      + G_{t,\ell}^{x,\mu}(\xi)
    \Bigr]\d B_t^\ell
  \end{align}
  where $U_0^{x,\mu}(\xi)=0,$ and 
  \begin{align*}
    F_t^{x,\mu}(\xi)
    :=
    \widetilde{\mathbb E}\!\left[
      \partial_\mu b^\pi(X_t^{x,\mu},\mu_t)(\widetilde X_t^{\xi,\mu})
      \,\widetilde J_t^{\xi,\mu}
    \right],\qquad
    G_{t,\ell}^{x,\mu}(\xi)
    :=
    \widetilde{\mathbb E}\!\left[
      \partial_\mu \sigma_\ell^\pi(X_t^{x,\mu},\mu_t)(\widetilde X_t^{\xi,\mu})
      \,\widetilde J_t^{\xi,\mu}
    \right].
  \end{align*}
  Since $\partial_\mu b^\pi$ and $\partial_\mu\sigma^\pi_\ell$ are
  bounded by $K_\pi$, Jensen's inequality yields
  \[
    \|F_t^{x,\mu}(\xi)\|^p
    \le K_\pi^p\,
    \widetilde{\mathbb E}\!\left[\|\widetilde J_t^{\xi,\mu}\|^p\right],
    \qquad
    \|G_{t,\ell}^{x,\mu}(\xi)\|^p
    \le K_\pi^p\,
    \widetilde{\mathbb E}\!\left[\|\widetilde J_t^{\xi,\mu}\|^p\right].
  \]
  Taking expectations and using \eqref{eq:J_growth} with $x=\xi$, we obtain
  \begin{align}\label{eq:FG_growth}
    \mathbb{E}\!\left[\|F_t^{x,\mu}(\xi)\|^p\right]
    + \sum_{\ell=1}^d
      \mathbb{E}\!\left[\|G_{t,\ell}^{x,\mu}(\xi)\|^p\right]
    \le C_{p,d}K_\pi^p e^{C_{p,d}K_\pi^2 t}.
  \end{align}
  Applying It\^o's formula to $\|U_t^{x,\mu}(\xi)\|^p$ in
  \eqref{eq:variational_U}, integrating over $[0,t]$, taking
  expectations, and using \eqref{eq:FG_growth}, we infer
  \begin{align*}
    \mathbb{E}\!\left[\|U_t^{x,\mu}(\xi)\|^p\right]
    \le
    a_{p,d}K_\pi^2
    \int_0^t \mathbb{E}\!\left[\|U_r^{x,\mu}(\xi)\|^p\right]\d r
    + C_{p,d}K_\pi^p
    \int_0^t e^{C_{p,d}K_\pi^2 r}\,\d r .
  \end{align*}
  Gr\"onwall's inequality gives
  \begin{align}\label{eq:U_growth}
    \mathbb{E}\!\left[\|U_t^{x,\mu}(\xi)\|^p\right]
    \le C_{p,d}(1+K_\pi^p)e^{C_{p,d}K_\pi^2 t}.
  \end{align}

  \smallskip
  \noindent\textbf{Step 4 (Bounds for $D_\xi\partial_\mu X_t^{x,\mu}(\xi)$).}
  Let $ \widetilde H_t^{\xi,\mu}
    :=D^2_{xx}X_t^{x,\mu}\big|_{x=\xi}$ denote the independent copy of the second spatial derivative started
  from $\xi$. Differentiating \eqref{eq:variational_U} with respect to
  $\xi$ yields
  \begin{align}\label{eq:variational_V}
    \d V_t^{x,\mu}(\xi)
    &=
    \Bigl[
      \nabla_s b^\pi(X_t^{x,\mu},\mu_t)V_t^{x,\mu}(\xi)
      + \bar F_t^{x,\mu}(\xi)
    \Bigr]\d t
    +
    \sum_{\ell=1}^d
    \Bigl[
      \nabla_s \sigma_\ell^\pi(X_t^{x,\mu},\mu_t)V_t^{x,\mu}(\xi)
      + \bar G_{t,\ell}^{x,\mu}(\xi)
    \Bigr]\d B_t^\ell,
  \end{align}
  where $V_0^{x,\mu}(\xi)=0,$ and
  \begin{align*}
    \bar F_t^{x,\mu}(\xi)
    &:=
    \widetilde{\mathbb E}\!\left[
      D_\xi\partial_\mu b^\pi(X_t^{x,\mu},\mu_t)(\widetilde X_t^{\xi,\mu})
      [\widetilde J_t^{\xi,\mu},\widetilde J_t^{\xi,\mu}]
      + \partial_\mu b^\pi(X_t^{x,\mu},\mu_t)(\widetilde X_t^{\xi,\mu})
      \widetilde H_t^{\xi,\mu}
    \right],\\
    \bar G_{t,\ell}^{x,\mu}(\xi)
    &:=
    \widetilde{\mathbb E}\!\left[
      D_\xi\partial_\mu \sigma_\ell^\pi(X_t^{x,\mu},\mu_t)(\widetilde X_t^{\xi,\mu})
      [\widetilde J_t^{\xi,\mu},\widetilde J_t^{\xi,\mu}]
      + \partial_\mu \sigma_\ell^\pi(X_t^{x,\mu},\mu_t)(\widetilde X_t^{\xi,\mu})
      \widetilde H_t^{\xi,\mu}
    \right].
  \end{align*}
  Since $D_\xi\partial_\mu b^\pi$, $D_\xi\partial_\mu\sigma^\pi_\ell$,
  $\partial_\mu b^\pi$, and $\partial_\mu\sigma^\pi_\ell$ are all
  bounded by $K_\pi$, Jensen's inequality gives
  \[
    \|\bar F_t^{x,\mu}(\xi)\|^p
    \le C_{p,d}K_\pi^p\,
    \widetilde{\mathbb E}\!\left[
      \|\widetilde J_t^{\xi,\mu}\|^{2p}
      + \|\widetilde H_t^{\xi,\mu}\|^p
    \right],
  \]
  and similarly for $\bar G_{t,\ell}^{x,\mu}(\xi)$. Taking expectations,
  and using \eqref{eq:J_growth} with $2p$ in place of $p$ together with
  \eqref{eq:H_growth}, we obtain
  \begin{align}\label{eq:FbarGbar_growth}
    \mathbb{E}\!\left[\|\bar F_t^{x,\mu}(\xi)\|^p\right]
    + \sum_{\ell=1}^d
      \mathbb{E}\!\left[\|\bar G_{t,\ell}^{x,\mu}(\xi)\|^p\right]
    \le C_{p,d}(1+K_\pi^p)e^{C_{p,d}K_\pi^2 t}.
  \end{align}
  Applying It\^o's formula to $\|V_t^{x,\mu}(\xi)\|^p$ in
  \eqref{eq:variational_V}, integrating over $[0,t]$, taking
  expectations, and using \eqref{eq:FbarGbar_growth}, we infer
  \begin{align*}
    \mathbb{E}\!\left[\|V_t^{x,\mu}(\xi)\|^p\right]
    \le
    a_{p,d}K_\pi^2
    \int_0^t \mathbb{E}\!\left[\|V_r^{x,\mu}(\xi)\|^p\right]\d r
    + C_{p,d}(1+K_\pi^p)
    \int_0^t e^{C_{p,d}K_\pi^2 r}\,\d r .
  \end{align*}
  Gr\"onwall's inequality yields
  \begin{align}\label{eq:V_growth}
    \mathbb{E}\!\left[\|V_t^{x,\mu}(\xi)\|^p\right]
    \le C_{p,d}(1+K_\pi^p)e^{C_{p,d}K_\pi^2 t}.
  \end{align}
  Combining \eqref{eq:J_growth}, \eqref{eq:H_growth},
  \eqref{eq:U_growth}, and \eqref{eq:V_growth}, we obtain
  \eqref{eq:variational-growth}.
\end{proof}

\begin{remark}[Explicit value of $\beta_{\mathrm{var}}(2)$]\label{remark_explicit_beta_var02}
A careful analysis shows that the constant $C_{p,d}$ from \Cref{lem:variational-growth} is given by 
  \begin{align*}
    C_{p,d}:=2^{2p}(1+d^p)\bigl(1+2p\sqrt d+\frac{2p(2p-1)}{2}d^2+(d+1)^p\bigr),
  \end{align*}
In particular, we have that $\beta_{\mathrm{var}}(2)=(32+32d+64\sqrt d+144d^{2}+32d^{3}+64d^{5/2}+112d^{4})K_\pi^2$.
\end{remark}
We can now prove \Cref{thm:MF_eval_infinite}.

\begin{proof}[\textbf{Proof of Theorem \ref{thm:MF_eval_infinite}}]
  We proceed in several steps.

  \smallskip
  \noindent\textbf{Step 1 (Finite-horizon problem).}
  For each $T>0$, define
  \begin{align*}
    V_T^\pi(t,s,\mu)
    :=\mathbb{E}^{t,s,\mu,\pi}\!\left[
      \int_t^T e^{-\beta(\tau-t)}\,r_\lambda^\pi(s_\tau,\mu_\tau)\,\d\tau
    \right],
    \qquad
    V_T^\pi(T,\cdot,\cdot)\equiv0.
  \end{align*}
  Under Assumption \ref{A1}, the finite-horizon
  policy-evaluation problem with coefficients $b^\pi$, $\sigma^\pi$,
  running reward $r_\lambda^\pi$, and terminal cost $g\equiv0$ falls
  under the time-homogeneous version of
  \cite[Assumption 2.1]{frikhaActorCritic}. Therefore, by
  \cite[Proposition 2.1]{frikhaActorCritic}, $V_T^\pi\in
    C^{1,2,2}\bigl([0,T)\times\mathbb{R}^d\times\mathcal{P}_2(\mathbb{R}^d)\bigr)$
  and $V_T^\pi$ solves
  \begin{equation}\label{eq:backward_kolmogorov}
    \partial_t V_T^\pi
    + \mathcal{L}_{b,\Sigma}^\pi V_T^\pi
    - \beta V_T^\pi
    + r_\lambda^\pi
    =0,
    \qquad
    0\le t<T,
  \end{equation}
  with terminal condition $V_T^\pi(T,\cdot,\cdot)=0$.

  Set
  \begin{align*}
    U_T^\pi(s,\mu):=V_T^\pi(0,s,\mu)
    = \mathbb{E}\!\left[
      \int_0^T e^{-\beta t}\,r_\lambda^\pi(X_t^{s,\mu},\mu_t)\,\d t
    \right].
  \end{align*}

  \smallskip
  \noindent\textbf{Step 2 (Well-definedness of $V^\pi$ and convergence of $U_T^\pi$).}
  Since
  $r_\lambda^\pi\in\mathcal{C}^{2,2}_{\mathrm{poly}}
  (\mathbb{R}^d\times\mathcal{P}_2(\mathbb{R}^d))$,
  Definition \ref{def:C22poly} yields
  \begin{align}\label{eq:r_growth}
    |r_\lambda^\pi(s,\mu)|
    \le C_r\bigl(1+|s|^2+m_2(\mu)\bigr)
  \end{align}
  for some constant $C_r>0$. By
  Lemma \ref{lem:moment-growth-clean} with $p=2$,
  \begin{align}\label{eq:moment_growth_representative}
    \mathbb{E}^{s,\mu,\pi}[|s_t|^2]
    + \mathbb{E}^{s,\mu,\pi}[|\tilde s_t|^2]
    \le C_2 e^{\beta_0(2)t}\bigl(1+|s|^2+m_2(\mu)\bigr),
    \qquad
    t\ge0.
  \end{align}
  Since $m_2(\mu_t)=\mathbb{E}^{s,\mu,\pi}[|\tilde s_t|^2]$, it follows
  from \eqref{eq:r_growth} and \eqref{eq:moment_growth_representative}
  that
  \begin{align}\label{eq:dominating_r_final}
    e^{-\beta t}\,
    \mathbb{E}^{s,\mu,\pi}\!\left[
      |r_\lambda^\pi(s_t,\mu_t)|
    \right]
    \le
    C e^{-(\beta-\beta_0(2))t}
    \bigl(1+|s|^2+m_2(\mu)\bigr).
  \end{align}
  Therefore, if $\beta>\bar{\beta}_\pi$, then the right-hand side of
  \eqref{eq:dominating_r_final} is integrable on $[0,\infty)$, and the
  value function $V^\pi(s,\mu)$ is well defined. Moreover, dominated convergence yields
  \begin{align}\label{eq:UT_to_V_pointwise}
    U_T^\pi(s,\mu)\xrightarrow[T\to\infty]{}V^\pi(s,\mu)
  \end{align}
  for every $(s,\mu)\in\mathbb{R}^d\times\mathcal{P}_2(\mathbb{R}^d)$.
  If $K_R:=\bigl\{(s,\mu)\in\mathbb{R}^d\times\mathcal{P}_2(\mathbb{R}^d):
    |s|^2+m_2(\mu)\le R\bigr\},$ then \eqref{eq:dominating_r_final} also gives
  \begin{align}\label{eq:UT_to_V_uniform}
    \sup_{(s,\mu)\in K_R}|V^\pi(s,\mu)-U_T^\pi(s,\mu)|
    \le C_R\int_T^\infty e^{-(\beta-\beta_0(2))t}\,\d t
    \xrightarrow[T\to\infty]{}0.
  \end{align}

  \smallskip
  \noindent\textbf{Step 3 (Uniform finite-horizon bounds in $\mathcal{C}^{2,2}_{\mathrm{poly}}$).}
  We first record the corresponding moment bound for the decoupled flow
  $X_t^{x,\mu}$. By the same It\^o-Gr\"onwall argument used in the proof
  of Lemma \ref{lem:moment-growth-clean}, together with
  \eqref{eq:linear_growth_bpi} and the estimate on $m_2(\mu_t)$ coming
  from \eqref{eq:moment_growth_representative}, there exists a constant
  $C>0$, depending only on the structural constants in
  Assumption \ref{A1}, such that
  \begin{align*}
    \mathbb{E}\!\left[|X_t^{x,\mu}|^2\right]
    \le C e^{\beta_0(2)t}\bigl(1+|x|^2+m_2(\mu)\bigr),
    \qquad
    t\ge0.
  \end{align*}
  
  We now estimate the derivatives of $U_T^\pi$. Since $U_T^\pi$ is the finite-horizon value function, Step 2 of \cite[Appendix A.1]{frikhaActorCritic} provides explicit representation formulas for $\nabla_s U_T^\pi$, $D^2_{ss}U_T^\pi$, $\partial_\mu U_T^\pi$, and $D_\xi\partial_\mu U_T^\pi$. These formulas show that each derivative is given by an integral over $[0,T]$ of
$e^{-\beta t}$ times the expectation of a finite linear combination of terms involving derivatives of $r_\lambda^\pi$ evaluated at
$(X_t^{s,\mu},\mu_t)$ and $(X_t^{\xi,\mu},\mu_t)$, together with the
corresponding derivatives of the flow. To avoid an unnecessarily long proof, we refer to \cite[Appendix A.1, Step 2]{frikhaActorCritic} for the explicit expressions. Therefore, by using Lemma \ref{lem:variational-growth} with $p=2$, and the fact that $r_\lambda^\pi\in\mathcal{C}^{2,2}_{\mathrm{poly}}(\mathbb{R}^d\times\mathcal{P}_2(\mathbb{R}^d))$ we derive the estimates
  \begin{align}
    \mathbb{E}\!\left[
      \left|
        \nabla_s r_\lambda^\pi(X_t^{s,\mu},\mu_t)\,
        \nabla_x X_t^{s,\mu}
      \right|
    \right]+\mathbb{E}\!\left[
      \left|
        \nabla_s r_\lambda^\pi(X_t^{s,\mu},\mu_t)\,
        \partial_\mu X_t^{s,\mu}(\xi)
      \right|
    \right]
    &\le
    C e^{\bar{\beta}_\pi t}\bigl(1+|s|+m_2(\mu)\bigr),
    \label{eq:bound_type_grad_1}
  \end{align}
  and 
  \begin{equation*}
    \begin{aligned}
         &\mathbb{E}\!\left[
      \left|
        \nabla_s r_\lambda^\pi(X_t^{s,\mu},\mu_t)\,
        D^2_{xx}X_t^{s,\mu}
      \right|
    \right]
   +\mathbb{E}\!\left[
      \left|
        \nabla_s r_\lambda^\pi(X_t^{s,\mu},\mu_t)\,
        D_\xi\partial_\mu X_t^{s,\mu}(\xi)
      \right|
    \right] \\
    &\quad+\mathbb{E}\!\left[
      \left|
        D^2_{ss}r_\lambda^\pi(X_t^{s,\mu},\mu_t)
        [\nabla_x X_t^{s,\mu},\nabla_x X_t^{s,\mu}]
      \right|
    \right]+\mathbb{E}\!\left[
      \left|
        D_\xi\partial_\mu r_\lambda^\pi(X_t^{s,\mu},\mu_t)(X_t^{\xi,\mu})
        \,\nabla_x X_t^{\xi,\mu}
      \right|
    \right]\\
    &\hspace{10cm}\le
    C e^{\bar{\beta}_\pi t}\bigl(1+m_2(\mu)\bigr),
    \end{aligned}
  \end{equation*}
and 
  \begin{align}
    \mathbb{E}\!\left[
      \left|
        \partial_\mu r_\lambda^\pi(X_t^{s,\mu},\mu_t)(X_t^{\xi,\mu})
      \right|
    \right]
    +\mathbb{E}\!\left[
      \left|
        \partial_\mu r_\lambda^\pi(X_t^{s,\mu},\mu_t)(X_t^{\xi,\mu})
        \,D^2_{xx}X_t^{\xi,\mu}
      \right|
    \right]\le
    C e^{\beta_0(2)t}\bigl(1+|s|+|\xi|+m_2(\mu)\bigr)
    \label{eq:bound_type_grad_2}.
  \end{align}

  Integrating \eqref{eq:bound_type_grad_1}-\eqref{eq:bound_type_grad_2}
  over $[0,T]$, multiplying by $e^{-\beta t}$, and using
  $\beta>\bar{\beta}_\pi$, we conclude that there exists a constant
  $C>0$, independent of $T$, such that for all
  $(s,\mu,\xi)\in\mathbb{R}^d\times\mathcal{P}_2(\mathbb{R}^d)\times\mathbb{R}^d$,
  \begin{align}
    |U_T^\pi(s,\mu)|
    &\le C\bigl(1+|s|^2+m_2(\mu)\bigr),
    \label{eq:UT_growth}\\
    |\nabla_s U_T^\pi(s,\mu)|
    + |\partial_\mu U_T^\pi(s,\mu)(\xi)|
    &\le C\bigl(1+|s|+|\xi|+m_2(\mu)\bigr),
    \label{eq:UT_first_growth}\\
    \|D^2_{ss}U_T^\pi(s,\mu)\|
    + \|D_\xi\partial_\mu U_T^\pi(s,\mu)(\xi)\|
    &\le C\bigl(1+m_2(\mu)\bigr).
    \label{eq:UT_second_growth}
  \end{align}

  \smallskip
  \noindent\textbf{Step 4 (Passage to the limit in the derivatives).}
  Let
  \[
    K_R:=\bigl\{(s,\mu)\in\mathbb{R}^d\times\mathcal{P}_2(\mathbb{R}^d):
    |s|^2+m_2(\mu)\le R\bigr\},
    \qquad
    B_R:=\{\xi\in\mathbb{R}^d: |\xi|\le R\}.
  \]
  By \eqref{eq:UT_to_V_uniform}, the family $(U_T^\pi)_{T>0}$ is
  uniformly Cauchy on $K_R$. Using the representation formulas from \cite[Appendix A.1, Step 2]{frikhaActorCritic} together with the estimations
  \eqref{eq:bound_type_grad_1}-\eqref{eq:bound_type_grad_2}, we
  obtain, for every $D_\alpha\in \{1,\nabla_s,\partial_\mu, D_{ss}^2, D_\xi \partial_\mu\}$, for all $T'>T\ge0$, all $(s,\mu)\in K_R$, and all $\xi\in B_R$,
  \begin{align*}
    |D_\alpha U_{T'}^\pi(s,\mu)-D_\alpha U_T^\pi(s,\mu)|
    &\le
    C_R\int_T^{T'} e^{-(\beta-\bar{\beta}_\pi)t}\,\d t,
  \end{align*}
  Since $\beta>\bar{\beta}_\pi$, the right-hand sides tend to zero as
  $T\to\infty$. Hence, for each $D_\alpha$, the families $\{D_\alpha U_T^\pi\}_{T\geq 0}$ are locally uniformly Cauchy and thus converge locally uniformly to
  continuous limits.

  By \eqref{eq:UT_to_V_pointwise}, the limit of $U_T^\pi$ is precisely
  $V^\pi$. The local uniform convergence of $U_T^\pi$,
  $\nabla_s U_T^\pi$, and $D^2_{ss}U_T^\pi$ implies that $V^\pi$ is twice
  continuously differentiable in $s$, with
  \[
    \nabla_s V^\pi=\lim_{T\to\infty}\nabla_s U_T^\pi,
    \qquad
    D^2_{ss}V^\pi=\lim_{T\to\infty}D^2_{ss}U_T^\pi.
  \]
  For the measure derivative, let $\vartheta,\vartheta'$ be square-integrable
  random variables with laws $\mu$ and $\mu'$, and set
  $\vartheta_\lambda:=\vartheta+\lambda(\vartheta'-\vartheta)$,
  $\mu_\lambda:=\Law(\vartheta_\lambda)$. Since each $U_T^\pi$ is Lions
  differentiable, the mean-value formula on
  $\mathcal{P}_2(\mathbb{R}^d)$ gives
  \begin{align}\label{eq:mean_value_formula_UT}
    U_T^\pi(s,\mu')-U_T^\pi(s,\mu)
    =
    \int_0^1
    \mathbb{E}\!\left[
      \partial_\mu U_T^\pi(s,\mu_\lambda)(\vartheta_\lambda)
      \cdot(\vartheta'-\vartheta)
    \right]\d\lambda.
  \end{align}
  Passing to the limit $T\to\infty$ in \eqref{eq:mean_value_formula_UT},
  using the local uniform convergence of $U_T^\pi$ and
  $\partial_\mu U_T^\pi$ together with dominated convergence, we obtain
  that $V^\pi$ is Lions differentiable and
  \[
    \partial_\mu V^\pi
    =
    \lim_{T\to\infty}\partial_\mu U_T^\pi.
  \]
  Applying the same argument to
  $\partial_\mu U_T^\pi(s,\mu)(\xi)$ as a function of $\xi$, and using
  the local uniform convergence of
  $D_\xi\partial_\mu U_T^\pi(s,\mu)(\xi)$, we conclude that
  $D_\xi\partial_\mu V^\pi$ exists and is jointly continuous. Therefore, $  V^\pi\in
    \mathcal{C}^{2,2}(\mathbb{R}^d\times\mathcal{P}_2(\mathbb{R}^d)).$
  Letting $T\to\infty$ in \eqref{eq:UT_growth},
  \eqref{eq:UT_first_growth}, and \eqref{eq:UT_second_growth}, we obtain that $V^\pi\in
    \mathcal{C}^{2,2}_{\mathrm{poly}}
    (\mathbb{R}^d\times\mathcal{P}_2(\mathbb{R}^d)).$

  \smallskip
  \noindent\textbf{Step 5 (The stationary HJB equation).}
  Evaluating \eqref{eq:backward_kolmogorov} at $t=0$ yields
  \begin{align}\label{eq:kolmogorov_at_t0}
    \bigl(\mathcal{L}_{b,\Sigma}^\pi-\beta\bigr)U_T^\pi(s,\mu)
    +r_\lambda^\pi(s,\mu)
    = -\partial_t V_T^\pi(0,s,\mu).
  \end{align}
  Since the problem is time-homogeneous, $V_T^\pi(t,s,\mu)=U_{T-t}^\pi(s,\mu),$
  and therefore
  \begin{align*}
    -\partial_t V_T^\pi(0,s,\mu)
    =
    \frac{\d}{\d T}U_T^\pi(s,\mu)
    =
    \mathbb{E}\!\left[
      e^{-\beta T}r_\lambda^\pi(X_T^{s,\mu},\mu_T)
    \right].
  \end{align*}
  Thus, by \eqref{eq:dominating_r_final},
  \begin{align}\label{eq:boundary_term_to_zero}
    \left|\partial_t V_T^\pi(0,s,\mu)\right|\leq \left|
      \mathbb{E}\!\left[
        e^{-\beta T}r_\lambda^\pi(X_T^{s,\mu},\mu_T)
      \right]
    \right|
    \le
    C e^{-(\beta-\beta_0(2))T}\bigl(1+|s|^2+m_2(\mu)\bigr)
    \xrightarrow[T\to\infty]{}0.
  \end{align}

  It remains to pass to the limit in
  $\bigl(\mathcal{L}_{b,\Sigma}^\pi-\beta\bigr)U_T^\pi(s,\mu)$.
  The terms involving $\nabla_s U_T^\pi$ and $D^2_{ss}U_T^\pi$ converge
  pointwise by Step 4. For the Lions terms, using
  \eqref{eq:linear_growth_bpi}, \eqref{eq:bounds_Simga_pi},
  \eqref{eq:UT_first_growth}, and \eqref{eq:UT_second_growth}, we obtain
  \begin{align*}
    |b^\pi(\xi,\mu)\cdot\partial_\mu U_T^\pi(s,\mu)(\xi)|
    &\le C_{s,\mu}(1+|\xi|^2),\\
    |\Sigma^\pi(\xi,\mu):D_\xi\partial_\mu U_T^\pi(s,\mu)(\xi)|
    &\le C_{s,\mu}(1+|\xi|^2),
  \end{align*}
  with constants independent of $T$. Since
  $\mu\in\mathcal{P}_2(\mathbb{R}^d)$, the dominating function is
  $\mu$-integrable, and dominated convergence yields
  \begin{align}\label{eq:generator_convergence}
    \bigl(\mathcal{L}_{b,\Sigma}^\pi-\beta\bigr)U_T^\pi(s,\mu)
    \xrightarrow[T\to\infty]{}
    \bigl(\mathcal{L}_{b,\Sigma}^\pi-\beta\bigr)V^\pi(s,\mu).
  \end{align}
  Combining \eqref{eq:kolmogorov_at_t0}, \eqref{eq:boundary_term_to_zero},
  and \eqref{eq:generator_convergence}, and letting $T\to\infty$, we
  obtain
  \[
    (\mathcal{L}_{b,\Sigma}^{\pi}-\beta)V^\pi(s,\mu)
    +r_\lambda^\pi(s,\mu)=0.
  \]
  Thus $V^\pi$ is a classical solution of \eqref{eq:HJB-eval}.

  \smallskip
  \noindent\textbf{Step 6 (Uniqueness).}
  Let $U\in
    \mathcal{C}^{2,2}_{\mathrm{poly}}
    (\mathbb{R}^d\times\mathcal{P}_2(\mathbb{R}^d))$  be another classical solution of \eqref{eq:HJB-eval}. Fix
  $(s,\mu)\in\mathbb{R}^d\times\mathcal{P}_2(\mathbb{R}^d)$ and define
  the stopping times $\tau_n:=\inf\{t\ge0: |s_t|+|\tilde s_t|\ge n\},$ with $ n\in\mathbb{N}.$ Applying the It\^o-Lions formula to
  $e^{-\beta(t\wedge\tau_n)}U(s_{t\wedge\tau_n},\mu_{t\wedge\tau_n})$ on
  $[0,T]$, taking expectations, and using \eqref{eq:HJB-eval}, we obtain
  \begin{align}\label{eq:U_stoping_times}
    U(s,\mu)
    &=
    \mathbb{E}^{s,\mu,\pi}\!\left[
      e^{-\beta(T\wedge\tau_n)}
      U(s_{T\wedge\tau_n},\mu_{T\wedge\tau_n})
    \right]
    +
    \mathbb{E}^{s,\mu,\pi}\!\left[
      \int_0^{T\wedge\tau_n}
      e^{-\beta t}r_\lambda^\pi(s_t,\mu_t)\,\d t
    \right].
  \end{align}
  Since
  $U\in\mathcal{C}^{2,2}_{\mathrm{poly}}
  (\mathbb{R}^d\times\mathcal{P}_2(\mathbb{R}^d))$, together with \eqref{eq:moment_growth_representative}, we have
  \[
    e^{-\beta T}
    \mathbb{E}^{s,\mu,\pi}\!\left[|U(s_T,\mu_T)|\right]
    \le
    C e^{-(\beta-\beta_0(2))T}\bigl(1+|s|^2+m_2(\mu)\bigr),
  \]
  which tends to zero as $T\to\infty$. Letting first $n\to\infty$ in \eqref{eq:U_stoping_times}, and
  then $T\to\infty$, and using dominated convergence together with
  \eqref{eq:dominating_r_final}, we conclude that
  \[
    U(s,\mu)
    =
    \mathbb{E}^{s,\mu,\pi}\!\left[
      \int_0^\infty e^{-\beta t}r_\lambda^\pi(s_t,\mu_t)\,\d t
    \right]
    =
    V^\pi(s,\mu).
  \]
  Hence $U=V^\pi$, and the classical solution of \eqref{eq:HJB-eval} is
  unique in the class
  $\mathcal{C}^{2,2}_{\mathrm{poly}}
  (\mathbb{R}^d\times\mathcal{P}_2(\mathbb{R}^d))$.
\end{proof}
    
    \subsection{Proofs for \Cref{sec:policy-iteration}}\label{APP:PROOFS_SEC_PG}
    
We turn to the main policy gradient theorem. A technical ingredient
is needed: a stability estimate that controls the first-order response
of the controlled system to a policy perturbation.
\begin{lemma}[Stability with respect to policy perturbations]
\label[lemma]{lem:stability-policy}
Suppose \Cref{A1} holds. Let $\pi$ be a fixed feedback policy and
$\varphi$ a signed kernel satisfying \eqref{eq:signed-kernel-zero-mass}
with $M_\psi := \|\psi\|_\infty < \infty$. For $|\varepsilon|$
small enough that $\pi^\varepsilon$ is admissible, let
$(s_t^\varepsilon,\mu_t^\varepsilon)$ and $(s_t^\pi,\mu_t^\pi)$ be the
solutions of \eqref{eq:rep_SDE}-\eqref{eq:MKV_SDE} under $\pi^\varepsilon$
and $\pi$ respectively, with the same initial condition $s_0\sim\mu_0$,
$\mu_0^\varepsilon=\mu_0^\pi=\mu$, and the same driving Brownian motions.
Then there exist constants $C>0$ and $\beta_{\mathrm{stab}}>0$,
depending only on $L_\pi$, $M_\psi$, and $\beta_0(2)$, such that
for every $t\geq0$,
\begin{equation}\label{eq:stability-policy}
    \mathbb{E}\bigl[|s_t^\varepsilon-s_t^\pi|^2\bigr]
    +\mathcal{W}_2^2(\mu_t^\varepsilon,\mu_t^\pi)
    \leq C\,\varepsilon^2\,e^{\beta_{\mathrm{stab}}\,t}
    \bigl(1+|s|^2+m_2(\mu)\bigr).
\end{equation}
In particular, $\mathbb{E}[|s_t^\varepsilon-s_t^\pi|^2]
+\mathcal{W}_2^2(\mu_t^\varepsilon,\mu_t^\pi)\to0$
as $\varepsilon\to0$, for each fixed $t\geq0$.
\end{lemma}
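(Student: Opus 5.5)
We argue by a synchronous coupling and a Grönwall estimate, first for the population component and then for the representative one. The starting observation is that the perturbation enters the averaged coefficients linearly:
\[
b^{\pi^\varepsilon}(x,\nu)-b^{\pi}(x,\nu)=\varepsilon\int_{\mathcal A} b(x,\nu,a)\,\psi(x,\nu,a)\,\pi(\d a\mid x,\nu),
\]
and similarly $\Sigma^{\pi^\varepsilon}-\Sigma^\pi=\varepsilon\int\sigma\sigma^\top\psi\,\d\pi$. We will bound these differences by $C\varepsilon(1+|x|+\sqrt{m_2(\nu)})$ and $C\varepsilon(1+|x|^2+m_2(\nu))$, respectively. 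Here we use $|\psi|\le M_\psi$ together with the linear growth of the averaged coefficients in Remark \ref{rem:linear_growth}. Strictly speaking, this requires a linear-growth bound on the integrand $\int|b|\,\d\pi$ rather than only on $b^\pi$; we take this as implied by the standing assumptions, or else absorb it into $M_\psi$.

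\textbf{The square-root step.} For the diffusion we need a bound on $\sigma^{\pi^\varepsilon}-\sigma^\pi$, not only on $\Sigma^{\pi^\varepsilon}-\Sigma^\pi$. We expect this to be the main obstacle, because the matrix square root is only $1/2$-Hölder near degenerate matrices, which would lose the $\varepsilon^2$ rate. The first thing to try is to bypass the square root altogether by writing the perturbation as an interpolation $\pi^{\theta\varepsilon}$, $\theta\in[0,1]$. Assumption \ref{A1} then holds uniformly along this path, giving a uniform Lipschitz constant $L_\pi$ for every $\sigma^{\pi^{\theta\varepsilon}}$. The aim is a bound $\|\sigma^{\pi^\varepsilon}-\sigma^\pi\|\le C|\varepsilon|(1+|x|+\sqrt{m_2(\nu)})$, obtained by differentiating $\theta\mapsto\sigma^{\pi^{\theta\varepsilon}}$ via the bounded derivatives in \ref{A1}. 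As a fallback, we would assume uniform ellipticity on bounded sets, where the square root is smooth.

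\textbf{Population step.} Couple $\tilde s^\varepsilon_t$ and $\tilde s^\pi_t$ through the same $\tilde s_0$ and the same $\tilde B$, and apply Itô's formula to $|\tilde s^\varepsilon_t-\tilde s^\pi_t|^2$. Split each coefficient difference as
\[
\bigl[h^{\pi^\varepsilon}(\tilde s^\varepsilon,\mu^\varepsilon)-h^{\pi}(\tilde s^\varepsilon,\mu^\varepsilon)\bigr]+\bigl[h^{\pi}(\tilde s^\varepsilon,\mu^\varepsilon)-h^{\pi}(\tilde s^\pi,\mu^\pi)\bigr],\qquad h\in\{b,\sigma\}.
\]
The second bracket is controlled by $L_\pi\bigl(|\tilde s^\varepsilon-\tilde s^\pi|+\mathcal W_2(\mu^\varepsilon_t,\mu^\pi_t)\bigr)$, and the coupling gives $\mathcal W_2^2(\mu^\varepsilon_t,\mu^\pi_t)\le\mathbb E|\tilde s^\varepsilon_t-\tilde s^\pi_t|^2$. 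The first bracket is $O(\varepsilon)$ times linear growth. Its second moment is bounded using Lemma \ref{lem:moment-growth-clean} with $p=2$, applied to $\pi^\varepsilon$ with constants uniform in $\varepsilon$ thanks to the uniform \ref{A1}, which gives $C\varepsilon^2 e^{\beta_0(2)t}(1+m_2(\mu))$. Young's inequality and Grönwall then yield
\[
\mathbb E|\tilde s^\varepsilon_t-\tilde s^\pi_t|^2\le C\varepsilon^2 e^{(C L_\pi^2+\beta_0(2))t}\bigl(1+m_2(\mu)\bigr).
\]
The same bound therefore holds for $\mathcal W_2^2(\mu^\varepsilon_t,\mu^\pi_t)$.

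\textbf{Representative step.} Repeat the argument for $s^\varepsilon_t-s^\pi_t$, coupled through the same $s_0\sim\mu_0$ and the same $B$. The measure difference entering through $L_\pi\,\mathcal W_2(\mu^\varepsilon_t,\mu^\pi_t)$ is now a known source, controlled by the population step. Grönwall once more gives the estimate with $\beta_{\mathrm{stab}}:=CL_\pi^2+\beta_0(2)$ and the factor $1+|s|^2+m_2(\mu)$. If $s_0$ is random, one integrates in $\mu_0$, which gives the factor $1+m_2(\mu_0)+m_2(\mu)$; we read $|s|^2$ accordingly. Throughout, localization by stopping times is handled exactly as in Lemma \ref{lem:moment-growth-clean}. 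The final convergence claim follows immediately, since for fixed $t$ the right-hand side is $O(\varepsilon^2)$.
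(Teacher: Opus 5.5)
Your coupling and Gr\"onwall skeleton matches the paper's proof. Both use the same splitting of each coefficient difference into an $O(\varepsilon)$ policy part plus a Lipschitz part, then $\mathcal W_2^2(\mu^\varepsilon_t,\mu^\pi_t)\le\mathbb E|\tilde s^\varepsilon_t-\tilde s^\pi_t|^2$, then Gr\"onwall. Treating the population before the representative, instead of summing the two inequalities, is immaterial.

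The gap is exactly at the step you flagged, the square root. Your primary route is not justified. Even if Assumption \ref{A1} holds uniformly along $\theta\mapsto\pi^{\theta\varepsilon}$, it only controls derivatives of $\sigma^{\pi^{\theta\varepsilon}}$ in $(s,\mu,\xi)$. It gives no information on $\partial_\theta\sigma^{\pi^{\theta\varepsilon}}$. That derivative is the derivative of the matrix square root of $\Sigma^\pi+\theta\varepsilon\Sigma^\varphi$, where $\Sigma^\varphi:=\int\sigma\sigma^\top\,\d\varphi$. For a generic perturbation of $\Sigma^\pi$ it is of order $\varepsilon\Sigma^\varphi/(2\sqrt{\Sigma^\pi})$ and blows up where $\Sigma^\pi$ degenerates. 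Nothing in your argument uses the structure that prevents this.

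The fallback does not close the gap either. It adds an ellipticity hypothesis the lemma does not have, and the square root is then only locally Lipschitz, with a constant depending on the ellipticity bound. Even under global ellipticity $\Sigma\succeq cI$ you would get $\|\sigma^{\pi^\varepsilon}-\sigma^\pi\|\le\|\Sigma^{\pi^\varepsilon}-\Sigma^\pi\|/(2\sqrt c)\lesssim|\varepsilon|(1+|x|^2+m_2(\nu))$. Its square requires fourth moments, which destroys the factor $1+|s|^2+m_2(\mu)$.

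The missing idea is to use the multiplicative form of the perturbation together with positivity of $\sigma\sigma^\top$. Since $\pi^\varepsilon=(1+\varepsilon\psi)\pi$ with $|\varepsilon\psi|\le M_\psi|\varepsilon|<1$, you have $z^\top\Sigma^{\pi^\varepsilon}z=\int z^\top\sigma\sigma^\top z\,(1+\varepsilon\psi)\,\d\pi$ for every $z$. Hence
\[
(1-M_\psi|\varepsilon|)\,\Sigma^\pi\preceq\Sigma^{\pi^\varepsilon}\preceq(1+M_\psi|\varepsilon|)\,\Sigma^\pi .
\]
Operator monotonicity of $A\mapsto A^{1/2}$ on positive semidefinite matrices then gives $\sqrt{1-M_\psi|\varepsilon|}\,\sigma^\pi\preceq\sigma^{\pi^\varepsilon}\preceq\sqrt{1+M_\psi|\varepsilon|}\,\sigma^\pi$. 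So $-M_\psi|\varepsilon|\,\sigma^\pi\preceq\sigma^{\pi^\varepsilon}-\sigma^\pi\preceq M_\psi|\varepsilon|\,\sigma^\pi$, and $\|\sigma^{\pi^\varepsilon}-\sigma^\pi\|\le M_\psi|\varepsilon|\,\|\sigma^\pi\|$, up to a dimensional constant for the Frobenius norm.

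This requires no ellipticity. It also inherits the linear growth of $\sigma^\pi$ from Remark \ref{rem:linear_growth}, which keeps the estimate at the level of second moments. Any repair of your $\theta$-derivative route would have to use this same relative bound rather than Assumption \ref{A1}. With it, the rest of your argument goes through.

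Your side remark on the drift is fair: that term needs linear growth of $\int|b|\,\d\pi$, not just of $b^\pi$. That bound is not implied by Assumption \ref{A1} as stated, and the paper's proof tacitly uses it as well.
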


\begin{proof}
Set $\delta s_t := s_t^\varepsilon - s_t^\pi$ and
$\delta\tilde{s}_t := \tilde{s}_t^\varepsilon - \tilde{s}_t^\pi$.
Since both pairs of processes are driven by the same Brownian motions
$B$ and $\tilde{B}$, the differences satisfy
\begin{equation*}
    \d(\delta s_t)
    = \bigl(b^{\pi^\varepsilon}(s_t^\varepsilon,\mu_t^\varepsilon)
           - b^\pi(s_t^\pi,\mu_t^\pi)\bigr)\,\d t
    + \bigl(\sigma^{\pi^\varepsilon}(s_t^\varepsilon,\mu_t^\varepsilon)
           - \sigma^\pi(s_t^\pi,\mu_t^\pi)\bigr)\,\d B_t,
\end{equation*}
and analogously for $\delta\tilde{s}_t$.

\medskip
\noindent\textbf{Step 1: Decomposition of the coefficient differences.} By linearity of the averaged coefficients in the control measure:
\begin{equation}\label{eq:drift-decomp}
    b^{\pi^\varepsilon}(s^\varepsilon,\mu^\varepsilon) - b^\pi(s^\pi,\mu^\pi)
    = \varepsilon\,b^\varphi(s^\varepsilon,\mu^\varepsilon)
    + \bigl[b^\pi(s^\varepsilon,\mu^\varepsilon)
            - b^\pi(s^\pi,\mu^\pi)\bigr],
\end{equation}
where $b^\varphi(s,\mu) := \int_\mathcal{A} b(s,\mu,a)\,\varphi(\d a|s,\mu)$.
By the linear growth of $b$ in \Cref{A1} and the zero-mass
condition \eqref{eq:signed-kernel-zero-mass},
\begin{equation*}
    |b^\varphi(s,\mu)| \leq C_b\bigl(1+|s|+\sqrt{m_2(\mu)}\bigr),
\end{equation*}
and by the Lipschitz condition of \Cref{A1}, the second term
of \eqref{eq:drift-decomp} is bounded by
$L_\pi(|\delta s|+\mathcal{W}_2(\mu^\varepsilon,\mu^\pi))$.

For the diffusion coefficient, write
\begin{equation}\label{eq:sigma-decomp}
    \sigma^{\pi^\varepsilon}(s^\varepsilon,\mu^\varepsilon)
    - \sigma^\pi(s^\pi,\mu^\pi)
    = \bigl[\sigma^\pi(s^\varepsilon,\mu^\varepsilon)
            - \sigma^\pi(s^\pi,\mu^\pi)\bigr]
    + \bigl[\sigma^{\pi^\varepsilon}(s^\varepsilon,\mu^\varepsilon)
            - \sigma^\pi(s^\varepsilon,\mu^\varepsilon)\bigr].
\end{equation}
The first bracket is bounded by
$L_\pi(|\delta s|+\mathcal{W}_2(\mu^\varepsilon,\mu^\pi))$
by \Cref{A1}. For the second bracket, since
$\pi^\varepsilon = (1+\varepsilon\psi)\pi$ with $|\varepsilon|M_\psi < 1$,
we have for every $(s,\mu)$ and every $z\in\mathbb{R}^d$:
\[
z^\top\Sigma^{\pi^\varepsilon}(s,\mu)z
= \int_{\mathcal{A}}z^\top\Sigma(s,\mu,a)z\,
\bigl(1+\varepsilon\psi(s,\mu,a)\bigr)\,\pi(\d a\mid s,\mu),
\]
from which
\begin{equation}\label{eq:sigma-sandwich}
    (1-M_\psi|\varepsilon|)\,\Sigma^\pi(s,\mu)
    \leq \Sigma^{\pi^\varepsilon}(s,\mu)
    \leq (1+M_\psi|\varepsilon|)\,\Sigma^\pi(s,\mu).
\end{equation}
Since the map $A\mapsto A^{1/2}$ is operator-monotone on the cone of
symmetric positive semidefinite matrices (a consequence of the
Löwner-Heinz inequality, which holds for any semidefinite matrix), taking square roots
in \eqref{eq:sigma-sandwich} gives
\[
\sqrt{1-M_\psi|\varepsilon|}\,\sigma^\pi(s,\mu)
\leq \sigma^{\pi^\varepsilon}(s,\mu)
\leq \sqrt{1+M_\psi|\varepsilon|}\,\sigma^\pi(s,\mu).
\]
Using the elementary inequalities $1-\sqrt{1-x}\leq x$ and
$\sqrt{1+x}-1\leq x$ for $x\in[0,1)$, we conclude that
\[
\|\sigma^{\pi^\varepsilon}(s,\mu)-\sigma^\pi(s,\mu)\|
\leq M_\psi|\varepsilon|\,\|\sigma^\pi(s,\mu)\|.
\]
By the linear growth of $\sigma^\pi$ from \Cref{A1}:
\begin{equation}\label{eq:sigma-eps-diff}
    \|\sigma^{\pi^\varepsilon}(s,\mu)-\sigma^\pi(s,\mu)\|^2
    \leq C\,\varepsilon^2\bigl(1+|s|^2+m_2(\mu)\bigr).
\end{equation}
Note that \eqref{eq:sigma-eps-diff} holds without any ellipticity
assumption on $\Sigma^\pi$; the sandwich argument requires only that
$\Sigma^\pi$ be positive semidefinite, which holds automatically.
Combining both brackets in \eqref{eq:sigma-decomp}, we have
\begin{equation}\label{eq:sigma-full-diff}
    \bigl\|\sigma^{\pi^\varepsilon}(s^\varepsilon,\mu^\varepsilon)
    -\sigma^\pi(s^\pi,\mu^\pi)\bigr\|^2
    \leq C\Bigl(|\delta s|^2+\mathcal{W}_2^2(\mu^\varepsilon,\mu^\pi)
    +\varepsilon^2\bigl(1+|s^\varepsilon|^2+m_2(\mu^\varepsilon)\bigr)\Bigr).
\end{equation}

\medskip
\noindent\textbf{Step 2: It\^{o} formula, Gr\"{o}nwall, and exponential bound.} Applying It\^{o}'s formula to $|\delta s_t|^2$, taking expectations
(the stochastic integral is a true martingale by the moment estimates
of Lemma \ref{lem:moment-growth-clean}), and using Young's inequality
together with \eqref{eq:drift-decomp}-\eqref{eq:sigma-full-diff}, we obtain
\begin{align*}
    \frac{\d}{\d t}\mathbb{E}\bigl[|\delta s_t|^2\bigr]
    \leq C_1\,\mathbb{E}\bigl[|\delta s_t|^2\bigr]
    + C_1\,\mathcal{W}_2^2(\mu_t^\varepsilon,\mu_t^\pi)
    + C_1\,\varepsilon^2\,
      \mathbb{E}\bigl[1+|s_t^\pi|^2+m_2(\mu_t^\pi)\bigr],
\end{align*}
where $C_1>0$ depends only on $L_\pi$ and $M_\psi$. An identical estimate
holds for $\delta\tilde{s}_t$. Since
$\mathcal{W}_2^2(\mu_t^\varepsilon,\mu_t^\pi)\leq\mathbb{E}[|\delta\tilde{s}_t|^2]$,
setting $\Phi_t := \mathbb{E}[|\delta s_t|^2] + \mathbb{E}[|\delta\tilde{s}_t|^2]$
and summing the two estimates gives
\begin{equation}\label{eq:Phi-gronwall}
    \frac{\d}{\d t}\Phi_t
    \leq C_2\,\Phi_t
    + C_2\,\varepsilon^2\,
      \mathbb{E}\bigl[1+|s_t^\pi|^2+|\tilde{s}_t^\pi|^2+m_2(\mu_t^\pi)\bigr],
\end{equation}
where $C_2$ depends only on $C_1$. By Lemma \ref{lem:moment-growth-clean}
with $p=2$, the factor on the right-hand side satisfies
\[
    \mathbb{E}\bigl[1+|s_t^\pi|^2+|\tilde{s}_t^\pi|^2+m_2(\mu_t^\pi)\bigr]
    \leq Ce^{\beta_0(2)t}\bigl(1+|s|^2+m_2(\mu)\bigr).
\]
Since $\Phi_0=0$, Gr\"{o}nwall's inequality applied
to \eqref{eq:Phi-gronwall} gives
\[
    \Phi_t
    \leq C_2\varepsilon^2(1+|s|^2+m_2(\mu))
    \int_0^t e^{C_2(t-r)}\,C\,e^{\beta_0(2)r}\,\d r
    \leq C'\varepsilon^2 e^{\max(C_2,\,\beta_0(2))\,t}
    \bigl(1+|s|^2+m_2(\mu)\bigr),
\]
where the last step uses $e^{C_2(t-r)}e^{\beta_0(2)r}\leq
e^{\max(C_2,\beta_0(2))t}$ for $r\in[0,t]$.
Setting $\beta_{\mathrm{stab}}:=\max(C_2,\,\beta_0(2))$ and recalling
$\mathcal{W}_2^2(\mu_t^\varepsilon,\mu_t^\pi)\leq\mathbb{E}[|\delta\tilde{s}_t|^2]
\leq\Phi_t$ gives \eqref{eq:stability-policy}.
\end{proof}

We are now in a position to prove \Cref{thm:policy-gradient-theorem}.

\begin{proof}[\textbf{Proof of Theorem \ref{thm:policy-gradient-theorem}}]

Since $\beta > \widetilde\beta_\pi \geq \bar\beta_\pi$, Theorem \ref{thm:MF_eval_infinite} applies to both $\pi$ and $\pi^\varepsilon$, giving $V^\pi, V^{\pi^\varepsilon} \in \mathcal{C}^{2,2}_{\mathrm{poly}}$ with polynomial-growth constant bounded by $C/(\beta - \bar\beta_\pi)$ uniformly in $|\varepsilon| \leq \varepsilon_0$.

We fix $s \in \mathbb{R}^d$ and write $(s_t^\pi, \mu_t^\pi)_{t \geq 0}$ and $(s_t^\varepsilon, \mu_t^\varepsilon)_{t \geq 0}$ for the representative process and population flow under $\pi$ and $\pi^\varepsilon$ respectively, both starting from $(s,\mu)$. Both value functions admit the stochastic representations
\begin{align}\label{eq:rep-Vpi}
    V^{\pi^\varepsilon}(s,\mu) &= \int_0^\infty e^{-\beta t}
    \mathbb{E}^{s,\mu,\pi^\varepsilon}\!\bigl[r_\lambda^{\pi^\varepsilon}
    (s_t^\varepsilon,\mu_t^\varepsilon)\bigr]\,\d t,\qquad
    V^\pi(s,\mu) = \int_0^\infty e^{-\beta t}
    \mathbb{E}^{s,\mu,\pi}\!\bigl[r_\lambda^\pi(s_t^\pi,\mu_t^\pi)\bigr]\,\d t,
\end{align}
both convergent since $\beta > \beta_0(2)$. We split the difference quotient as
\begin{equation}\label{eq:split}
    \frac{V^{\pi^\varepsilon}(s,\mu) - V^\pi(s,\mu)}{\varepsilon}
    = A(\varepsilon;s) + B(\varepsilon;s),
\end{equation}
where
\begin{align*}
    A(\varepsilon;s) &:= \int_0^\infty e^{-\beta t}
    \frac{\mathbb{E}^{s,\mu,\pi^\varepsilon}\!\bigl[
    r_\lambda^{\pi^\varepsilon}(s_t^\varepsilon,\mu_t^\varepsilon)\bigr]
    -\mathbb{E}^{s,\mu,\pi^\varepsilon}\!\bigl[
    r_\lambda^\pi(s_t^\varepsilon,\mu_t^\varepsilon)\bigr]}
    {\varepsilon}\,\d t,
   \\
    B(\varepsilon;s) &:= \int_0^\infty e^{-\beta t}
    \frac{\mathbb{E}^{s,\mu,\pi^\varepsilon}\!\bigl[
    r_\lambda^\pi(s_t^\varepsilon,\mu_t^\varepsilon)\bigr]
    -\mathbb{E}^{s,\mu,\pi}\!\bigl[
    r_\lambda^\pi(s_t^\pi,\mu_t^\pi)\bigr]}
    {\varepsilon}\,\d t.
\end{align*}
Intuitively, $A(\varepsilon;s)$ captures the sensitivity of the
reward functional to the change in policy. In contrast,
$B(\varepsilon;s)$ captures the effect of the change in dynamics.

\medskip
\noindent\textbf{Step 1: We compute the limit of $A(\varepsilon;s)$.} We first identify the derivative of $\varepsilon\mapsto r_\lambda^{\pi^\varepsilon}(x,\nu)$ at $\varepsilon=0$. Writing $p_{\pi^\varepsilon}(a|x,\nu) = (1+\varepsilon\psi(x,\nu,a))\,p_\pi(a|x,\nu)$, the reward part differentiates as
\[
    \frac{d}{d\varepsilon}\bigg|_{\varepsilon=0}
    \int_\mathcal{A} r(x,\nu,a)\,\pi^\varepsilon(\d a|x,\nu)
    = \int_\mathcal{A} r(x,\nu,a)\,\varphi(\d a|x,\nu).
\]
For the entropy part, the identity $\frac{\d}{\d f}(f\log f) = \log f + 1$ applied to $f = p_{\pi^\varepsilon}(a|x,\nu)$ gives
\[
    \frac{\d}{\d\varepsilon}\bigg|_{\varepsilon=0}
    \left[-\lambda\int_\mathcal{A} p_{\pi^\varepsilon}\log p_{\pi^\varepsilon}\,\nu(\d a)\right]
    = -\lambda\int_\mathcal{A}\bigl(\log p_\pi(a|x,\nu)+1\bigr)\,\varphi(\d a|x,\nu).
\]
Since $\int_\mathcal{A}\varphi(da|x,\nu) = 0$ by \eqref{eq:signed-kernel-zero-mass}, the constant term vanishes. Combining both parts yields
\begin{equation}\label{eq:r-lambda-deriv}
    \frac{\d}{\d\varepsilon}\bigg|_{\varepsilon=0}
    r_\lambda^{\pi^\varepsilon}(x,\nu)
    = \int_\mathcal{A}\bigl(r(x,\nu,a)-\lambda\log p_\pi(a|x,\nu)\bigr)\,
    \varphi(\d a|x,\nu)
    =: r_\lambda^\varphi(x,\nu).
\end{equation}
By the mean value theorem applied to the differentiable map $\varepsilon\mapsto r_\lambda^{\pi^\varepsilon}(x,\nu)$ and \Cref{A1} (with constants uniform in $|\varepsilon|\leq\varepsilon_0$), the ratio satisfies
\begin{equation*}
    \left|\frac{r_\lambda^{\pi^\varepsilon}(x,\nu)
    -r_\lambda^\pi(x,\nu)}{\varepsilon}\right|
    \leq M_\psi C_r\bigl(1+|x|^2+m_2(\nu)\bigr),
    \qquad |\varepsilon|\leq\varepsilon_0,
\end{equation*}
with $C_r > 0$ independent of $\varepsilon$. Taking expectations against the $\pi^\varepsilon$-dynamics, using Lemma \ref{lem:moment-growth-clean} with $p=2$, and noting that $m_2(\mu_t^\varepsilon)$ is deterministic since there is no common noise, we obtain
\begin{equation}\label{eq:A-dominated}
    e^{-\beta t}\left|\frac{\mathbb{E}^{s,\mu,\pi^\varepsilon}[r_\lambda^{\pi^\varepsilon}
    (s_t^\varepsilon,\mu_t^\varepsilon)]
    -\mathbb{E}^{s,\mu,\pi^\varepsilon}[r_\lambda^\pi
    (s_t^\varepsilon,\mu_t^\varepsilon)]}{\varepsilon}\right|
    \leq M_\psi C_r\,
    e^{-(\beta-\beta_0(2))t}\bigl(1+|s|^2+m_2(\mu)\bigr),
\end{equation}
which is integrable since $\beta > \beta_0(2)$. We now split $A(\varepsilon;s)$ as $A(\varepsilon;s) = A_1(\varepsilon;s) + A_2(\varepsilon;s),$ where
\begin{align*}
    A_1(\varepsilon;s) &:= \int_0^\infty e^{-\beta t}
    \mathbb{E}^{s,\mu,\pi^\varepsilon}\!\left[
    \frac{r_\lambda^{\pi^\varepsilon}(s_t^\varepsilon,\mu_t^\varepsilon)
    -r_\lambda^\pi(s_t^\varepsilon,\mu_t^\varepsilon)}{\varepsilon}
    -r_\lambda^\varphi(s_t^\varepsilon,\mu_t^\varepsilon)
    \right]\d t,\\
    A_2(\varepsilon;s) &:= \int_0^\infty e^{-\beta t}
    \mathbb{E}^{s,\mu,\pi^\varepsilon}\!\bigl[
    r_\lambda^\varphi(s_t^\varepsilon,\mu_t^\varepsilon)\bigr]\,\d t.
\end{align*}
The integrand of $A_1$ converges pointwise to zero as $\varepsilon\to0$ by \eqref{eq:r-lambda-deriv}, and is dominated by twice the right-hand side of \eqref{eq:A-dominated}, so dominated convergence gives $A_1(\varepsilon;s)\to0$ as $\varepsilon\to0$.

For $A_2$, fix $t\geq 0$. Then, for each $\theta\in[0,1]$, define $\tilde s_t^\theta:=(1-\theta)\tilde s_t^\pi+\theta \tilde s_t^\varepsilon,
$ and $
\mu_t^\theta:=\Law(\tilde s_t^\theta).$ Since $r_\lambda^\varphi\in\mathcal{C}^{2,2}_{\mathrm{poly}}$, we have
\begin{align*}
    \bigl|\mathbb{E}[&r_\lambda^\varphi(s_t^\varepsilon,\mu_t^\varepsilon)]
    -\mathbb{E}[r_\lambda^\varphi(s_t^\pi,\mu_t^\pi)]\bigr|
    \leq \mathbb{E}\Bigl[\bigl|r_\lambda^\varphi(s_t^\varepsilon,\mu_t^\varepsilon)
    -r_\lambda^\varphi(s_t^\pi,\mu_t^\varepsilon)\bigr|\Bigr]
    +\mathbb{E}\Bigl[\bigl|r_\lambda^\varphi(s_t^\pi,\mu_t^\varepsilon)
    -r_\lambda^\varphi(s_t^\pi,\mu_t^\pi)\bigr|\Bigr]\\
    &\leq \mathbb{E}\Bigl[
    |s_t^\varepsilon-s_t^\pi|
    \int_0^1
    \bigl|\nabla_s r_\lambda^\varphi\bigl(
    s_t^\pi+\theta(s_t^\varepsilon-s_t^\pi),\mu_t^\varepsilon
    \bigr)\bigr|\,\d\theta\Bigr]+\int_0^1
    \mathbb{E}\Bigl[
    \bigl|
    \partial_\mu r_\lambda^\varphi(s_t^\pi,\mu_t^\theta)(\tilde s_t^\theta)
    \bigr|
    \,|\tilde s_t^\varepsilon-\tilde s_t^\pi|
    \Bigr]\,\d\theta\\
    &\leq C\,\mathbb{E}\Bigl[
    |s_t^\varepsilon-s_t^\pi|
    \bigl(1+|s_t^\varepsilon|+|s_t^\pi|+m_2(\mu_t^\varepsilon)^{1/2}\bigr)\Bigr] +C\int_0^1
    \mathbb{E}\Bigl[
    \bigl(1+|s_t^\pi|+|\tilde s_t^\theta|+m_2(\mu_t^\theta)^{1/2}\bigr)
    |\tilde s_t^\varepsilon-\tilde s_t^\pi|
    \Bigr]\,\d\theta\\
    &\leq C\,\mathbb{E}[|s_t^\varepsilon-s_t^\pi|^2]^{1/2}
    \Bigl(
    1+\mathbb{E}[|s_t^\varepsilon|^2]^{1/2}
    +\mathbb{E}[|s_t^\pi|^2]^{1/2}
    +m_2(\mu_t^\varepsilon)^{1/2}
    \Bigr)\\
    &\qquad +C\int_0^1
    \Bigl(
    1+\mathbb{E}[|s_t^\pi|^2]^{1/2}
    +2\,m_2(\mu_t^\theta)^{1/2}
    \Bigr)
    \mathcal W_2(\mu_t^\varepsilon,\mu_t^\pi)\,\d\theta\\
    &\leq C\,\mathbb{E}[|s_t^\varepsilon-s_t^\pi|^2]^{1/2}
    \Bigl(
    1+\mathbb{E}[|s_t^\varepsilon|^2]^{1/2}
    +\mathbb{E}[|s_t^\pi|^2]^{1/2}
    +m_2(\mu_t^\varepsilon)^{1/2}
    \Bigr)\\
    &\qquad +C\Bigl(
    1+\mathbb{E}[|s_t^\pi|^2]^{1/2}
    +m_2(\mu_t^\pi)^{1/2}
    +m_2(\mu_t^\varepsilon)^{1/2}
    \Bigr)\mathcal W_2(\mu_t^\varepsilon,\mu_t^\pi).
\end{align*}
By Lemma \ref{lem:stability-policy}, for each fixed $t\geq0$ the right-hand side tends to $0$ as $\varepsilon\to0$. Hence $\mathbb{E}^{s,\mu,\pi^\varepsilon}\!\bigl[
    r_\lambda^\varphi(s_t^\varepsilon,\mu_t^\varepsilon)\bigr]
    \to
    \mathbb{E}^{s,\mu,\pi}\!\bigl[
    r_\lambda^\varphi(s_t^\pi,\mu_t^\pi)\bigr]$ as $\varepsilon\to0$. The dominating function for dominated convergence over $t$ is the same as in \eqref{eq:A-dominated} (with $r_\lambda^\varphi$ in place of the ratio), and combining both terms gives
\begin{equation}\label{eq:A-limit}
    \lim_{\varepsilon\to0} A(\varepsilon;s)
    = \int_0^\infty e^{-\beta t}
    \mathbb{E}^{s,\mu,\pi}\!\bigl[r_\lambda^\varphi(s_t^\pi,\mu_t^\pi)\bigr]\,\d t.
\end{equation}

\medskip
\noindent\textbf{Step 2: We derive an exact formula for $B(\varepsilon;s)$.} We apply the It\^o-Lions formula to $V^\pi$ along the $\pi^\varepsilon$-dynamics. For $n\geq1$, let $\tau_n^\varepsilon := \inf\{t\geq0 : |s_t^\varepsilon|\geq n\}$. Applying the It\^o-Lions formula to $t\mapsto e^{-\beta(t\wedge\tau_n^\varepsilon)} V^\pi(s_{t\wedge\tau_n^\varepsilon}^\varepsilon,\mu_{t\wedge\tau_n^\varepsilon}^\varepsilon)$ and taking expectations (the stochastic integral has zero expectation since $|s_u^\varepsilon|\leq n$ on $[0,T\wedge\tau_n^\varepsilon]$ ensures the integrand is bounded) yields
\begin{align*}
    \mathbb{E}\bigl[e^{-\beta(T\wedge\tau_n^\varepsilon)}
    V^\pi(s_{T\wedge\tau_n^\varepsilon}^\varepsilon,
    \mu_{T\wedge\tau_n^\varepsilon}^\varepsilon)\bigr]
    - V^\pi(s,\mu)
    = \mathbb{E}\!\left[\int_0^{T\wedge\tau_n^\varepsilon}
    e^{-\beta t}(\mathcal{L}^{\pi^\varepsilon}-\beta)
    V^\pi(s_t^\varepsilon,\mu_t^\varepsilon)\,\d t\right].
\end{align*}
Since $\mathcal{L}^{\pi^\varepsilon} = \mathcal{L}^\pi + \varepsilon\mathcal{L}^\varphi$ by linearity of the generator in the averaged coefficients, and since $(\mathcal{L}^\pi-\beta)V^\pi = -r_\lambda^\pi$ by the HJB equation \eqref{eq:HJB-eval}, this simplifies to
\begin{equation*}
    \mathbb{E}\bigl[e^{-\beta(T\wedge\tau_n^\varepsilon)}
    V^\pi(s_{T\wedge\tau_n^\varepsilon}^\varepsilon,
    \mu_{T\wedge\tau_n^\varepsilon}^\varepsilon)\bigr]
    - V^\pi(s,\mu)
    = \mathbb{E}\!\left[\int_0^{T\wedge\tau_n^\varepsilon}
    e^{-\beta t}\bigl(-r_\lambda^\pi
    +\varepsilon\mathcal{L}^\varphi V^\pi\bigr)
    (s_t^\varepsilon,\mu_t^\varepsilon)\,\d t\right].
\end{equation*}
As $n\to\infty$, $\tau_n^\varepsilon\to\infty$ almost surely. Since $|V^\pi(x,\nu)|\leq C(1+|x|^2+m_2(\nu))$ and $\mathbb{E}[\sup_{t\leq T}|s_t^\varepsilon|^2]<\infty$ by \eqref{eq:finite_T_estimation_s}, dominated convergence gives
\begin{equation}\label{eq:ito-no-tau}
    \mathbb{E}\bigl[e^{-\beta T}V^\pi(s_T^\varepsilon,\mu_T^\varepsilon)\bigr]
    - V^\pi(s,\mu)
    = \int_0^T e^{-\beta t}\mathbb{E}\bigl[(-r_\lambda^\pi
    +\varepsilon\mathcal{L}^\varphi V^\pi)
    (s_t^\varepsilon,\mu_t^\varepsilon)\bigr]\,\d t.
\end{equation}
Since $V^\pi\in\mathcal{C}^{2,2}_{\mathrm{poly}}$ and $\beta>\beta_0(2)$, the boundary term satisfies
\[
    e^{-\beta T}\mathbb{E}\bigl[|V^\pi(s_T^\varepsilon,\mu_T^\varepsilon)|\bigr]
    \leq Ce^{-(\beta-\beta_0(2))T}(1+|s|^2+m_2(\mu))
    \to 0
    \qquad\text{as }T\to\infty.
\]
For the right-hand side, since $m_2(\mu_t^\varepsilon)$ is deterministic and Lemma \ref{lem:moment-growth-clean} gives $\mathbb{E}[|s_t^\varepsilon|^2]\leq Ce^{\beta_0(2)t}(1+|s|^2+m_2(\mu))$ and $m_2(\mu_t^\varepsilon)\leq Ce^{\beta_0(2)t}(1+m_2(\mu))$, the generator term satisfies
\begin{equation}\label{eq:LphiV-bound}
    e^{-\beta t}\mathbb{E}\bigl[|\mathcal{L}^\varphi V^\pi
    (s_t^\varepsilon,\mu_t^\varepsilon)|\bigr]
    \leq Ce^{-\beta t}\bigl(
    e^{\beta_0(2)t}(1+|s|^2+m_2(\mu))
    +e^{2\beta_0(2)t}(1+m_2(\mu))^2
    \bigr),
\end{equation}
which is integrable over $[0,\infty)$ since $\beta>2\beta_0(2)$. Similarly for $r_\lambda^\pi$. Letting $T\to\infty$ in \eqref{eq:ito-no-tau} by dominated convergence gives
\begin{equation}\label{eq:identity-final}
    V^\pi(s,\mu)
    = \int_0^\infty e^{-\beta t}
    \mathbb{E}^{s,\mu,\pi^\varepsilon}\!\bigl[
    r_\lambda^\pi(s_t^\varepsilon,\mu_t^\varepsilon)- \varepsilon
    \mathcal{L}^\varphi V^\pi(s_t^\varepsilon,\mu_t^\varepsilon)\bigr]\,\d t.
\end{equation}
From the definition of $B(\varepsilon;s)$ and the representations
\eqref{eq:identity-final} and \eqref{eq:rep-Vpi}, we obtain
\begin{equation}\label{eq:B-exact}
\begin{aligned}
    B(\varepsilon;s)
    &= \int_0^\infty e^{-\beta t}
    \frac{
    \mathbb{E}^{s,\mu,\pi^\varepsilon}\!\bigl[
    r_\lambda^\pi(s_t^\varepsilon,\mu_t^\varepsilon)\bigr]
    -\mathbb{E}^{s,\mu,\pi}\!\bigl[
    r_\lambda^\pi(s_t^\pi,\mu_t^\pi)\bigr]}
    {\varepsilon}\,\d t\\
    &= \int_0^\infty e^{-\beta t}
    \frac{
    \mathbb{E}^{s,\mu,\pi^\varepsilon}\!\bigl[
    r_\lambda^\pi(s_t^\varepsilon,\mu_t^\varepsilon)\bigr]
    -\mathbb{E}^{s,\mu,\pi^\varepsilon}\!\bigl[
    r_\lambda^\pi(s_t^\varepsilon,\mu_t^\varepsilon)
    -\varepsilon \mathcal{L}^\varphi V^\pi(s_t^\varepsilon,\mu_t^\varepsilon)
    \bigr]}
    {\varepsilon}\,\d t\\
    &= \int_0^\infty e^{-\beta t}
    \mathbb{E}^{s,\mu,\pi^\varepsilon}\!\bigl[
    \mathcal{L}^\varphi V^\pi(s_t^\varepsilon,\mu_t^\varepsilon)\bigr]\,\d t,
\end{aligned}
\end{equation}
where in the second equality we used \eqref{eq:identity-final}. This yields \eqref{eq:B-exact},
valid for all $\varepsilon\neq0$. For each fixed $t\geq0$, the same argument applied to $A_2$ above (fundamental theorem of calculus in state, Lions mean value formula in measure) applies here to $\mathcal{L}^\varphi V^\pi\in\mathcal{C}^{2,2}_{\mathrm{poly}}$, using the convergence $\mathbb{E}[|s_t^\varepsilon-s_t^\pi|^2] + \mathcal{W}_2^2(\mu_t^\varepsilon,\mu_t^\pi)\to0$ as $\varepsilon\to0$ from Lemma \ref{lem:stability-policy}, to give
\[
    \mathbb{E}^{s,\mu,\pi^\varepsilon}\!\bigl[
    \mathcal{L}^\varphi V^\pi(s_t^\varepsilon,\mu_t^\varepsilon)\bigr]
    \to
    \mathbb{E}^{s,\mu,\pi}\!\bigl[
    \mathcal{L}^\varphi V^\pi(s_t^\pi,\mu_t^\pi)\bigr]
    \qquad\text{as }\varepsilon\to0.
\]
Since the dominating function from \eqref{eq:LphiV-bound} is integrable over $[0,\infty)$, dominated convergence in \eqref{eq:B-exact} gives
\begin{equation}\label{eq:B-limit}
    \lim_{\varepsilon\to0} B(\varepsilon;s)
    = \int_0^\infty e^{-\beta t}
    \mathbb{E}^{s,\mu,\pi}\!\bigl[
    \mathcal{L}^\varphi V^\pi(s_t^\pi,\mu_t^\pi)\bigr]\,\d t.
\end{equation}

\medskip
\noindent\textbf{Step 3: We identify the limit and conclude.} Combining \eqref{eq:split}, \eqref{eq:A-limit}, and \eqref{eq:B-limit} gives
\begin{equation}\label{eq:pointwise-limit}
    \lim_{\varepsilon\to0}
    \frac{V^{\pi^\varepsilon}(s,\mu)-V^\pi(s,\mu)}{\varepsilon}
    = \int_0^\infty e^{-\beta t}
    \mathbb{E}^{s,\mu,\pi}\!\bigl[
    r_\lambda^\varphi(s_t^\pi,\mu_t^\pi)
    +(\mathcal{L}^\varphi V^\pi)(s_t^\pi,\mu_t^\pi)
    \bigr]\,\d t.
\end{equation}
Note that by definition of $\mathcal{L}^\varphi$ we deduce by Fubini's theorem (justified by polynomial growth and \Cref{A1}), that
\begin{align}\label{eq:charc_hamiltoniana_mixed}
    r_\lambda^\varphi(s,\mu)
    &+(\mathcal{L}^\varphi V^\pi)(s,\mu) \notag\\
    &= \int_\mathcal{A}\!\Bigl[r(s,\mu,a)-\lambda\log p_\pi(a|s,\mu)
    +b(s,\mu,a)\cdot\nabla_s V^\pi(s,\mu)
    +\tfrac{1}{2}\Sigma(s,\mu,a):D^2_{ss}V^\pi(s,\mu)\Bigr]
    \varphi(\d a|s,\mu) \notag\\
    &\quad +\int_{\mathbb{R}^d}\!\int_\mathcal{A}\!\Bigl[
    b(\xi,\mu,a)\cdot\partial_\mu V^\pi(s,\mu)(\xi)
    +\tfrac{1}{2}\Sigma(\xi,\mu,a):D_\xi\partial_\mu V^\pi(s,\mu)(\xi)
    \Bigr]\varphi(\d a|\xi,\mu)\,\mu(\d\xi). \nonumber\\
    &=\int_\mathcal{A}q^{\pi}_{\mathrm{rep}}(s,\mu,a)\,\varphi(\d a|s,\mu)+\int_{\mathbb{R}^d}\int_\mathcal{A}q^{\pi}_{\mathrm{pop}}(s,\mu,\xi,a)\,\varphi(\d a|\xi,\mu)\,\mu(\d\xi)
\end{align}
It remains to integrate over $\mu_0$. From \eqref{eq:A-dominated} integrated over $[0,\infty)$ and \eqref{eq:B-exact} bounded via \eqref{eq:LphiV-bound}, for all $|\varepsilon|\leq\varepsilon_0$:
\begin{align*}
    \left|\frac{V^{\pi^\varepsilon}(s,\mu)-V^\pi(s,\mu)}{\varepsilon}\right|
    &\leq |A(\varepsilon;s)|+|B(\varepsilon;s)| \notag\\
    &\leq \frac{M_\psi C_r(1+|s|^2+m_2(\mu))}{\beta-\beta_0(2)}
    + C\left[\frac{1+|s|^2+m_2(\mu)}{\beta-\beta_0(2)}
    +\frac{(1+m_2(\mu))^2}{\beta-2\beta_0(2)}\right] \notag\\
    &\leq K_\mu(1+|s|^2),
\end{align*}
where $K_\mu > 0$ depends on the fixed measure $\mu$ and the structural constants but not on $s$ or $\varepsilon$. The factor $(1+m_2(\mu))^2$ is absorbed into $K_\mu$ because $m_2(\mu_t^\varepsilon)$ is deterministic and was already used to bound the expectation; the remaining $s$-dependence is thus at most quadratic. Since $\mu_0\in\mathcal{P}_2(\mathbb{R}^d)$, we have $\int_{\mathbb{R}^d}K_\mu(1+|s|^2)\,\mu_0(ds) = K_\mu(1+m_2(\mu_0)) < \infty$. Dominated convergence over $\mu_0(ds)$ then gives
\[
    \frac{d}{d\varepsilon}J(\varepsilon)\bigg|_{\varepsilon=0}
    = \frac{1}{\beta}\left(\beta\int_{\mathbb{R}^d}
    \lim_{\varepsilon\to0}
    \frac{V^{\pi^\varepsilon}(s,\mu)-V^\pi(s,\mu)}{\varepsilon}
    \,\mu_0(\d s)\right),
\]
and substituting the pointwise limit \eqref{eq:pointwise-limit}, and \eqref{eq:charc_hamiltoniana_mixed}, and using \Cref{def:discounted_measure}, gives \eqref{eq:gateaux-pg}.
\end{proof}

We now provide the proof \Cref{thm:parametric-policy-gradient}. 

\begin{proof}[\textbf{Proof of Theorem \ref{thm:parametric-policy-gradient}}]

Fix $v\in\mathbb{R}^p$. We show that
$\frac{d}{d\varepsilon}J(\omega+\varepsilon v)|_{\varepsilon=0}$ equals
the right-hand side of \eqref{eq:parametric-pg} dotted with $v$; since
$v$ is arbitrary, this identifies $\nabla_\omega J(\omega)$. 

\medskip
\noindent\textbf{Directional kernel.} Define the signed kernel
\begin{equation*}
    \varphi_v(\d a\mid s,\mu)
    :=\bigl(\nabla_\omega\log p_\pi(a\mid s,\mu)\cdot v\bigr)\,
    \pi(\d a\mid s,\mu).
\end{equation*}
The zero-mass condition $\int_\mathcal{A}\varphi_v(da\mid s,\mu)=0$
holds since $\int_\mathcal{A}\nabla_\omega\log p_{\pi_\omega}(a\mid s,\mu)
\pi_\omega(da\mid s,\mu)=\nabla_\omega\int_\mathcal{A}p_{\pi_\omega}\nu(da)=0$,
which follows from \Cref{A2} and differentiation under the integral sign.
We write $(s_t^{\varepsilon v},\mu_t^{\varepsilon v})_{t\ge0}$ for the
dynamics under $\pi_{\omega+\varepsilon v}$ starting from $(s,\mu)$, and
$(s_t^\pi,\mu_t^\pi)_{t\ge0}$ for those under $\pi_\omega$. By \Cref{A3},
Lemma \ref{lem:stability-policy} applies and gives, for every $t\ge0$,
\begin{equation}\label{eq:param-stability}
    \mathbb{E}\bigl[|s_t^{\varepsilon v}-s_t^\pi|^2\bigr]
    +\mathcal{W}_2^2(\mu_t^{\varepsilon v},\mu_t^\pi)
    \leq C\varepsilon^2 e^{\beta_{\rm stab}\,t}
    \bigl(1+|s|^2+m_2(\mu)\bigr),
\end{equation}
so both terms tend to zero as $\varepsilon\to0$ for each fixed $t\ge0$.
For each fixed $s\in\mathbb{R}^d$, we split the difference quotient as
\begin{equation}\label{eq:param-split}
    \frac{V^{\pi_{\omega+\varepsilon v}}(s,\mu)-V^{\pi}(s,\mu)}{\varepsilon}
    = A(\varepsilon;s) + B(\varepsilon;s),
\end{equation}
where
\begin{align*}
    A(\varepsilon;s)
    &:= \int_0^\infty e^{-\beta t}
    \frac{\mathbb{E}^{s,\mu,\pi_{\omega+\varepsilon v}}\!
    \bigl[r_\lambda^{\pi_{\omega+\varepsilon v}}(s_t^{\varepsilon v},
    \mu_t^{\varepsilon v})\bigr]
    -\mathbb{E}^{s,\mu,\pi_{\omega+\varepsilon v}}\!
    \bigl[r_\lambda^{\pi}(s_t^{\varepsilon v},\mu_t^{\varepsilon v})\bigr]}
    {\varepsilon}\,\d t,\\
    B(\varepsilon;s)
    &:= \int_0^\infty e^{-\beta t}
    \frac{\mathbb{E}^{s,\mu,\pi_{\omega+\varepsilon v}}\!
    \bigl[r_\lambda^{\pi}(s_t^{\varepsilon v},\mu_t^{\varepsilon v})\bigr]
    -\mathbb{E}^{s,\mu,\pi}\!
    \bigl[r_\lambda^{\pi}(s_t^{\pi},\mu_t^{\pi})\bigr]}
    {\varepsilon}\,\d t.
    \end{align*}

\medskip
\noindent\textbf{Step 1: We compute the limit of $A(\varepsilon;s)$.} By \Cref{A2}, the map $\varepsilon\mapsto r_\lambda^{\pi_{\omega+\varepsilon v}}(x,\nu)$
is differentiable at $\varepsilon=0$ with derivative $\nabla_\omega
r_\lambda^{\pi_\omega}(x,\nu)\cdot v$. By the zero-mass property
of $\varphi_v$, this derivative equals $r_\lambda^{\varphi_v}(x,\nu)$
as defined in \eqref{eq:r-lambda-deriv} (with $\varphi_v$ in place of $\varphi$).
By the mean value theorem and the bound \eqref{eq:A2-Sigma_r} on $\nabla_\omega
r_\lambda^{\pi_\omega}$, uniformly in $\omega$ on compact sets:
\begin{equation}\label{eq:ratio-r-bound-param}
    \left|\frac{r_\lambda^{\pi_{\omega+\varepsilon v}}(x,\nu)
    -r_\lambda^\pi(x,\nu)}{\varepsilon}\right|
    \leq C_K\|v\|\bigl(1+|x|^2+m_2(\nu)\bigr),
    \qquad |\varepsilon|\leq\varepsilon_0.
\end{equation}
The rest of the argument for $A$ follows the same $A_1+A_2$ decomposition
as in Step 1 of Theorem \ref{thm:policy-gradient-theorem}, using
\eqref{eq:ratio-r-bound-param} in place of the previous uniform bound
and \eqref{eq:param-stability} for the convergence of the dynamics.
The dominating function is $C_K\|v\|e^{-(\beta-\beta_0(2))t}
(1+|s|^2+m_2(\mu))$, integrable since $\beta>\beta_0(2)$, giving
\begin{equation}\label{eq:A-limit-param}
    \lim_{\varepsilon\to0}A(\varepsilon;s)
    =\int_0^\infty e^{-\beta t}
    \mathbb{E}^{s,\mu,\pi}\!\bigl[
    r_\lambda^{\varphi_v}(s_t^\pi,\mu_t^\pi)\bigr]\,\d t.
\end{equation}

\medskip
\noindent\textbf{Step 2: We derive a formula for $B(\varepsilon;s)$
  and compute its limit.}

We apply the It\^o-Lions formula to $V^\pi$ along the
$\pi_{\omega+\varepsilon v}$-dynamics. For $n\ge1$, let
$\tau_n^{\varepsilon v}:=\inf\{t\ge0:|s_t^{\varepsilon v}|\ge n\}$.
Applying the formula to $t\mapsto e^{-\beta(t\wedge\tau_n^{\varepsilon v})}
V^\pi(s_{t\wedge\tau_n^{\varepsilon v}}^{\varepsilon v},
\mu_{t\wedge\tau_n^{\varepsilon v}}^{\varepsilon v})$, taking expectations,
decomposing $\mathcal{L}^{\pi_{\omega+\varepsilon v}}
=\mathcal{L}^\pi+(\mathcal{L}^{\pi_{\omega+\varepsilon v}}-\mathcal{L}^\pi)$,
and substituting the HJB equation $(\mathcal{L}^\pi-\beta)V^\pi=-r_\lambda^\pi$
from \eqref{eq:HJB-eval} gives
\begin{equation}\label{eq:ito-param}
    \mathbb{E}\bigl[e^{-\beta(T\wedge\tau_n^{\varepsilon v})}
    V^\pi(s_{T\wedge\tau_n^{\varepsilon v}}^{\varepsilon v},
    \mu_{T\wedge\tau_n^{\varepsilon v}}^{\varepsilon v})\bigr]
    -V^\pi(s,\mu)
    = \mathbb{E}\!\left[\int_0^{T\wedge\tau_n^{\varepsilon v}}
    e^{-\beta t}\bigl(-r_\lambda^\pi
    +(\mathcal{L}^{\pi_{\omega+\varepsilon v}}-\mathcal{L}^{\pi})V^\pi\bigr)
    (s_t^{\varepsilon v},\mu_t^{\varepsilon v})\,\d t\right].
\end{equation}
We now bound the generator difference term. By \Cref{A2} and the mean
value theorem, the ratios $\frac{b^{\pi_{\omega+\varepsilon v}}-b^\pi}{\varepsilon}$
and $\frac{\Sigma^{\pi_{\omega+\varepsilon v}}-\Sigma^\pi}{\varepsilon}$
are bounded by $C_K\|v\|(1+|x|+\sqrt{m_2(\nu)})$ and
$C_K\|v\|(1+|x|^2+m_2(\nu))$ respectively, uniformly in
$|\varepsilon|\leq\varepsilon_0$. Using the polynomial bounds on the
derivatives of $V^\pi$ from Definition \ref{def:C22poly} and the
growth of $b^\pi$, $\Sigma^\pi$ from \Cref{A1,A3}, each of the four
terms in $(\mathcal{L}^{\pi_{\omega+\varepsilon v}}-\mathcal{L}^\pi)V^\pi$
is bounded in absolute value. The dominant contribution comes from the
term involving $\frac{\Sigma^{\pi_{\omega+\varepsilon v}}-\Sigma^\pi}{\varepsilon}
:D^2_{ss}V^\pi$, which satisfies
\begin{equation}\label{eq:generator-diff-bound-param}
    \left|\frac{(\mathcal{L}^{\pi_{\omega+\varepsilon v}}-\mathcal{L}^{\pi})
    V^\pi(x,\nu)}{\varepsilon}\right|
    \leq C_K\|v\|\bigl(1+|x|^2+m_2(\nu)^2\bigr),
    \qquad |\varepsilon|\leq\varepsilon_0.
\end{equation}
The passages $n\to\infty$ and then $T\to\infty$ in \eqref{eq:ito-param},
with the subsequent recovery of \eqref{eq:rep-Vpi}, follow by dominated
convergence using the same arguments as in Step 2 of
Theorem \ref{thm:policy-gradient-theorem} (the growth bounds are the same
with $C_K\|v\|$ in place of $M_\psi$, and integrability holds since
$\beta>2\beta_0(2)$). Subtracting the stochastic
representation \eqref{eq:rep-Vpi} and dividing by $\varepsilon\neq0$:
\begin{equation}\label{eq:B-formula-param}
    B(\varepsilon;s)
    = \int_0^\infty e^{-\beta t}
    \frac{1}{\varepsilon}
    \mathbb{E}^{s,\mu,\pi_{\omega+\varepsilon v}}\!\bigl[
    (\mathcal{L}^{\pi_{\omega+\varepsilon v}}-\mathcal{L}^{\pi})
    V^\pi(s_t^{\varepsilon v},\mu_t^{\varepsilon v})\bigr]\,\d t.
\end{equation}
We now pass $\varepsilon\to0$ in \eqref{eq:B-formula-param}. By \Cref{A2},
the ratios $\frac{b^{\pi_{\omega+\varepsilon v}}-b^\pi}{\varepsilon}\to
\nabla_\omega b^{\pi_\omega}\cdot v = b^{\varphi_v}$ and
$\frac{\Sigma^{\pi_{\omega+\varepsilon v}}-\Sigma^\pi}{\varepsilon}\to
\nabla_\omega\Sigma^{\pi_\omega}\cdot v = \Sigma^{\varphi_v}$ pointwise
as $\varepsilon\to0$. Applying these limits to each
of the four terms of the generator, using the polynomial growth of
$V^\pi$ and dominated convergence over $\xi$ in the integral terms
(justified by the $\mathcal{P}_2$ condition on $\mu$), gives
\begin{equation}\label{eq:generator-ratio-limit}
    \frac{(\mathcal{L}^{\pi_{\omega+\varepsilon v}}-\mathcal{L}^{\pi})
    V^\pi(x,\nu)}{\varepsilon}
    \xrightarrow[\varepsilon\to0]{}
    (\mathcal{L}^{\varphi_v}V^\pi)(x,\nu)
    \quad\text{for every }(x,\nu).
\end{equation}
Combining \eqref{eq:generator-ratio-limit} with \eqref{eq:param-stability}
and the same continuity argument applied to the polynomial-growth function
$\mathcal{L}^{\varphi_v}V^\pi$, the integrand
in \eqref{eq:B-formula-param} converges for each fixed $t\ge0$. The
bound \eqref{eq:generator-diff-bound-param} provides a dominating
function integrable over $[0,\infty)$ since $\beta>2\beta_0(2)$, so
dominated convergence gives
\begin{equation}\label{eq:B-limit-param}
    \lim_{\varepsilon\to0}B(\varepsilon;s)
    =\int_0^\infty e^{-\beta t}
    \mathbb{E}^{s,\mu,\pi}\!\bigl[
    \mathcal{L}^{\varphi_v}V^\pi(s_t^\pi,\mu_t^\pi)\bigr]\,\d t.
\end{equation}

\medskip
\noindent\textbf{Step 3: We identify the limit and integrate over $\mu_0$.}

Combining \eqref{eq:param-split}, \eqref{eq:A-limit-param},
and \eqref{eq:B-limit-param}:
\begin{equation}\label{eq:pointwise-param}
    \lim_{\varepsilon\to0}
    \frac{V^{\pi_{\omega+\varepsilon v}}(s,\mu)-V^\pi(s,\mu)}{\varepsilon}
    =\int_0^\infty e^{-\beta t}
    \mathbb{E}^{s,\mu,\pi}\!\bigl[
    r_\lambda^{\varphi_v}(s_t^\pi,\mu_t^\pi)
    +(\mathcal{L}^{\varphi_v}V^\pi)(s_t^\pi,\mu_t^\pi)
    \bigr]\,\d t.
\end{equation}
We expand $r_\lambda^{\varphi_v}+\mathcal{L}^{\varphi_v}V^\pi$ using
the definitions of $r_\lambda^{\varphi_v}$, $\mathcal{L}^{\varphi_v}$,
and $\varphi_v$, and Fubini's theorem (justified by \Cref{A1,A2} and
the polynomial growth of $V^\pi$), we deduce \eqref{eq:charc_hamiltoniana_mixed} with $\varphi$ replaced by $\varphi_v$.  Substituting $\varphi_v(da|x,\nu)=(\nabla_\omega\log p_\pi(a|x,\nu)\cdot v)
\,\pi(da|x,\nu)$ gives
\[
    r_\lambda^{\varphi_v}(s,\mu)+(\mathcal{L}^{\varphi_v}V^\pi)(s,\mu)
    = v\cdot\int_\mathcal{A}
    q^{\pi}_{\mathrm{rep}}\,
    \nabla_\omega\log p_\pi\,\pi(\d a)
    +v\cdot\int_{\mathbb{R}^d}\!\int_\mathcal{A}
    q^{\pi}_{\mathrm{pop}}\,
    \nabla_\omega\log p_\pi\,\pi(\d a)\,\mu(\d\xi),
\]
so \eqref{eq:pointwise-param} gives
$\frac{d}{d\varepsilon}V^{\pi_{\omega+\varepsilon v}}(s,\mu)|_{\varepsilon=0}
= v\cdot R(s)$, where $R(s)$ denotes the right-hand side
of \eqref{eq:parametric-pg} at $s$.

It remains to integrate over $\mu_0$. From \eqref{eq:ratio-r-bound-param}
and \eqref{eq:generator-diff-bound-param}, the difference quotient satisfies
\[
    \left|\frac{V^{\pi_{\omega+\varepsilon v}}(s,\mu)-V^\pi(s,\mu)}{\varepsilon}\right|
    \leq K_\mu\bigl(1+|s|^2\bigr),
    \qquad |\varepsilon|\leq\varepsilon_0,
\]
where $K_\mu>0$ absorbs the deterministic factors involving $m_2(\mu)$
but does not depend on $s$ or $\varepsilon$. Since
$\mu_0\in\mathcal{P}_2(\mathbb{R}^d)$, dominated convergence over
$\mu_0(\d s)$ gives $\frac{\d}{\d\varepsilon}J(\omega+\varepsilon v)
|_{\varepsilon=0}=v\cdot\int_{\mathbb{R}^d}R(s)\,\mu_0(\d s)$.
Since $v\in\mathbb{R}^p$ is arbitrary, $J$ is differentiable at $\omega$,
we deduce that $\nabla_\omega J(\omega)=\int_{\mathbb{R}^d}R(s)\,\mu_0(\d s)$,
which is \eqref{eq:parametric-pg}.
\end{proof}

\subsection{Proofs for \Cref{sec:LQR}}
\label{APP:PROOFS_SEC_LQR}

\begin{lemma}[Stationary Riccati system: existence, uniqueness, and stabilizing solution]
\label{lem:ARE_full_stationary}
Assume \Cref{ass:LQR}. Then there exists a unique quadruple $(K,\Lambda,Y,R)\in\mathbb S_{++}^d\times\mathbb S_{++}^d\times\mathbb R^d\times\mathbb R$
solving \eqref{eq:ARE_P}-\eqref{eq:ARE_r}. Moreover, $S=N+F^\top K F\in\mathbb S_{++}^m,$ and the matrices
\begin{equation}\label{eq:Ac_definition_new}
A_\beta-BS^{-1}U
\qquad\text{and}\qquad
\tilde A_\beta-BS^{-1}W
\end{equation}
are Hurwitz. Furthermore, among all solutions of
\eqref{eq:ARE_P}-\eqref{eq:ARE_r}, this is the unique one for which the matrices in
\eqref{eq:Ac_definition_new} are Hurwitz.
\end{lemma}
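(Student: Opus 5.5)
The system is triangular, so I will solve it in order: first \eqref{eq:ARE_P} for $K$, then \eqref{eq:ARE_Pi} for $\Lambda$ (where $K$ enters only as data), then the linear equation \eqref{eq:ARE_Y} for $Y$, and finally \eqref{eq:ARE_r}, which defines $R$ explicitly once $K,Y$ are known and $\det S>0$.

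\textbf{Step 1 (equation for $K$).} Using $\beta K=KA+A^\top K-\beta K$ shifted, I rewrite \eqref{eq:ARE_P} as the undiscounted stochastic algebraic Riccati equation
\[
0=Q+KA_\beta+A_\beta^\top K+D^\top KD-(I+B^\top K+F^\top KD)^\top(N+F^\top KF)^{-1}(I+B^\top K+F^\top KD).
\]
This is the Riccati equation of an infinite-horizon stochastic LQ problem with state-control cross term $I$, drift $(A_\beta,B)$ and multiplicative noise $(D,F)$. Completing the square, the effective state weight is $Q-I^\top N^{-1}I\succ0$ by (H1), and mean-square stabilizability of $(A_\beta,B;D,F)$ is exactly the Lyapunov inequality in (H2). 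I will then use the standard theory of stochastic AREs (Rami--Zhou type results, or a direct Kleinman/Newton policy iteration). Start from $\Theta_0:=\Theta$ of (H2). Iteratively solve the Lyapunov equations
\[
(A_\beta+B\Theta_k)^\top K_k+K_k(A_\beta+B\Theta_k)+(D+F\Theta_k)^\top K_k(D+F\Theta_k)+Q+2\,\mathrm{sym}(\Theta_k^\top I)+\Theta_k^\top N\Theta_k=0,
\]
and update $\Theta_{k+1}=-(N+F^\top K_kF)^{-1}(I+B^\top K_k+F^\top K_kD)$. Completing squares shows $K_k$ is monotonically decreasing and bounded below by $0$, because the cost is strictly positive by (H1). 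It also shows that each $\Theta_{k+1}$ remains mean-square stabilizing. The limit $K\succ0$ solves the equation, so $S=N+F^\top KF\succ0$. Mean-square stability of $A_\beta-BS^{-1}U$ with noise $D-FS^{-1}U$ implies in particular that $A_\beta-BS^{-1}U$ is Hurwitz: apply the Lyapunov function $K$ and drop the nonnegative noise term.

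\textbf{Step 2 (equation for $\Lambda$).} With $K$ fixed, \eqref{eq:ARE_Pi} is a deterministic ARE in $\Lambda$ for the pair $(\tilde A_\beta,B)$. Its constant term is $Q+\bar Q+(D+\bar D)^\top K(D+\bar D)$, its control weight is $S$, and its cross term is $I+F^\top K(D+\bar D)$. After completing the square, the effective state weight is at least as large as $Q+\bar Q-I^\top N^{-1}I\succ0$; I expect this to follow from the Schur-complement argument, since adding the PSD block $\begin{pmatrix}(D+\bar D)^\top\\ F^\top\end{pmatrix}K\begin{pmatrix}D+\bar D & F\end{pmatrix}$ to $\begin{pmatrix}Q+\bar Q & I^\top\\ I& N\end{pmatrix}$ preserves positive definiteness of the Schur complement. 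Combined with stabilizability of $(\tilde A_\beta,B)$ from (H2) and the resulting detectability, classical deterministic ARE theory yields a unique stabilizing solution $\Lambda\succ0$ with $\tilde A_\beta-BS^{-1}W$ Hurwitz.

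\textbf{Steps 3--4 and uniqueness.} Equation \eqref{eq:ARE_Y} reads $(A_\beta-BS^{-1}U)^\top Y=-(M+D^\top K\gamma-U^\top S^{-1}(H+F^\top K\gamma))$ after moving the $Y$-terms coming from $O$ to the left. Its matrix is Hurwitz, hence invertible, so $Y$ is unique; $R$ then follows from \eqref{eq:ARE_r}.

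For uniqueness among stabilizing solutions, I will take two stabilizing solutions $K_1,K_2$ and subtract the equations. This gives $\Delta=K_1-K_2$ satisfying a Lyapunov-type equation driven by the closed loop of $K_1$, with a sign-definite remainder $\pm(\Theta_1-\Theta_2)^\top S(\Theta_1-\Theta_2)$. Symmetrizing with each closed loop gives $\Delta\succeq0$ and $\Delta\preceq0$. The same argument applies to $\Lambda$, and $Y,R$ are then determined. Uniqueness in $\mathbb S_{++}$ without the stabilizing requirement follows because, under the detectability implied by (H1), every PSD solution is stabilizing.

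\textbf{Main obstacle.} I expect the main obstacle to be Step 1: proving that the stabilizing property persists along the Newton iteration in the presence of multiplicative noise with cross terms. I would first try the generalized Lyapunov/Kleinman argument, with the discount absorbed via $A_\beta$, and fall back on citing Rami--Zhou's SDP characterization if needed.
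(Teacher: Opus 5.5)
\textbf{Comparison with the paper.} Your route differs from the paper's. The paper puts \eqref{eq:ARE_P}--\eqref{eq:ARE_Pi} in shifted form and gets $(K,\Lambda)$, $S\succ0$ and both Hurwitz properties in one stroke from \cite[Theorem 3.4.1]{sun2020stochastic}. Only the linear solves for $Y$ and $R$ are done by hand.

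You exploit the triangular structure instead. A Kleinman iteration started at the mean-square stabilizer of (H2) yields $K$. With $K$ frozen, \eqref{eq:ARE_Pi} is a deterministic ARE for $(\tilde A_\beta,B)$ with weight $S$ and cross term $I+F^\top K(D+\bar D)$. Its effective weight dominates $Q+\bar Q-I^\top N^{-1}I\succ0$, because a Schur complement is the minimum over the control variable of the block quadratic form and is therefore monotone. Your claim there is correct.

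Your version is self-contained and shows which hypothesis is used where: mean-square stabilizability only for $K$, plain stabilizability of $(\tilde A_\beta,B)$ only for $\Lambda$. It also gives mean-square stability of the $K$-loop, which is more than Hurwitz. The paper's version is shorter but leaves the matching of (H1)--(H2) with the cited hypotheses implicit.

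The obstacle you flag in Step 1 is not a real one. Write $\mathcal L_\Theta(P):=(A_\beta+B\Theta)^\top P+P(A_\beta+B\Theta)+(D+F\Theta)^\top P(D+F\Theta)$ and $\mathcal Q_\Theta:=Q+\Theta^\top I+I^\top\Theta+\Theta^\top N\Theta\succeq Q-I^\top N^{-1}I\succ0$. Completing the square with $S_k=N+F^\top K_kF\succ0$ gives $\mathcal L_{\Theta_{k+1}}(K_k)=-\mathcal Q_{\Theta_{k+1}}-(\Theta_k-\Theta_{k+1})^\top S_k(\Theta_k-\Theta_{k+1})\prec0$ with $K_k\succ0$. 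That is the generalized Lyapunov criterion for mean-square stability.

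There is one minor slip. The coefficient of $Y$ in \eqref{eq:ARE_Y} is $(A-\beta I_d-BS^{-1}U)^\top=(A_\beta-BS^{-1}U-\tfrac{\beta}{2}I_d)^\top$, not $(A_\beta-BS^{-1}U)^\top$. It is still Hurwitz, so nothing changes.

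\textbf{The final uniqueness claim.} Your symmetrization does not prove the last sentence of the statement, which only assumes the matrices in \eqref{eq:Ac_definition_new} are Hurwitz. For two solutions, set $\Theta_i:=-S_i^{-1}U_i$ and $\Delta:=K_1-K_2$. Then $\mathcal L_{\Theta_1}(\Delta)=-(\Theta_1-\Theta_2)^\top S_2(\Theta_1-\Theta_2)$ and $\mathcal L_{\Theta_2}(\Delta)=(\Theta_1-\Theta_2)^\top S_1(\Theta_1-\Theta_2)$. Reading off the signs of $\Delta$ requires $-\mathcal L_{\Theta_i}^{-1}$ to preserve positive semidefiniteness. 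That means mean-square stability of $(A_\beta+B\Theta_i,\,D+F\Theta_i)$ for both $i$, which also forces $K_i\succ0$ and $S_i\succ0$. When $D\neq0$, Hurwitz-ness of $A_\beta+B\Theta_2$ does not give this, and the clause is in fact false as stated.

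Here is a counterexample. Take $d=m=1$, $\beta=1$, $A=-\tfrac72$, $\bar A=0$, $B=N=Q=1$, $\bar Q=0$, $I=F=0$, $D=-\bar D=\sqrt{10}$. Then (H1)--(H2) hold (e.g.\ $\Theta=-2$, $P_0=1$).
\begin{itemize}
\item \eqref{eq:ARE_P} reads $K^2-2K-1=0$, with roots $K_\pm=1\pm\sqrt2$, and $A_\beta-BS^{-1}U=-4-K_\pm<0$ for both roots.
\item Since $F=0$ and $D+\bar D=0$, \eqref{eq:ARE_Pi} reads $\Lambda^2+8\Lambda-1=0$ for either $K$. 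The root $\Lambda=\sqrt{17}-4$ gives $\tilde A_\beta-BS^{-1}W=-\sqrt{17}$.
\item $Y$ and $R$ are uniquely solvable for each root.
\end{itemize}
Hence $(K_-,\sqrt{17}-4,Y_-,R_-)$ is a second solution with both matrices Hurwitz. Its $K$-loop is not mean-square stable: $2(A_\beta-K_-)+D^2=2\sqrt2>0$.

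What your argument does prove, and what the final clause should state, is uniqueness among solutions whose $K$-loop $(A_\beta-BS^{-1}U,\,D-FS^{-1}U)$ is mean-square stable. This is the standard notion of stabilizing solution for stochastic AREs, and exactly what your Kleinman limit delivers. It is also equivalent to $K\succeq0$, by your correct remark that every positive semidefinite solution is mean-square stabilizing. For the $\Lambda$-loop, Hurwitz is enough: with $K$ fixed that equation is deterministic, and the Sylvester-type subtraction argument applies.
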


\begin{proof}
We first rewrite \eqref{eq:ARE_P}-\eqref{eq:ARE_Pi} in shifted form:
\begin{align}
0
&=
Q+K A_\beta+A_\beta^\top K+D^\top K D-U^\top S^{-1}U,
\label{eq:ARE_shifted_K_lemma}\\
0
&=
Q+\bar Q+\Lambda \tilde A_\beta+\tilde A_\beta^\top \Lambda
+(D+\bar D)^\top K(D+\bar D)-W^\top S^{-1}W.
\label{eq:ARE_shifted_Lambda_lemma}
\end{align}
Then, a direct application of \cite[Theorem 3.4.1]{sun2020stochastic} yields a unique pair $(K,\Lambda)\in\mathbb S_{++}^d\times\mathbb S_{++}^d$ solving \eqref{eq:ARE_shifted_K_lemma}-\eqref{eq:ARE_shifted_Lambda_lemma}, such that $S=N+F^\top K F\in\mathbb S_{++}^m$ and the matrices $A_\beta-BS^{-1}U,$ and $
\tilde A_\beta-BS^{-1}W$ are Hurwitz. Once $(K,\Lambda)$ is fixed, equation \eqref{eq:ARE_Y} is linear in $Y$. Since $O=H+B^\top Y+F^\top K\gamma,$ we rewrite \eqref{eq:ARE_Y} as
\[
\Bigl((A-BS^{-1}U)^\top-\beta I_d\Bigr)Y
+
M
+
D^\top K\gamma
-
U^\top S^{-1}(H+F^\top K\gamma)
=
0.
\]
Because $A_\beta-BS^{-1}U$ is Hurwitz, the matrix
\[
(A-BS^{-1}U)-\beta I_d
=
\bigl(A_\beta-BS^{-1}U\bigr)-\frac{\beta}{2}I_d
\]
is also Hurwitz, hence invertible. Therefore the above linear equation admits a unique solution
$Y\in\mathbb R^d$. Finally, once $(K,\Lambda,Y)$ is fixed, equation \eqref{eq:ARE_r} is linear in $R$, and since
$\beta>0$ it admits the unique solution.
\end{proof}

\begin{lemma}[Discounted second-moment bound and transversality under $\pi^*$]
\label{lem:transversality_pi_star}
Assume \Cref{ass:LQR}, and let $\pi^*$ be given by \eqref{eq:pi_star}. Let
$(s_t,\tilde s_t,\mu_t)$ be the corresponding solution of
\eqref{eq:rep_SDE}-\eqref{eq:MKV_SDE} under $\pi^*$, with
$s_0=s$, $\tilde s_0\sim\mu$, and $\mu_t=\Law(\tilde s_t)$. Set $\xi_t:=s_t-\bar\mu_t,$ and $\tilde\xi_t:=\tilde s_t-\bar\mu_t.$ Then there exist constants $C\ge1$ and $\kappa>0$, independent of $t$, such that for all $t\ge0$,
\begin{equation}\label{eq:discounted_second_moment_bound}
e^{-\beta t}\Bigl(\mathbb E|\xi_t|^2+\mathbb E|\tilde\xi_t|^2+|\bar\mu_t|^2\Bigr)
\le
C\,e^{-2\kappa t}\Bigl(1+|s|^2+m_2(\mu)+\lambda\Bigr).
\end{equation}
In particular, $\lim_{t\to\infty}e^{-\beta t}\bigl(\mathbb E|s_t|^2+m_2(\mu_t)\bigr) = 0$ and therefore, the function $V$ from \eqref{eq:V_candidate_pf2} satisfies that
\begin{align}\label{eq:transitivity_conditon_V_pi_star}
    \lim_{t\to\infty}e^{-\beta t}\,\mathbb E|V(s_t,\mu_t)| = 0
\end{align}
\end{lemma}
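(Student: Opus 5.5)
The plan is to exploit the linear Gaussian closed-loop structure under $\pi^*$ and decompose the dynamics into a mean part and a fluctuation part, each governed by one of the two Hurwitz matrices from Lemma \ref{lem:ARE_full_stationary}.

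\textbf{Step 1: closed-loop coefficients.} Under $\pi^*$ the aggregated drift is
\[
b^{\pi^*}(x,\mu)=(A-BS^{-1}U)x+\bigl(\bar A-BS^{-1}(W-U)\bigr)\bar\mu-BS^{-1}O.
\]
The aggregated covariance is $\Sigma^{\pi^*}(x,\mu)=vv^\top+\tfrac{\lambda}{2}FS^{-1}F^\top$, where
\[
v=\gamma+(D-FS^{-1}U)x+\bigl(\bar D-FS^{-1}(W-U)\bigr)\bar\mu-FS^{-1}O.
\]

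\textbf{Step 2: the mean.} Taking expectations in \eqref{eq:MKV_SDE}, the mean satisfies the deterministic linear ODE
\[
\dot{\bar\mu}_t=(A+\bar A-BS^{-1}W)\bar\mu_t+c,
\]
with $c$ constant. Since $\tilde A_\beta-BS^{-1}W$ is Hurwitz, $e^{-\beta t/2}\bar\mu_t$ decays exponentially up to the constant forcing. The forcing is harmless because $e^{-\beta t/2}$ times a bounded quantity also decays. This gives $e^{-\beta t}|\bar\mu_t|^2\le Ce^{-2\kappa t}(1+m_2(\mu))$.

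\textbf{Step 3: the fluctuations.} The process $\xi_t=s_t-\bar\mu_t$ satisfies
\[
\d\xi_t=(A-BS^{-1}U)\xi_t\,\d t+(\text{terms affine in }\bar\mu_t)\,\d t+\text{noise}.
\]
The noise has quadratic variation
\[
\bigl((D-FS^{-1}U)\xi_t+\text{affine in }\bar\mu_t\bigr)^{\otimes2}+\tfrac{\lambda}{2}FS^{-1}F^\top,
\]
and the same holds for $\tilde\xi_t$.

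\textbf{Step 4: Lyapunov function (main obstacle).} Here I need a Lyapunov matrix for the \emph{stochastic} discounted operator, not just Hurwitz drift. The multiplicative noise term $(D-FS^{-1}U)\xi$ must be controlled, which requires mean-square stability of $(A_\beta-BS^{-1}U,\,D-FS^{-1}U)$.

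I would obtain it from the Riccati equation itself. Rewriting \eqref{eq:ARE_shifted_K_lemma} in closed-loop form with $\Theta^*=-S^{-1}U$ gives
\[
(A_\beta+B\Theta^*)^\top K+K(A_\beta+B\Theta^*)+(D+F\Theta^*)^\top K(D+F\Theta^*)=-(Q+I^\top\Theta^*+\Theta^{*\top}I+\Theta^{*\top}N\Theta^*).
\]
The right-hand side is $\preceq-(Q-I^\top N^{-1}I)\prec0$ by (H1), after completing the square. So $K\succ0$ serves as a Lyapunov matrix for $e^{-\beta t}\mathbb E[\xi_t^\top K\xi_t]$.

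Applying Itô's formula to $e^{-\beta t}\xi_t^\top K\xi_t$ and using Young's inequality on the cross terms with $\bar\mu_t$ and the constants gives
\[
\frac{\d}{\d t}\Bigl(e^{-\beta t}\mathbb E[\xi_t^\top K\xi_t]\Bigr)\le-2\kappa'\,e^{-\beta t}\mathbb E[\xi_t^\top K\xi_t]+Ce^{-\beta t}\bigl(|\bar\mu_t|^2+1+\lambda\bigr).
\]
The $\lambda$ contribution comes from the $\tfrac{\lambda}{2}FS^{-1}F^\top:K$ term. Combined with Step 2, Grönwall yields the stated bound for $\mathbb E|\xi_t|^2$, and identically for $\tilde\xi_t$ since $\tilde\xi_0$ is centered with $\mathbb E|\tilde\xi_0|^2\le m_2(\mu)$. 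Shrinking $\kappa$ as needed absorbs the constant-forcing terms, which only produce $e^{-\beta t}$-type decay.

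\textbf{Step 5: conclusion.} Use
\[
\mathbb E|s_t|^2\le2\mathbb E|\xi_t|^2+2|\bar\mu_t|^2,\qquad m_2(\mu_t)=\mathbb E|\tilde\xi_t|^2+|\bar\mu_t|^2,
\]
which gives the transversality limit. Since $V$ in \eqref{eq:V_candidate_pf2} is quadratic, $|V(x,\nu)|\le C(1+|x|^2+|\bar\nu|^2)$. Therefore \eqref{eq:transitivity_conditon_V_pi_star} follows from \eqref{eq:discounted_second_moment_bound}.
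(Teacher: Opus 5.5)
Your proposal is correct and follows essentially the same route as the paper. The mean solves a linear ODE governed by the Hurwitz matrix $\tilde A_\beta-BS^{-1}W$. The matrix $K$ is a mean-square Lyapunov matrix, via the closed-loop form of the shifted Riccati equation with $Q+\Theta^{*\top}N\Theta^*+\Theta^{*\top}I+I^\top\Theta^*\succeq Q-I^\top N^{-1}I\succ0$. It\^o, Young and Gr\"onwall then give the bound, and the decomposition $s_t=\xi_t+\bar\mu_t$ gives the limits. One small simplification: the drift of $\xi_t$ is exactly $(A-BS^{-1}U)\xi_t$, and the affine $\bar\mu_t$-terms appear only in the diffusion.
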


\begin{proof}
Define the feedback coefficients $\Theta^*:=-S^{-1}U,$  $\bar\Theta^*:=-S^{-1}W,$ and $\theta^*:=-S^{-1}O$, so that the mean of the Gaussian kernel $\pi^*$ is $\hat a(s,\mu)
=
\Theta^*(s-\bar\mu)+\bar\Theta^*\,\bar\mu+\theta^*.$ Hence
\[
b^{\pi^*}(s,\mu)
=
(A+B\Theta^*)(s-\bar\mu)
+
(\tilde A+B\bar\Theta^*)\bar\mu
+
B\theta^*,
\]
where $\tilde A:=A+\bar A$. Moreover, if we set $C^*:=D+F\Theta^*,$ $\bar C^*:=D+\bar D+F\bar\Theta^*,$ and $\sigma_0^*:=\gamma+F\theta^*,$ then
\[
\Sigma^{\pi^*}(s,\mu)
=
\bigl(C^*(s-\bar\mu)+\bar C^*\,\bar\mu+\sigma_0^*\bigr)
\bigl(C^*(s-\bar\mu)+\bar C^*\,\bar\mu+\sigma_0^*\bigr)^\top
+
\frac{\lambda}{2}FS^{-1}F^\top.
\]
Taking expectation in the population dynamics yields
\begin{equation*}
\dot{\bar\mu}_t
=
(\tilde A+B\bar\Theta^*)\bar\mu_t+B\theta^*.
\end{equation*}
By Lemma \ref{lem:ARE_full_stationary},
$\tilde A_\beta+B\bar\Theta^*
=
\tilde A_\beta-BS^{-1}W$
is Hurwitz. Therefore there exist constants $C_m\ge1$ and $\kappa_m>0$ such that
\begin{equation}\label{eq:m_discounted_bound}
e^{-\beta t}|\bar\mu_t|^2
\le
C_m e^{-2\kappa_m t}\bigl(1+|m_0|^2\bigr).
\end{equation}
Next, we derive the Lyapunov inequality needed for the fluctuation
$\xi_t=s_t-\bar\mu_t$. Set $M^*:=A+B\Theta^*.$ After completing squares in \eqref{eq:ARE_shifted_K_lemma}, we have
\[
K(A_\beta+B\Theta^*)
+
(A_\beta+B\Theta^*)^\top K
+
(D+F\Theta^*)^\top K(D+F\Theta^*)
=
-
\mathcal Q_{\Theta^*},
\]
where $\mathcal Q_{\Theta^*}:=
Q
+
(\Theta^*)^\top N\Theta^*
+
(\Theta^*)^\top I
+
I^\top\Theta^*.$ Equivalently, since $A_\beta=A-\frac{\beta}{2}I_d$,
\[
(M^*)^\top K
+
K M^*
+
(C^*)^\top K C^*
=
\beta K-\mathcal Q_{\Theta^*}.
\]
Moreover, by \Cref{ass:LQR}, $\mathcal Q_{\Theta^*}
=
Q-I^\top N^{-1}I
+
(\Theta^*+N^{-1}I)^\top N(\Theta^*+N^{-1}I)
\succ 0.$ Since $K\in\mathbb S_{++}^d$, the two positive definite quadratic forms
$\mathcal Q_{\Theta^*}$ and $K$ are comparable. Hence there exists
$\kappa_\xi>0$ such that $\mathcal Q_{\Theta^*}\succeq 2\kappa_\xi K.$ Therefore,
\[
(M^*)^\top K
+
K M^*
+
(C^*)^\top K C^*
=
\beta K-\mathcal Q_{\Theta^*}
\preceq
\beta K-2\kappa_\xi K
=
(\beta-2\kappa_\xi)K.
\]
Thus, with $G:=K$, we obtain
\begin{equation}\label{eq:lyapunov_centered_pi_star}
(A+B\Theta^*)^\top G
+
G(A+B\Theta^*)
+
(D+F\Theta^*)^\top G(D+F\Theta^*)
\preceq
(\beta-2\kappa_\xi)G.
\end{equation}
We now apply It\^o's formula to the quadratic function $x\mapsto x^\top Gx$
along the fluctuation processes $\xi_t$ and $\tilde\xi_t$. Set
$h_t:=\bar C^*\bar\mu_t+\sigma_0^*.$ For $\xi_t$, the drift is $M^*\xi_t$ and
the covariance matrix is $(C^*\xi_t+h_t)(C^*\xi_t+h_t)^\top
+
\frac{\lambda}{2}FS^{-1}F^\top .$ Hence,
\[
\begin{aligned}
\frac{\d}{\d t}\mathbb E[\xi_t^\top G\xi_t]
&=
\mathbb E\!\left[
\xi_t^\top\bigl((M^*)^\top G+GM^*\bigr)\xi_t
\right]
+
\mathbb E\!\left[
(C^*\xi_t+h_t)^\top G(C^*\xi_t+h_t)
\right]
+
\frac{\lambda}{2}\Tr(GFS^{-1}F^\top)
\\
&=
\mathbb E\!\left[
\xi_t^\top\bigl((M^*)^\top G+GM^*+(C^*)^\top G C^*\bigr)\xi_t
\right]
+
2\mathbb E\!\left[\xi_t^\top (C^*)^\top G h_t\right]
\\
&\qquad
+
h_t^\top Gh_t
+
\frac{\lambda}{2}\Tr(GFS^{-1}F^\top).
\end{aligned}
\]
Using \eqref{eq:lyapunov_centered_pi_star}, we get
\[
\begin{aligned}
\frac{\d}{\d t}\mathbb E[\xi_t^\top G\xi_t]
&\le
(\beta-2\kappa_\xi)\mathbb E[\xi_t^\top G\xi_t]
+
2\mathbb E\!\left[\xi_t^\top (C^*)^\top G h_t\right]
+
h_t^\top Gh_t
+
\frac{\lambda}{2}\Tr(GFS^{-1}F^\top).
\end{aligned}
\]
The cross term is controlled by Young's inequality: for every $\varepsilon>0$, $2\xi_t^\top (C^*)^\top G h_t
\le
\varepsilon\, \xi_t^\top G\xi_t
+
C_\varepsilon |h_t|^2.$
Moreover, since $h_t=\bar C^*\bar\mu_t+\sigma_0^*$, we have
$|h_t|^2\le C(1+|\bar\mu_t|^2)$, while
$\frac{\lambda}{2}\Tr(GFS^{-1}F^\top)\le C\lambda.$ Therefore,
possibly reducing $\kappa_\xi>0$ and keeping the same notation, we obtain
\[
\frac{\d}{\d t}\mathbb E[\xi_t^\top G\xi_t]
\le
(\beta-2\kappa_\xi)\mathbb E[\xi_t^\top G\xi_t]
+
C\bigl(1+|\bar\mu_t|^2+\lambda\bigr).
\]
The same computation applied to $\tilde\xi_t$ gives
\[
\frac{\d}{\d t}\mathbb E[\tilde\xi_t^\top G\tilde\xi_t]
\le
(\beta-2\kappa_\xi)\mathbb E[\tilde\xi_t^\top G\tilde\xi_t]
+
C\bigl(1+|\bar\mu_t|^2+\lambda\bigr),
\]
for some constant $C>0$ independent of $t$. Multiplying by $e^{-\beta t}$ gives
\[
\frac{\d}{\d t}\Bigl(e^{-\beta t}\mathbb E[\xi_t^\top G\xi_t]\Bigr)
\le
-2\kappa_\xi\,e^{-\beta t}\mathbb E[\xi_t^\top G\xi_t]
+
C\,e^{-\beta t}\bigl(1+|\bar\mu_t|^2+\lambda\bigr),
\]
and the same inequality with $\tilde\xi_t$ in place of $\xi_t$.

Using \eqref{eq:m_discounted_bound} and Gr\"onwall's inequality, we deduce that there exist
constants $C\ge1$ and $\kappa>0$ such that
\[
e^{-\beta t}\mathbb E|\xi_t|^2
+
e^{-\beta t}\mathbb E|\tilde\xi_t|^2
+
e^{-\beta t}|\bar\mu_t|^2
\le
C\,e^{-2\kappa t}\Bigl(1+|s|^2+m_2(\mu)+\lambda\Bigr),
\qquad t\ge0,
\]
which is exactly \eqref{eq:discounted_second_moment_bound}. Finally, since
\[
\mathbb E|s_t|^2
\le
2\mathbb E|\xi_t|^2+2|\bar\mu_t|^2,
\qquad
m_2(\mu_t)
=
\mathbb E|\tilde s_t|^2
=
\mathbb E|\tilde\xi_t|^2+|\bar\mu_t|^2,
\]
\eqref{eq:discounted_second_moment_bound} implies that $\lim_{t\to\infty}e^{-\beta t}\bigl(\mathbb E|s_t|^2+m_2(\mu_t)\bigr) = 0$. Because the quadratic candidate satisfies
\[
|V(s,\mu)|\le C_V\bigl(1+|s|^2+m_2(\mu)\bigr),
\]
for some constant $C_V>0$, we conclude \eqref{eq:transitivity_conditon_V_pi_star}.
\end{proof}

\begin{proof}[\textbf{Proof of \Cref{thm:LQ_infinite}}]
Fix $(s,\mu)\in \mathbb R^d\times\mathcal P_2(\mathbb R^d)$ and let $\pi\in\Pi_{\mathrm{add}}$.
For $t\ge0$, define
\begin{equation}\label{eq:verification_process_LQR}
    \mathcal S_t^\pi
    :=
    e^{-\beta t}V(s_t,\mu_t)
    +
    \int_0^t e^{-\beta r}
    \Bigl(
        r(s_r,\mu_r,a_r^s)-\lambda\log p_\pi(s_r,\mu_r,a_r^s)
    \Bigr)\,\d r.
\end{equation}
For $n\in\mathbb N$, let $ \tau_n
    :=
    \inf\{
        t\ge0:\ |s_t|^2+m_2(\mu_t)\ge n
    \}.$

\textbf{Step 1: Derivatives of the candidate value function.}
Since $V$ in \eqref{eq:V_candidate_pf2} depends on $\mu$ only through $\bar\mu$, one has
\begin{equation}\label{eq:derivatives_candidate_LQR}
    \nabla_s V(s,\mu)=-2K(s-\bar\mu)-2Y,
    \qquad
    \nabla^2_{ss}V(s,\mu)=-2K,
\end{equation}
and
\begin{equation}\label{eq:measure_derivatives_candidate_LQR}
    \partial_\mu V(s,\mu)(\xi)=2K(s-\bar\mu)-2\Lambda\bar\mu,
    \qquad
    D_\xi\partial_\mu V(s,\mu)(\xi)=0.
\end{equation}

\textbf{Step 2: Localized verification identity.}
Applying It\^o's formula to
$e^{-\beta(t\wedge\tau_n)}V(s_{t\wedge\tau_n},\mu_{t\wedge\tau_n})$,
using \eqref{eq:derivatives_candidate_LQR}-\eqref{eq:measure_derivatives_candidate_LQR},
and taking expectations, we obtain
\begin{equation}\label{eq:localized_verification_identity_LQR}
\begin{aligned}
    \mathbb E\!\left[\mathcal S_{t\wedge\tau_n}^\pi\right]
    &=
    V(s,\mu)
    +
    \mathbb E\!\left[
        \int_0^{t\wedge\tau_n}
        e^{-\beta r}
        \Bigl(
            -\beta V(s_r,\mu_r)
            +
            (\mathcal L_{b,\Sigma}^{\pi}V)(s_r,\mu_r)
            +
            r_\lambda^\pi(s_r,\mu_r)
        \Bigr)\,\d r
    \right].
\end{aligned}
\end{equation}

\textbf{Step 3: Static entropy minimization.}
For $\eta\in\mathbb R^m$ and for every probability density $p$ on $\mathbb R^m$ such that
$\int_{\mathbb R^m}(|a|^2+|\log p(a)|)\,p(a)\,\d a<\infty$, define
\begin{equation}\label{eq:static_entropy_functional_LQR}
    \mathfrak J_\eta(p)
    :=
    \int_{\mathbb R^m}
    \Bigl(
        a^\top S a+2a^\top \eta+\lambda\log p(a)
    \Bigr)p(a)\,\d a.
\end{equation}
Note that the first two terms inside the integral can be rewritten as $a^\top S a+2a^\top \eta
=
\bigl(a+S^{-1}\eta\bigr)^\top S\bigl(a+S^{-1}\eta\bigr)
-
\eta^\top S^{-1}\eta$. Hence, defining 
\begin{equation*}
    g_\eta(a)
    :=
    \frac{\sqrt{\det S}}{(\pi_0\lambda)^{m/2}}
    \exp\!\left(
        -\frac1\lambda
        \bigl(a+S^{-1}\eta\bigr)^\top S\bigl(a+S^{-1}\eta\bigr)
    \right).
\end{equation*}
With this definition, \eqref{eq:static_entropy_functional_LQR} becomes
\begin{equation}\label{eq:KL_identity_LQR}
    \mathfrak J_\eta(p)
    =
    -
    \eta^\top S^{-1}\eta
    -
    \frac{\lambda}{2}\Bigl(m\log(\pi_0\lambda)-\log\det S\Bigr)
    +
    \lambda\,\mathrm{KL}(p\,\|\,g_\eta).
\end{equation}
In particular, $\mathfrak J_\eta(p)$ is minimized uniquely at $p=g_\eta$.

\textbf{Step 4: Expansion of the drift term.}
Set $z:=s-\bar\mu$ and $ \eta(s,\mu):=Us+(W-U)\bar\mu+O.$ Using \eqref{eq:general_coeff_LQR}, \eqref{eq:LQ_running_reward},
\eqref{eq:derivatives_candidate_LQR}-\eqref{eq:measure_derivatives_candidate_LQR},
and the definitions \eqref{eq:tilde_notation}-\eqref{eq:tilde_notation_2},
a direct computation yields
\begin{equation*}
\begin{aligned}
    -\beta V(s,\mu)
    +&
    (\mathcal L_{b,\Sigma}^{\pi}V)(s,\mu)
    +
    r_\lambda^\pi(s,\mu) =
    -z^\top\Bigl(Q-\beta K+KA+A^\top K+D^\top K D\Bigr)z \\
    &\quad
    -\bar\mu^\top\Bigl(Q+\bar Q-\beta\Lambda+\Lambda(A+\bar A)+(A+\bar A)^\top\Lambda
    +(D+\bar D)^\top K(D+\bar D)\Bigr)\bar\mu \\
    &\quad
    -2\Bigl(M+(A^\top-\beta I_d)Y+D^\top K\gamma\Bigr)^\top s
    -\Bigl(-\beta R+\gamma^\top K\gamma\Bigr) \\
    &\quad
    -
    \int_{\mathbb R^m}
    \Bigl(
        a^\top S a
        +
        2a^\top \eta(s,\mu)
        +
        \lambda\log p_\pi(s,\mu,a)
    \Bigr)\pi(\d a\mid s,\mu).
\end{aligned}
\end{equation*}
Applying \eqref{eq:KL_identity_LQR} with $\eta=\eta(s,\mu)$ and $p=p_\pi(s,\mu,\cdot)$, we obtain
\begin{equation*}
\begin{aligned}
    &-\beta V(s,\mu)
    +
    (\mathcal L_{b,\Sigma}^{\pi}V)(s,\mu)
    +
    r_\lambda^\pi(s,\mu)=
    -z^\top\Gamma_K z
    -\bar\mu^\top\Gamma_\Lambda\bar\mu
    -2\Gamma_Y^\top s
    -\Gamma_R
    -\lambda\,\mathrm{KL}\!\left(p_\pi(s,\mu,\cdot)\,\|\,g_{s,\mu}\right),
\end{aligned}
\end{equation*}
where $g_{s,\mu}:=g_{\eta(s,\mu)}$ and
\[
\Gamma_K
=
Q-\beta K+KA+A^\top K+D^\top K D-U^\top S^{-1}U,
\]
\[
\Gamma_\Lambda
=
Q+\bar Q-\beta\Lambda+\Lambda(A+\bar A)+(A+\bar A)^\top\Lambda
+(D+\bar D)^\top K(D+\bar D)-W^\top S^{-1}W,
\]
\[
\Gamma_Y
=
M+(A^\top-\beta I_d)Y+D^\top K\gamma-U^\top S^{-1}O,
\]
and
\[
\Gamma_R
=
-\beta R+\gamma^\top K\gamma-O^\top S^{-1}O
-\frac{\lambda}{2}\Bigl(m\log(\pi_0\lambda)-\log\det S\Bigr).
\]
By \eqref{eq:ARE_P}-\eqref{eq:ARE_r}, all these coefficients vanish. Hence
\begin{equation}\label{eq:drift_KL_only_LQR}
    -\beta V(s,\mu)
    +
    (\mathcal L_{b,\Sigma}^{\pi}V)(s,\mu)
    +
    r_\lambda^\pi(s,\mu)
    =
    -\lambda\,\mathrm{KL}\!\left(p_\pi(s,\mu,\cdot)\,\|\,g_{s,\mu}\right).
\end{equation}

\textbf{Step 5: Comparison for an arbitrary policy.}
Substituting \eqref{eq:drift_KL_only_LQR} into \eqref{eq:localized_verification_identity_LQR} yields
\begin{equation}\label{eq:localized_supermartingale_identity_LQR}
    \mathbb E\!\left[\mathcal S_{t\wedge\tau_n}^\pi\right]
    =
    V(s,\mu)
    -
    \lambda\,
    \mathbb E\!\left[
        \int_0^{t\wedge\tau_n}
        e^{-\beta r}
        \mathrm{KL}\!\left(
            p_\pi(s_r,\mu_r,\cdot)\,\|\,g_{s_r,\mu_r}
        \right)\,\d r
    \right]
    \le
    V(s,\mu).
\end{equation}
Since $V$ has quadratic growth and, by definition of $\Pi_{\mathrm{add}}$, the discounted second moments vanish at infinity, we may let $n\to\infty$ in \eqref{eq:localized_supermartingale_identity_LQR} and obtain, for every $t\ge0$,
\begin{equation}\label{eq:unstopped_supermartingale_identity_LQR}
    \mathbb E\!\left[\mathcal S_t^\pi\right]
    =
    V(s,\mu)
    -
    \lambda\,
    \mathbb E\!\left[
        \int_0^t
        e^{-\beta r}
        \mathrm{KL}\!\left(
            p_\pi(s_r,\mu_r,\cdot)\,\|\,g_{s_r,\mu_r}
        \right)\,\d r
    \right]
    \le
    V(s,\mu).
\end{equation}
Recalling \eqref{eq:verification_process_LQR}, this implies
\begin{equation}\label{eq:prelimit_verification_LQR}
\begin{aligned}
    &\mathbb E\!\left[e^{-\beta t}V(s_t,\mu_t)\right] 
    +
    \mathbb E^{s,\mu,\pi}\!\left[
        \int_0^t e^{-\beta r}
        \Bigl(
            r(s_r,\mu_r,a_r^s)-\lambda\log p_\pi(s_r,\mu_r,a_r^s)
        \Bigr)\,\d r
    \right]
    \le
    V(s,\mu).
\end{aligned}
\end{equation}
Letting $t\to\infty$ in \eqref{eq:prelimit_verification_LQR} and using the defining property of $\Pi_{\mathrm{add}}$, namely
\[
\lim_{t\to\infty}e^{-\beta t}\Bigl(\mathbb E|s_t|^2+m_2(\mu_t)\Bigr)=0,
\]
together with the quadratic growth of $V$, we conclude that
\begin{equation}\label{eq:comparison_result_LQR}
    V^\pi(s,\mu)\le V(s,\mu)
    \qquad\text{for every }\pi\in\Pi_{\mathrm{add}}.
\end{equation}
Hence $V^*(s,\mu)\le V(s,\mu)$.

\textbf{Step 6: Optimality of the candidate policy.}
By construction, the Gaussian kernel \eqref{eq:pi_star} has density $p_{\pi^*}(s,\mu,\cdot)=g_{s,\mu},$ so the Kullback-Leibler term in \eqref{eq:drift_KL_only_LQR} vanishes identically for $\pi=\pi^*$.
Moreover, Lemma \ref{lem:transversality_pi_star} shows that $\pi^*\in\Pi_{\mathrm{add}}$.
Therefore \eqref{eq:unstopped_supermartingale_identity_LQR} becomes
\[
\mathbb E\!\left[\mathcal S_t^{\pi^*}\right]=V(s,\mu),
\qquad t\ge0.
\]
Letting $t\to\infty$ and using Lemma \ref{lem:transversality_pi_star} again, we obtain $V^{\pi^*}(s,\mu)=V(s,\mu).$ Combining this with \eqref{eq:comparison_result_LQR}, we conclude that $V^*(s,\mu)=V^{\pi^*}(s,\mu)=V(s,\mu).$

\textbf{Step 7: Uniqueness of the optimal policy.}
Let $\hat\pi\in\Pi_{\mathrm{add}}$ be another optimal stationary feedback policy. Then
$V^{\hat\pi}(s,\mu)=V(s,\mu)$ for every $(s,\mu)$, and \eqref{eq:unstopped_supermartingale_identity_LQR} implies
\[
\mathbb E\!\left[
    \int_0^\infty
    e^{-\beta r}
    \mathrm{KL}\!\left(
        p_{\hat\pi}(s_r,\mu_r,\cdot)\,\|\,g_{s_r,\mu_r}
    \right)\,\d r
\right]
=0.
\]
Since the integrand is nonnegative, it follows that
\[
\mathrm{KL}\!\left(
    p_{\hat\pi}(s_r,\mu_r,\cdot)\,\|\,g_{s_r,\mu_r}
\right)=0
\qquad
\text{for almost every }r\ge0,\ \text{almost surely.}
\]
Hence $p_{\hat\pi}(s_r,\mu_r,\cdot)=g_{s_r,\mu_r}$ for almost every $r\ge0$, almost surely. Since the initial condition $(s,\mu)$ is arbitrary and the minimizer in \eqref{eq:KL_identity_LQR} is unique, we conclude that $\hat\pi=\pi^*$ as stationary feedback kernels. Therefore $\pi^*$ is the unique optimal stationary feedback policy in $\Pi_{\mathrm{add}}$.
\end{proof}

    \bibliographystyle{plain} 
    \bibliography{biblio.bib} 

\end{document}